\documentclass[preprint,10pt]{elsarticle}




\usepackage{amssymb}
\usepackage{amsmath}
\usepackage{amsthm}

\usepackage{lineno}

\usepackage[latin1]{inputenc}
\usepackage[T1]{fontenc}
\usepackage{xcolor}
\usepackage{color}
\usepackage{bm}
\usepackage{bbm}
\usepackage{tikz,circuitikz}
\usepackage{soul}
\usepackage{multirow}

\usepackage{subcaption}
\usepackage{graphicx}
\usetikzlibrary{shapes.geometric, arrows.meta, positioning}
\usepackage{float}

\usepackage[colorlinks=true, linkcolor=blue, citecolor=green, urlcolor=cyan]{hyperref}


\theoremstyle{definition}
\newtheorem{definition}{Definition}[section]
\newtheorem{proposition}{Proposition}[section]
\newtheorem{lemma}{Lemma}[section]
\newtheorem{theorem}{Theorem}[section]
\newtheorem{corollary}{Corollary}[section]

\newcommand{\E}{\mathbb{E}}
\renewcommand{\P}{\mathbb{P}}
\newcommand{\R}{\mathbb{R}}

\newcommand{\I}[1]{ \mathbf{1}_{\left\{ #1 \right\}} }
\newcommand{\Q}[2]{F_{#1, \, #2}}
\newcommand{\QQ}[2]{F_{[0, #1], \, #2}}
\newcommand{\QQQ}[3]{F_{[0, #1], \, [#2,#3]}}
\newcommand{\mini}[2]{#1 \wedge #2}

\newcommand{\Nd}{N^{disc}}
\newcommand{\Nc}{N}
\newcommand{\sys}{_{sys}}
\newcommand{\Tsys}{T_{fail}}
\newcommand{\Trep}{T_{rep}}

\renewcommand{\r}{^{(r)}}
\newcommand{\csys}{c_{sys}}
\newcommand{\ccmp}{c_{cmp}}

\begin{document}


\begin{frontmatter}



\title{Simple repair policies and decompositions for semi-coherent systems with simultaneous failures}


\author[org:uai]{Guido Lagos\corref{cor1}}
\ead{guido.lagos@uai.cl}\cortext[cor1]{Corresponding author}
\affiliation[org:uai]{organization={Universidad Adolfo Ib\'a\~nez}, city={Vi\~na del Mar}, country={Chile}}
\author[org:um]{Jorge Navarro}
\ead{jorgenav@um.es}
\affiliation[org:um]{organization={Universidad de Murcia}, city={Murcia}, country={Spain}}
\author[org:uv]{H\'ector Olivero}
\ead{hector.olivero@uv.cl}
\affiliation[org:uv]{organization={Universidad de Valpara\'iso}, city={Valpara\'iso}, country={Chile}}

\begin{abstract}

We consider semi-coherent binary systems that are subject to simultaneous failures of its components. These are systems whose components can be either working or failed; the system can also be working or failed depending on the state of the components; and repairing a component cannot cause the system to fail. We consider that one or more components can fail simultaneously, allowing us to model external shocks and disasters. For this, we use the Lévy-frailty Marshall-Olkin (LFMO) multivariate distribution to model the failure times of the components. We aim to answer in which states of the system we should repair the components. This is a challenging question, as the number of repair policies grows super-exponentially in the number of components. To tackle this, we propose a simple family of repair policies, which we call \emph{$r$-out-of-$n$:R repair policies}, where one repairs all failed components when the system fails or when there are $r$ or more failed components. Our main contribution is that we derive exact and simple expressions for key performance-evaluation quantities of the system operating under our proposed repair policies. That is, we give explicit expressions for the mean time-to-failure of the system, mean time-to-repair, probability of system-failure before repair, and component- and system-repair rate. We also give expressions for the expected cost and long-term average cost, when there are components' and system repair cost. The only relevant parameters involved in the derived expressions are the \emph{structural signature} of the system, and the Laplace exponent associated to the LFMO distribution.
\end{abstract}

\begin{keyword}
L\'evy-frailty Marshall-Olkin \sep System Signature \sep Simultaneous failures \sep repair policies



\end{keyword}

\end{frontmatter}



\newpage

\section{Introduction}

In this paper we develop exact expressions for a class of repair policies when a binary system is subject to simultaneous failures of its components.
We do this for \emph{binary systems} with \emph{semi-coherent structure}, that is, systems whose components can be either working or failed, and where repairing (respectively, breaking down) a component cannot break down (respectively, repair) the system; see Figure~\ref{fig:basic} for a basic example.
To make the model more realistic, we assume that one or more components can fail simultaneously, for example, due to an external shock that takes down several components at once.
We model this using the \emph{L\'evy-frailty Marshall-Olkin} (LFMO) distribution for the times of failure of components.
This is a Markovian model that considers a degradation process common to all components, and an individual tolerance to degradation for each component.
In this setting, we consider repair policies where we either repair all failed components, or continue operating with the system as-is.

The main question we tackle is \emph{in which states of the system should we repair the failed components?} 
This is a particularly challenging question, as there is an \emph{exponential} explosion of complexity with the number $n$ of components of the system: as we illustrate in the basic example in Figure~\ref{fig:basic}, when the system has $n$ components, there are $2^n$ states of the components of the system, $3^n-2^n$ possible transitions due to simultaneous failures, and $2^{2^n}$ possible repair policies\footnote{In rigor, there are $2^{2^n-1-N_\text{sys-failed}}$ policies, where $N_\text{sys-failed}$ is the number of states where the system is failed; since we cannot repair components when none has failed, and it makes no sense to continue working with a system that has already failed, i.e., $2^n-1-N_\text{sys-failed}$ is the number of states where there are failed components but the system continues working.}.

To tackle this challenge, we consider a family of simple repair policies, that we call \emph{$r$-out-of-$n$:R repair policies}, where for a given $r$, we repair all failed components when the system fails or when there are $r$ or more failed components; see Figure~\ref{fig:basic}. In this way, the policy for $r=1$ corresponds to repairing any component as soon as it fails, and the policy for $r=n$, where $n$ is the number of components, corresponds to repairing only when the system fails.

The main result of this paper is that we show explicit expressions for key performance evaluation quantities of the system operating under our proposed repair policies. That is, we give explicit expressions for the mean time-to-failure of the system, mean time-to-repair, probability of system-failure before repair, component- and system-repair rate, to name a few. We also give explicit formulas for the long-term mean cost when there is a cost for repairing components, and repairing a failed system incurs an additional system-repair cost due to operational interruption.
To the best of the authors' knowledge, this is the first result of this type for semi-coherent systems and for simultaneous failures.

\begin{figure}[h]
\centering
\resizebox{.45\linewidth}{!}{%
\begin{tikzpicture}[
    component/.style={draw, circle, inner sep=1pt, minimum size=0.6cm},
    system/.style={fill=green!10, rounded corners=5mm, minimum width=2.5cm, minimum height=2.5cm, align=center},
    failed_system/.style={fill=red!10, rounded corners=5mm, minimum width=2.5cm, minimum height=2.5cm, align=center},
    cross out/.pic={
        \draw[red, very thick] (-0.3, -0.3) -- (0.3, 0.3);
        \draw[red, very thick] (-0.3, 0.3) -- (0.3, -0.3);
    }
]

\node[system] (sys111) at (0, -4) {};
\node[component] (c1_1) at (sys111.north) [xshift=-0.5cm, yshift=-0.5cm] {1};
\node[component] (c1_2) at (sys111.north) [xshift= 0.5cm, yshift=-0.5cm] {2};
\node[component] (c1_3) at (sys111.south) [yshift= 0.5cm] {3};
\draw (c1_1.east) -- (c1_2.west);
\draw (sys111.east) -| +(-.3,0) |- (c1_2.east);
\draw (sys111.east) -| +(-.3,0) |- (c1_3.east);
\draw (sys111.west) -| +(.3,0) |- (c1_3.west);
\draw (sys111.west) -| +(.3,0) |- (c1_1.west);

\node[system] (sys011) at (4, 0) {};
\node[component] (c2_1) at (sys011.north) [xshift=-0.5cm, yshift=-0.5cm] {1};
\node[component] (c2_2) at (sys011.north) [xshift= 0.5cm, yshift=-0.5cm] {2};
\node[component] (c2_3) at (sys011.south) [yshift= 0.5cm] {3};
\draw (c2_1.east) -- (c2_2.west);
\draw (sys011.east) -| +(-.3,0) |- (c2_2.east);
\draw (sys011.east) -| +(-.3,0) |- (c2_3.east);
\draw (sys011.west) -| +( .3,0) |- (c2_3.west);
\draw (sys011.west) -| +( .3,0) |- (c2_1.west);
\pic at (c2_1) {cross out};

\node[failed_system] (sys100) at (8, -0) {};
\node[component] (c9_1) at (sys100.north) [xshift=-0.5cm, yshift=-0.5cm] {1};
\node[component] (c9_2) at (sys100.north) [xshift= 0.5cm, yshift=-0.5cm] {2};
\node[component] (c9_3) at (sys100.south) [yshift= 0.5cm] {3};
\draw (c9_1.east) -- (c9_2.west);
\draw (sys100.east) -| +(-.3,0) |- (c9_2.east);
\draw (sys100.east) -| +(-.3,0) |- (c9_3.east);
\draw (sys100.west) -| +( .3,0) |- (c9_3.west);
\draw (sys100.west) -| +( .3,0) |- (c9_1.west);
\pic at (c9_2) {cross out};
\pic at (c9_3) {cross out};

\node[failed_system] (sys000) at (12, -4) {};
\node[component] (c4_1) at (sys000.north) [xshift=-0.5cm, yshift=-0.5cm] {1};
\node[component] (c4_2) at (sys000.north) [xshift= 0.5cm, yshift=-0.5cm] {2};
\node[component] (c4_3) at (sys000.south) [yshift= 0.5cm] {3};
\draw (c4_1.east) -- (c4_2.west);
\draw (sys000.east) -| +(-.3,0) |- (c4_2.east);
\draw (sys000.east) -| +(-.3,0) |- (c4_3.east);
\draw (sys000.west) -| +( .3,0) |- (c4_3.west);
\draw (sys000.west) -| +( .3,0) |- (c4_1.west);
\pic at (c4_1) {cross out};
\pic at (c4_2) {cross out};
\pic at (c4_3) {cross out};

\node[system] (sys101) at (4, -4) {};
\node[component] (c5_1) at (sys101.north) [xshift=-0.5cm, yshift=-0.5cm] {1};
\node[component] (c5_2) at (sys101.north) [xshift= 0.5cm, yshift=-0.5cm] {2};
\node[component] (c5_3) at (sys101.south) [yshift= 0.5cm] {3};
\draw (c5_1.east) -- (c5_2.west);
\draw (sys101.east) -| +(-.3,0) |- (c5_2.east);
\draw (sys101.east) -| +(-.3,0) |- (c5_3.east);
\draw (sys101.west) -| +( .3,0) |- (c5_3.west);
\draw (sys101.west) -| +( .3,0) |- (c5_1.west);
\pic at (c5_2) {cross out};

\node[failed_system] (sys010) at (8, -4) {};
\node[component] (c6_1) at (sys010.north) [xshift=-0.5cm, yshift=-0.5cm] {1};
\node[component] (c6_2) at (sys010.north) [xshift= 0.5cm, yshift=-0.5cm] {2};
\node[component] (c6_3) at (sys010.south) [yshift= 0.5cm] {3};
\draw (c6_1.east) -- (c6_2.west);
\draw (sys010.east) -| +(-.3,0) |- (c6_2.east);
\draw (sys010.east) -| +(-.3,0) |- (c6_3.east);
\draw (sys010.west) -| +( .3,0) |- (c6_3.west);
\draw (sys010.west) -| +( .3,0) |- (c6_1.west);
\pic at (c6_1) {cross out};
\pic at (c6_3) {cross out};

\node[system] (sys001) at (8, -8) {};
\node[component] (c7_1) at (sys001.north) [xshift=-0.5cm, yshift=-0.5cm] {1};
\node[component] (c7_2) at (sys001.north) [xshift= 0.5cm, yshift=-0.5cm] {2};
\node[component] (c7_3) at (sys001.south) [yshift= 0.5cm] {3};
\draw (c7_1.east) -- (c7_2.west);
\draw (sys001.east) -| +(-.3,0) |- (c7_2.east);
\draw (sys001.east) -| +(-.3,0) |- (c7_3.east);
\draw (sys001.west) -| +( .3,0) |- (c7_3.west);
\draw (sys001.west) -| +( .3,0) |- (c7_1.west);
\pic at (c7_1) {cross out};
\pic at (c7_2) {cross out};

\node[system] (sys110) at (4, -8) {};
\node[component] (c8_1) at (sys110.north) [xshift=-0.5cm, yshift=-0.5cm] {1};
\node[component] (c8_2) at (sys110.north) [xshift= 0.5cm, yshift=-0.5cm] {2};
\node[component] (c8_3) at (sys110.south) [yshift= 0.5cm] {3};
\draw (c8_1.east) -- (c8_2.west);
\draw (sys110.east) -| +(-.3,0) |- (c8_2.east);
\draw (sys110.east) -| +(-.3,0) |- (c8_3.east);
\draw (sys110.west) -| +( .3,0) |- (c8_3.west);
\draw (sys110.west) -| +( .3,0) |- (c8_1.west);
\pic at (c8_3) {cross out};


\draw[->, magenta] (sys111) .. controls +(2,2) and +(-2,-2) .. (sys011);
\draw[->, magenta] (sys111) .. controls +(2,-.3) and +(-2,.5) .. (sys101);
\draw[->, magenta] (sys111) .. controls +(2,-2) and +(-2,2) .. (sys110);
\draw[->, magenta] (sys111) .. controls +(2,1.7) and +(-2,-2) .. (sys100);
\draw[->, magenta] (sys111) .. controls +(2,-.6) and +(-2,.5) .. (sys010);
\draw[->, magenta] (sys111) .. controls +(2,-1.7) and +(-2,2) .. (sys001);
\draw[->, magenta] (sys111) .. controls +(2,-.9) and +(-2,.5) .. (sys000);

\draw[->, magenta] (sys011) .. controls +(2,-0.5) and +(-2,2) .. (sys000);
\draw[->, magenta] (sys011) .. controls +(2,-1.0) and +(-2,2) .. (sys010);
\draw[->, magenta] (sys011) .. controls +(2,-1.5) and +(-2,2) .. (sys001);

\draw[->, magenta] (sys110) .. controls +(2,1.5) and +(-2,-2) .. (sys100);
\draw[->, magenta] (sys110) .. controls +(2,1.0) and +(-2,-2) .. (sys010);
\draw[->, magenta] (sys110) .. controls +(2,0.5) and +(-2,-2) .. (sys000);

\draw[->, magenta] (sys101) .. controls +(2,1.5) and +(-2,-2) .. (sys100);
\draw[->, magenta] (sys101) .. controls +(2,-1.5) and +(-2,2) .. (sys001);
\draw[->, magenta] (sys101) .. controls +(2,-0.8) and +(-2,0.5) .. (sys000);

\draw[->, magenta] (sys100) .. controls +(2,-2) and +(-2,2) .. (sys000);
\draw[->, magenta] (sys010) .. controls +(2,-.5) and +(-2,.5) .. (sys000);
\draw[->, magenta] (sys001) .. controls +(2,2) and +(-2,-2) .. (sys000);

\end{tikzpicture}
}~\resizebox{.05\linewidth}{!}{\ }~\resizebox{.45\linewidth}{!}{%
\begin{tikzpicture}[
    component/.style={draw, circle, inner sep=1pt, minimum size=0.6cm},
    system/.style={fill=green!10, rounded corners=5mm, minimum width=2.5cm, minimum height=2.5cm, align=center},
    failed_system/.style={fill=red!10, rounded corners=5mm, minimum width=2.5cm, minimum height=2.5cm, align=center},
    cross out/.pic={
        \draw[red, very thick] (-0.3, -0.3) -- (0.3, 0.3);
        \draw[red, very thick] (-0.3, 0.3) -- (0.3, -0.3);
    }
]

\node[system] (sys111) at (0, -4) {};
\node[component] (c1_1) at (sys111.north) [xshift=-0.5cm, yshift=-0.5cm] {1};
\node[component] (c1_2) at (sys111.north) [xshift= 0.5cm, yshift=-0.5cm] {2};
\node[component] (c1_3) at (sys111.south) [yshift= 0.5cm] {3};
\draw (c1_1.east) -- (c1_2.west);
\draw (sys111.east) -| +(-.3,0) |- (c1_2.east);
\draw (sys111.east) -| +(-.3,0) |- (c1_3.east);
\draw (sys111.west) -| +(.3,0) |- (c1_3.west);
\draw (sys111.west) -| +(.3,0) |- (c1_1.west);

\node[system] (sys011) at (4, 0) {};
\node[component] (c2_1) at (sys011.north) [xshift=-0.5cm, yshift=-0.5cm] {1};
\node[component] (c2_2) at (sys011.north) [xshift= 0.5cm, yshift=-0.5cm] {2};
\node[component] (c2_3) at (sys011.south) [yshift= 0.5cm] {3};
\draw (c2_1.east) -- (c2_2.west);
\draw (sys011.east) -| +(-.3,0) |- (c2_2.east);
\draw (sys011.east) -| +(-.3,0) |- (c2_3.east);
\draw (sys011.west) -| +( .3,0) |- (c2_3.west);
\draw (sys011.west) -| +( .3,0) |- (c2_1.west);
\pic at (c2_1) {cross out};

\node[failed_system] (sys100) at (8, -0) {};
\node[component] (c9_1) at (sys100.north) [xshift=-0.5cm, yshift=-0.5cm] {1};
\node[component] (c9_2) at (sys100.north) [xshift= 0.5cm, yshift=-0.5cm] {2};
\node[component] (c9_3) at (sys100.south) [yshift= 0.5cm] {3};
\draw (c9_1.east) -- (c9_2.west);
\draw (sys100.east) -| +(-.3,0) |- (c9_2.east);
\draw (sys100.east) -| +(-.3,0) |- (c9_3.east);
\draw (sys100.west) -| +( .3,0) |- (c9_3.west);
\draw (sys100.west) -| +( .3,0) |- (c9_1.west);
\pic at (c9_2) {cross out};
\pic at (c9_3) {cross out};

\node[failed_system] (sys000) at (12, -4) {};
\node[component] (c4_1) at (sys000.north) [xshift=-0.5cm, yshift=-0.5cm] {1};
\node[component] (c4_2) at (sys000.north) [xshift= 0.5cm, yshift=-0.5cm] {2};
\node[component] (c4_3) at (sys000.south) [yshift= 0.5cm] {3};
\draw (c4_1.east) -- (c4_2.west);
\draw (sys000.east) -| +(-.3,0) |- (c4_2.east);
\draw (sys000.east) -| +(-.3,0) |- (c4_3.east);
\draw (sys000.west) -| +( .3,0) |- (c4_3.west);
\draw (sys000.west) -| +( .3,0) |- (c4_1.west);
\pic at (c4_1) {cross out};
\pic at (c4_2) {cross out};
\pic at (c4_3) {cross out};

\node[system] (sys101) at (4, -4) {};
\node[component] (c5_1) at (sys101.north) [xshift=-0.5cm, yshift=-0.5cm] {1};
\node[component] (c5_2) at (sys101.north) [xshift= 0.5cm, yshift=-0.5cm] {2};
\node[component] (c5_3) at (sys101.south) [yshift= 0.5cm] {3};
\draw (c5_1.east) -- (c5_2.west);
\draw (sys101.east) -| +(-.3,0) |- (c5_2.east);
\draw (sys101.east) -| +(-.3,0) |- (c5_3.east);
\draw (sys101.west) -| +( .3,0) |- (c5_3.west);
\draw (sys101.west) -| +( .3,0) |- (c5_1.west);
\pic at (c5_2) {cross out};

\node[failed_system] (sys010) at (8, -4) {};
\node[component] (c6_1) at (sys010.north) [xshift=-0.5cm, yshift=-0.5cm] {1};
\node[component] (c6_2) at (sys010.north) [xshift= 0.5cm, yshift=-0.5cm] {2};
\node[component] (c6_3) at (sys010.south) [yshift= 0.5cm] {3};
\draw (c6_1.east) -- (c6_2.west);
\draw (sys010.east) -| +(-.3,0) |- (c6_2.east);
\draw (sys010.east) -| +(-.3,0) |- (c6_3.east);
\draw (sys010.west) -| +( .3,0) |- (c6_3.west);
\draw (sys010.west) -| +( .3,0) |- (c6_1.west);
\pic at (c6_1) {cross out};
\pic at (c6_3) {cross out};

\node[system] (sys001) at (8, -8) {};
\node[component] (c7_1) at (sys001.north) [xshift=-0.5cm, yshift=-0.5cm] {1};
\node[component] (c7_2) at (sys001.north) [xshift= 0.5cm, yshift=-0.5cm] {2};
\node[component] (c7_3) at (sys001.south) [yshift= 0.5cm] {3};
\draw (c7_1.east) -- (c7_2.west);
\draw (sys001.east) -| +(-.3,0) |- (c7_2.east);
\draw (sys001.east) -| +(-.3,0) |- (c7_3.east);
\draw (sys001.west) -| +( .3,0) |- (c7_3.west);
\draw (sys001.west) -| +( .3,0) |- (c7_1.west);
\pic at (c7_1) {cross out};
\pic at (c7_2) {cross out};

\node[system] (sys110) at (4, -8) {};
\node[component] (c8_1) at (sys110.north) [xshift=-0.5cm, yshift=-0.5cm] {1};
\node[component] (c8_2) at (sys110.north) [xshift= 0.5cm, yshift=-0.5cm] {2};
\node[component] (c8_3) at (sys110.south) [yshift= 0.5cm] {3};
\draw (c8_1.east) -- (c8_2.west);
\draw (sys110.east) -| +(-.3,0) |- (c8_2.east);
\draw (sys110.east) -| +(-.3,0) |- (c8_3.east);
\draw (sys110.west) -| +( .3,0) |- (c8_3.west);
\draw (sys110.west) -| +( .3,0) |- (c8_1.west);
\pic at (c8_3) {cross out};

\draw [blue, thick, rounded corners=1cm, dotted] (2,2) -- (10,2) -- (13.9,-2) -- (13.9,-6) -- (10, -9.5) -- (2, -9.5) -- cycle;
\draw [blue, rounded corners=1cm, densely dashed] (6.3, 1.8) -- (10, 1.8) -- (13.7,-2) -- (13.7,-6) -- (10,-9.4) -- (6.3, -9.4) -- cycle;
\draw [blue, rounded corners=1cm, densely dashdotted] (6.5, 1.6) -- (10, 1.6) -- (13.5,-2) -- (13.5,-5.5) -- (6.5, -5.5) -- cycle;

\end{tikzpicture}
}%
\caption{
(Left) For a binary system with $n=3$ components, we show all possible combinations of working and failed components (crossed in red). The system works when there is a path of working components from left to right, and the green or red background shows if the system is working or not in that configuration. The magenta lines are all possible transitions due to failures; note that more than one component can fail at the same time. The question we aim to tackle is \emph{in which states should we repair all failed components?} Crucially, there is an exponential explosion of complexity with the number $n$ of components in the system: there are $2^n=8$ states, $3^n-2^n = 19$ transitions, and $2^{2^n-1-3}=16$ possible repair policies.
(Right) We propose simple repair policies that we denote $r$-out-of-$n$:R repair policies (\emph{repair all failed components when there are $r$ or more failed components or the system fails}). In this case, there are $n=3$ of these policies: $r=1$ in dots, $r=2$ in dashes, and $r=3$ in dash-dots. The boundary indicates the states where, upon reaching them, all failed components should be repaired.
}
\label{fig:basic}
\end{figure}
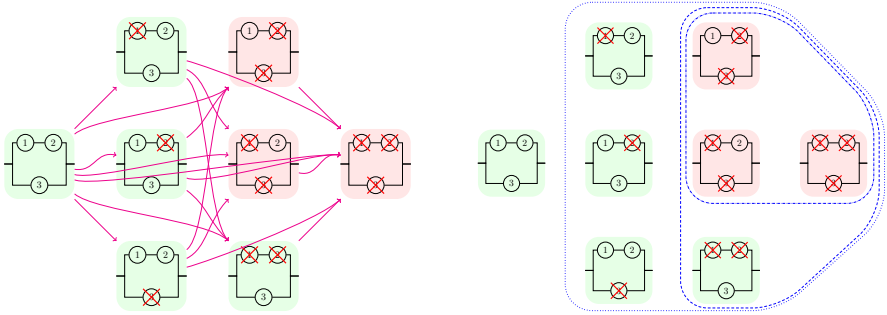

Our result is heavily based on the fact that, in this particular setting, we can easily analyze our policies using only the Markov chain of the number of failed components, and decompose the structure of the system into much simpler $k$-out-of-$n$:F systems---i.e., systems that fail when $k$ or more of its components fail. Indeed, the LFMO distribution allows for easy analysis of the Markov chain of number of failures, and also compute its rates and probabilities; see Figure~\ref{fig:number}.
In turn, the Samaniego decomposition result states that, from a probabilistic perspective, the system can be seen as a probabilistic mixture of $k$-out-of-$n$:F systems, using the so-called structural signature of the system that summarizes the structure of the system; see Figures~\ref{fig:signature} and~\ref{fig:samaniego}.
Furthermore, our proposed $r$-out-of-$n$:R policies can exploit the same state aggregation of $k$-out-of-$n$:F systems.

\paragraph{Main contributions}

The main contributions of this work are the following:

\begin{enumerate}
    \item We propose simple repair policies, that we call $r$-out-of-$n$:R repair policies, and show explicit mathematical formulas for key performance indicators (e.g., mean time-to-failure, probability of failure before repair, etc.)~of general semi-coherent systems operating under these policies, when it is subject to simultaneous failures of its components. We do this using the LFMO distribution for the failure times of the components. Our formulas involve only the structural signature of the system and the Laplace exponent of the underlying L\'evy subordinator.
    As far as the authors know, this is the first time that explicit expressions have been derived for general semi-coherent systems with simultaneous failures.

    \item As particular cases, we also obtain explicit expressions in two cases of interest in the literature: the case of $k$-out-of-$n$:F systems with LFMO distributed failure times of its components (see Proposition~\ref{corollary}); and the case of general semi-coherent systems with iid exponentially distributed failure times of its components (see Corollary~\ref{corol:iid}).

    \item We define the \emph{process signature} of the system as the probability distribution of the number of failed components when the system fails. We derive explicit expressions in the setting of semi-coherent systems with LFMO distributed components' lifetimes. We do this wheren there is no repair policy (see Proposition~\ref{proposition1}) and when an $r$-out-of-$n$:R policy is in operation (see Theorem~\ref{theo}).
    

    \item From a methodology perspective, our analysis extends the classical decomposition result of Samaniego~\cite{samaniego1985closure,navarro2008application,marichal2011signature}, to decompose further quantities of interest, such as the number of failed components, the times of repair, and costs; see Section~\ref{sec:signature}.
    This is an important development, as the Samaniego result is a mainstay in the probabilistic analysis of system reliability, which, in part, has greatly motivated the study of systems' signatures.
    In addition, our results are heavily based on the exchangeability property of the LFMO distribution, so our work opens new research venues to more general exchangeable distributions.
\end{enumerate}

\subsection{Literature review}

The formal study of binary reliability systems started in the late 1950s, see~\cite{black1959optimal,birnbaum1961multi}, with its study driven by studying structure functions, redundancy, and properties such as monotonicity and coherency; see e.g.~\cite{barlow1996mathematical} and the recent survey~\cite{aven2025fifty}. An important development was the definition and study of the signature of a system in~\cite{samaniego1985closure}, which allows to analyze mean failure times, aging properties, and residual lifetimes, see the survey~\cite{naqvi2022system}, and more recently in~\cite{barrera2020limit,barrera2020approximating,lagos2024limiting}, to approximate the behavior of either large times or for large systems.

For the modeling of the simultaneous failures of components, we use the L\'evy-frailty Marshall-Olkin distribution, originally proposed as a copula in~\cite{mai2009levy} and later extended to a multivariate distribution with exchangeable components in~\cite{mai2011reparameterizing}. It is a particular case of the Marshall-Olkin distribution, originally proposed in the late 1960s in~\cite{marshall1967multivariate}, which is a classic model for simultaneous failures in reliability modeling.  Other multivariate distributions used to model simultaneous failures are multivariate additive processes~\cite{mercier2023general}, shock models~\cite{mallor2003classification}, and especially copulas, see~\cite{davies2024residual} and references therein.

The literature on mathematical modeling of repairable systems is considerable. Some approaches in the literature are age-based policies, where the average cost is optimized based on the age of the system, see~\cite{niu2025preventive} and references therein; also conditioned-based maintenance (CBM) where there is a stochastic process modeling the degradation of the system and a maintenance is programmed based on the state of the degradation, see~\cite{grall2023continuous}; and so-called opportunistic maintenance, where system failures are used to replace other components that are operative but have a high probability of failure, see~\cite{barde2024efficient}. Other classical approaches are imperfect repairs, where a repair does not restore components or the system to be ``as good as new'' condition.
See~\cite{nakagawa2005maintenance,nakagawa2007shock,nakagawa2008advanced} and the survey~\cite{nicolai2008optimal} for overviews of maintenance topics in Reliability Theory.

Deriving expressions for the mean cost rate to optimize it is a central theme in Reliability Theory; however, there are no works proposing formulas for these quantities in a general setting of monotonous or coherent systems with simultaneous failures. Indeed, the following works have explored cost rate policies and expressions, but for specific coherent structures such as series, parallel, $k$-out-of-$n$:F, and related systems. The paper~\cite{eryilmaz2023age}~considers age-based policies and derives conditions for general coherent systems under discrete iid components' lifetimes; however, they derive explicit expressions only for $k$-out-of-$n$ and linear consecutive-$k$-out-of-$n$:F systems. Analogously, \cite{niu2025preventive}~considers series and parallel systems with dependent components, modeled using general copulas, and~\cite{eryilmaz2020optimization} considers parallel systems with dependent exchangeable distribution of components' lifetimes.
See the references therein for further works on replacement policies.
However, to the best of the authors' knowledge, there are no works in the literature proposing the simple policies we present in this work, nor deriving general formulas for semi-coherent systems with simultaneous failures, as we do here.

\subsection*{Notation}
For vectors $\mathbf x$ and $\mathbf y$ in $\{0, 1\}^n$, $\mathbf x \leq \mathbf y$ means that $x_i \leq y_i$ for all $i=1, \ldots, n$, and $\lvert \mathbf x \rvert = \sum_{i=1}^n x_i$. We denote the cardinality of a set $A$ by $|A|$, and sometimes by $\# A$ whenever there may be confusion with $\lvert \mathbf x \rvert$ for a vector $\mathbf x$. For a function or stochastic process, say $(N(s) \ : \ s \geq 0)$ and some $t>0$, we denote $N(t^-) := \lim_{s \nearrow t} N(s)$. Also, we denote $\mini{k}{r} := \min\{k,r\}$ for $k,r$ in $\R$. For a nondecreasing and positive function $f(n)$, we denote $\mathcal{O}(f(n))$ as a function such that $\limsup_n \mathcal{O}(f(n)) / f(n) < +\infty$.

\section{Mathematical model}\label{sec:model}

In this section we show the basic mathematical models we use.
We will consider so-called \emph{binary} systems where each component, and the system itself, can be in either a working or failed state.
In Section~\ref{sec:lifetimes} we specify the probabilistic behavior we use for components having simultaneous failures; then in Section~\ref{sec:spoiler} we give a result for $k$-out-of-$n$:F systems with repairs, as a spoiler of our main result; then in Section~\ref{sec:structure} we specify the system structure we consider; and in Section~\ref{sec:coherent} we give preliminary results for the system structure and distribution considered.

\subsection{Lifetimes of components}\label{sec:lifetimes}

Throughout this work, we consider a system with $n>1$ components, where each component can be in a working or failed state. For each component $i = 1, \ldots, n$, denote by $T_i$ the random time at which it fails. We want to model that several components can fail simultaneously, say because of degradation shocks that hit the system, and for that we focus on the joint distribution of the random vector of lifetimes $\mathbf T = (T_1, \ldots, T_n)$ in $\R^n_+$. A classical model for this is the following Marshall-Olkin (exponential) distribution, originally proposed in~\cite{marshall1967multivariate}.

\begin{definition}[Marshall-Olkin distribution]\label{def:MO}
A random vector $\mathbf T = (T_1, \ldots, T_n)$ in $\R^n$ is said to have a \emph{Marshall-Olkin} (MO) distribution if
\begin{align}\label{def:MO eq}
	T_i = \min_{V\subseteq \{1,\dots,n\} \, : \, i\in V} X_V, \qquad i = 1, \ldots, n,
\end{align}
where, for all $V\subseteq \{1,\dots,n\}$, $X_V$ is an exponential random variable with parameter $\lambda_V \geq 0$, and is independent of the other random variables.
\end{definition}

The random variables $X_V$ represent the time of arrival of a shock that simultaneously hits all components in the set $V$. Hence,~\eqref{def:MO eq} specifies that the time of failure of a component is the first arrival time of any of the shocks that hit it. It is known that the resulting distribution for $\mathbf T$ has a multidimensional version of the memoryless property of exponential random variables, see e.g.~\cite[Section 3.1]{matthias2017simulating}, and thus induces a Markovian structure on the failure times. However, a considerable drawback of the MO distribution is its \emph{parametric complexity} --- we need to specify the value of the parameters $\lambda_V \geq 0$ for all $V\subseteq \{1,\dots,n\}$, $V\neq\emptyset$, i.e., a total of $2^n-1$ parameters.

A particular subfamily of the MO family is the one of \emph{exchangeable} Marshall-Olkin (eMO) distributions, that hold when $\lambda_U=\lambda_V$ whenever $|U| = |V|$, i.e., they have the same cardinal; and that, indeed, induce a random vector $(T_1, \ldots, T_n)$ with exchangeable components.
A further subfamily of the eMO distributions is the following, proposed in~\cite{mai2009levy,mai2014multivariate}.

\begin{definition}[L\'evy-frailty Marshall-Olkin distribution]\label{def:LFMO}
A random vector $\mathbf T$ in $\R^n$ is said to have a \emph{L\'evy-frailty Marshall-Olkin} (LFMO) distribution if its components $(T_1, \ldots, T_n)$ can be jointly defined as
\begin{align}\label{def:LFMO eq}
T_i := \inf \left\{ t \geq 0 \ : \ L_t > \varepsilon_i \right\}, \qquad i = 1, \ldots, n,
\end{align}
where $\mathbf L = (L_t : t \geq 0)$ is a L\'evy subordinator stochastic process with $L_0=0$, and $\varepsilon_1, \ldots, \varepsilon_n$ is a collection of $n$ iid standard exponential random variables, that are independent of $\mathbf L$.
\end{definition}

In Figure~\ref{fig:LFMO ex} we show a simulation of a LFMO distributed vector with $n=3$ components. Note that the L\'evy subordinator process $\mathbf L$ acts as a common degradation process that ``kills'' each component, say~$i$, once it crosses its corresponding trigger $\varepsilon_i$.

\begin{figure}[h]
\centering
\begin{tikzpicture}[
    component/.style={draw, circle, inner sep=1pt, minimum size=0.6cm},
    system/.style={rounded corners=5mm, minimum width=2.5cm, minimum height=2.5cm, align=center},
    scale=1.
    ]
    \def\xmax{8}
    \def\ymax{4}
    \def\tsys{1.5}
    \def\ttwo{7}
    
    \draw[->] (0,0) -- (\xmax+0.5,0) node[below] {Time};
    \draw[->] (0,0) -- (0,\ymax+0.5);
    
    \def\epsilonthree{0.6}
    \def\epsilonone{0.8}
    \def\epsilontwo{3.3}
    
    \draw[gray, thick] (0,\epsilontwo) node[left, black] {$\varepsilon_2$} (0,\epsilontwo) -- (\xmax,\epsilontwo);
    \draw[gray, thick] (0,\epsilonone) node[left, black] {$\varepsilon_1$} (0,\epsilonone) -- (\xmax,\epsilonone);
    \draw[gray, thick] (0,\epsilonthree) node[left, black] {$\varepsilon_3$} (0,\epsilonthree) -- (\xmax,\epsilonthree); 
    
    \draw[blue, very thick] (0,0) -- (\tsys,0.3);
    \draw[blue, very thick, dashed] (\tsys,0.3) -- (\tsys,1.2);
    \draw[blue, very thick] (\tsys,1.2) -- (2.8,1.5);
    \draw[blue, very thick, dashed] (2.8,1.5) -- (2.8,2.1);
    \draw[blue, very thick] (2.8,2.1) -- (4.2,2.4);
    \draw[blue, very thick] (4.2,2.4) -- (5.5,2.7);
    \draw[blue, very thick] (5.5,2.7) -- (\ttwo,3.1);
    \draw[blue, very thick, dashed] (\ttwo,3.1) -- (\ttwo,3.7);
    \draw[blue, very thick] (\ttwo,3.7) -- (\xmax,3.9);
    
    \fill[blue] (\tsys,1.2) circle (1.5pt);
    \fill[blue] (2.8,2.1) circle (1.5pt);
    \fill[blue] (\ttwo,3.7) circle (1.5pt);
    
    \draw[black, thick] (\tsys,0) -- (\tsys,.3);
    
    \draw[black, thick] (\ttwo,0) -- (\ttwo,3.7);
    
    \node[below] at (\tsys,0) {$T_3 = T_1 = T_{fail}$};
    \node[below] at (\ttwo,0) {$T_2$};
    
    \node[draw, rectangle, fill=white, anchor=south west] at (0.5,\ymax-.2) {
        \textcolor{blue}{{\bf ---} L\'evy subordinator process $L$}
    };

\node[system] (sys111) at (10, 2) {};
\node[component] (c1_1) at (sys111.north) [xshift=-0.5cm, yshift=-0.5cm] {1};
\node[component] (c1_2) at (sys111.north) [xshift= 0.5cm, yshift=-0.5cm] {2};
\node[component] (c1_3) at (sys111.south) [yshift= 0.5cm] {3};
\draw (c1_1.east) -- (c1_2.west);
\draw (sys111.east) -| +(-.3,0) |- (c1_2.east);
\draw (sys111.east) -| +(-.3,0) |- (c1_3.east);
\draw (sys111.west) -| +(.3,0) |- (c1_3.west);
\draw (sys111.west) -| +(.3,0) |- (c1_1.west);

\end{tikzpicture}

\caption{
    A simulation of the random vector $(T_1, T_2, T_3)$ in $\R^3$ with an LFMO distribution: the failure time $T_i$ of component $i$ is the first time the L\'evy subordinator process $L$ surpasses the trigger $\varepsilon_i$. Note that two components fail simultaneously at time $T_1$: due to a jump of the L\'evy subordinator, components 1 and 3 fail at the same time, $T_1=T_3$. \emph{This is how  the LFMO model induces simultaneous failures of components.} Also, these are the failure times of the components of the system on the right, which works when there is a path of working components from left to right. Hence, the system failure time $\Tsys$ is also $T_1=T_3$.
}
\label{fig:LFMO ex}
\end{figure}
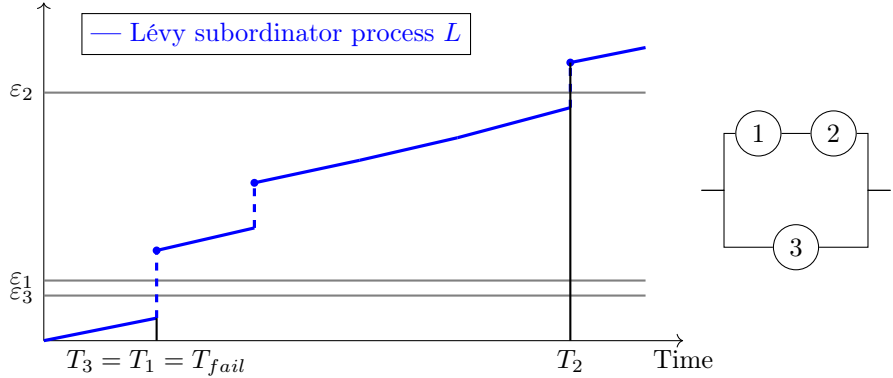

Some important properties of the LFMO distribution are the following.
First, marginally, the failure times of the components $T_1$, \ldots, $T_n$ are exponentially distributed with common mean $\E[T_i] = 1/\Psi(1)$.
Here, $\Psi$ is the so-called \emph{Laplace exponent function} $\Psi$ of the L\'evy subordinator $\mathbf L$, defined as $\Psi(x) = -\log \E [ e^{-x L_1} ]$ for all $x$ in $\R$; see below for examples, and~\cite{kyprianou2014fluctuations} for further details.
Second, as argued in~\cite[p.~103]{matthias2017simulating}, the LFMO distribution is equal in distribution to the MO random vector in~\eqref{def:MO eq} when the rates $\lambda_V$ are chosen as
\begin{align}\label{def:LV}
	\lambda^{(n)}_V := & \sum_{i=0}^{|V|-1} \binom{|V|-1}i (-1)^i \left(\Psi(n-|V|+i+1)-\Psi(n-|V|+i)\right) \\
    = & \sum_{j=n-|V|}^{n} (-1)^{j-n+|V|+1} \binom{|V|}{n-j} \Psi(j). \label{def:LV 2}
\end{align}
 Note that $\lambda^{(n)}_V$ only depends on $|V|$, hence onward we will abuse notation and write $\lambda_{|V|}^{(n)}$ instead of $\lambda^{(n)}_V$. 
 Another important property is that an LFMO distributed random vector $(T_1,\dots,T_n)$ has a so-called \emph{exchangeable} (EXC) distribution, meaning that for all permutation $\sigma$ of the set $\{1,\ldots,n\}$ it holds that the vectors $(T_1,\dots,T_n)$ and $(T_{\sigma(1)},\dots,T_{\sigma(n)})$ have the same distribution.
 In fact it is \emph{conditionally-iid}, as conditional on the path of~$\mathbf L$, the times $T_i$ are iid; see~\cite[Section~3.2]{matthias2017simulating}.
 Also, the joint reliability function of $(T_1,\dots,T_n)$ is
\begin{align*}
\P(T_1>t_1,\dots,T_n>t_n) = & \exp\left(-\sum_{i=1}^n (\Psi(i)-\Psi(i-1)) t_{n-i+1:n}\right) \\
= & \exp\left(-\sum_{\emptyset\neq V\subset\{1,\dots,n\}} \lambda^{(n)}_V \max_{i\in V}t_{i}\right),
\end{align*}
for $t_1,\dots,t_n\geq 0$, where $\Psi(0)=0$ and $0\leq t_{1:n}\leq\dots\leq t_{n:n}$ are the ordered values obtained from $t_1,\dots,t_n\geq 0$; see~\cite{mai2013sampling}.

 More important for reliability modeling, the LFMO distribution has low parametric complexity: the only parameters needed are the number $n$ of components of the system and the L\'evy subordinator process $\mathbf L$, which in turn is fully characterized by its Laplace exponent function $\Psi(x)$, see~\cite[Chapter 1]{kyprianou2014fluctuations}.
 Even more, the only information of the LFMO distribution that is needed for the results in this paper is the values of $\Psi(1)$, \ldots, $\Psi(n)$.

Some examples of L\'evy subordinator $\mathbf L$ are the following.
The most basic one is choosing $L_t = \mu t$ for some $\mu>0$.
In this case we obtain that the components' lifetimes are iid exponential random variables with rate $\mu$.
Another important class is the family 
of compound Poisson processes with non-negative drift $\mu \geq 0$, rate $\lambda>0$ and some non-negative jumps distribution, say $\mathcal{J}$; i.e., when the degradation process is $L_t = \mu t + \sum_{j=1}^{N_t} J_j$ for all $t>0$, where $\mathbf N$ is a Poisson process with rate $\lambda>0$ and $J_j$ are iid non-negative random variables with distribution $\mathcal{J}$.
In that case, the Laplace exponent function is $\Psi(x) = \mu x + \lambda (1-\E [e^{-x J_1}])$. This case can be informally described as there being a steady degradation making that, nominally, each component fails after an expected time of $1/\mu$; however, on average every $1/\lambda$ time units there is a degradation shock that can make several components fail at once, and where each surviving component fails with probability $p:=1-\E[e^{-J_1}]$. In fact, for $1 \leq l \leq k \leq n$, if there are $k$ surviving components, a degradation jump $J_j$ kills $l$ components with probability $\binom{k}{l} p^l(1-p)^{k-l}$.
We note that compound Poisson processes are referred to as \emph{finite activity} processes, as the number of jumps in any finite time interval is finite almost surely.
In contrast, some classical examples of \emph{infinite activity} subordinators (i.e., almost surely infinite jumps on any finite time interval) are Gamma processes, whose Laplace exponent is $\Psi(x) = \beta \log (1+x/\eta)$ for parameters $\beta, \eta > 0$; inverse Gaussian processes, with $\Psi(x) = \beta (\sqrt{ 2x+ \eta^2}-\eta)$ for $\beta, \eta > 0$; and stable subordinators for $\alpha$ in $(0,1)$, with $\Psi(x)=x^\alpha$.
Importantly, even though infinite activity subordinators cannot be simulated without bias due to the discrete nature of computers, the only information we need to compute our results are the values of $\Psi(1)$, \ldots, $\Psi(n)$.
See~\cite[Annex A.2]{matthias2017simulating} for a brief summary and properties of subordinators.


\paragraph{Order statistics of the LFMO distribution}
An element of the LFMO distribution that is crucial for our work is the order statistics of the components' failure times $T_1$, \ldots, $T_n$, denoted by $T_{1:n}$, $T_{2:n}$, \ldots, $T_{n:n}$. That is, $\{T_1, \ldots, T_n\} = \{T_{1:n}, \ldots, T_{n:n} \}$ and $T_{1:n} \leq T_{2:n} \leq \ldots \leq T_{n:n}$.
From the reliability perspective, $T_{k:n}$ is the time of failure of the $k$-th component that fails.
From~\cite[Proposition 1]{barrera2020limit} we have that 
\begin{align}
\nonumber
\P( T_{k:n}>t) & = \sum_{i = n-k+1}^n \binom{n}{i} \binom{i-1}{n-k} (-1)^{i-n+k-1} e^{-t \Psi(i)} \\
\label{eq:ETkn}
\E T_{k:n} & = \sum_{i = n-k+1}^n \binom{n}{i} \binom{i-1}{n-k} (-1)^{i-n+k-1}\frac{1}{\Psi(i)}
\end{align}
for $1 \leq k \leq n$. 
See~\cite{barrera2020limit,barrera2020approximating} for further results on the approximation of lower-, middle- and upper-order statistics of the LFMO distribution.

In particular, note that using the latter notation, $T_{1:n}$ is the failure time of a series system with $n$ components whose lifetimes are LFMO distributed. In this case, the reliability function is 
$\P(T_{1:n}>t) = \exp\left(-\Psi(n)t\right)$
for $t\geq 0$, and its mean time-to-failure (MTTF) is  $\E T_{1:n} = 1/\Psi(n)$.
In general, the series system of $k$ components, $1\leq k\leq n$, has lifetime $T_{1:k}$ that is exponentially distributed with mean $1/\Psi(k)$.

\paragraph{Markov chains of number of failed components}
We now consider two Markov chains associated to the number of failed components; see Figure~\ref{fig:number} for an illustrative example. Their analysis and associated probabilities are key for our results.


\begin{figure}[h]
\centering

\scalebox{0.5}{
\begin{tikzpicture}[
    component/.style={draw, circle, inner sep=1pt, minimum size=0.6cm},
    system/.style={fill=green!10, rounded corners=5mm, minimum width=2.5cm, minimum height=2.5cm, align=center},
    failed_system/.style={fill=red!10, rounded corners=5mm, minimum width=2.5cm, minimum height=2.5cm, align=center},
    cross out/.pic={
        \draw[red, very thick] (-0.3, -0.3) -- (0.3, 0.3);
        \draw[red, very thick] (-0.3, 0.3) -- (0.3, -0.3);
    }
]

\node[system] (sys111) at (0, -4) {};
\node[component] (c1_1) at (sys111.north) [xshift=-0.5cm, yshift=-0.5cm] {1};
\node[component] (c1_2) at (sys111.north) [xshift= 0.5cm, yshift=-0.5cm] {2};
\node[component] (c1_3) at (sys111.south) [yshift= 0.5cm] {3};
\draw (c1_1.east) -- (c1_2.west);
\draw (sys111.east) -| +(-.3,0) |- (c1_2.east);
\draw (sys111.east) -| +(-.3,0) |- (c1_3.east);
\draw (sys111.west) -| +(.3,0) |- (c1_3.west);
\draw (sys111.west) -| +(.3,0) |- (c1_1.west);

\node[system] (sys011) at (4, 0) {};
\node[component] (c2_1) at (sys011.north) [xshift=-0.5cm, yshift=-0.5cm] {1};
\node[component] (c2_2) at (sys011.north) [xshift= 0.5cm, yshift=-0.5cm] {2};
\node[component] (c2_3) at (sys011.south) [yshift= 0.5cm] {3};
\draw (c2_1.east) -- (c2_2.west);
\draw (sys011.east) -| +(-.3,0) |- (c2_2.east);
\draw (sys011.east) -| +(-.3,0) |- (c2_3.east);
\draw (sys011.west) -| +( .3,0) |- (c2_3.west);
\draw (sys011.west) -| +( .3,0) |- (c2_1.west);
\pic at (c2_1) {cross out};

\node[failed_system] (sys100) at (8, -0) {};
\node[component] (c9_1) at (sys100.north) [xshift=-0.5cm, yshift=-0.5cm] {1};
\node[component] (c9_2) at (sys100.north) [xshift= 0.5cm, yshift=-0.5cm] {2};
\node[component] (c9_3) at (sys100.south) [yshift= 0.5cm] {3};
\draw (c9_1.east) -- (c9_2.west);
\draw (sys100.east) -| +(-.3,0) |- (c9_2.east);
\draw (sys100.east) -| +(-.3,0) |- (c9_3.east);
\draw (sys100.west) -| +( .3,0) |- (c9_3.west);
\draw (sys100.west) -| +( .3,0) |- (c9_1.west);
\pic at (c9_2) {cross out};
\pic at (c9_3) {cross out};

\node[failed_system] (sys000) at (12, -4) {};
\node[component] (c4_1) at (sys000.north) [xshift=-0.5cm, yshift=-0.5cm] {1};
\node[component] (c4_2) at (sys000.north) [xshift= 0.5cm, yshift=-0.5cm] {2};
\node[component] (c4_3) at (sys000.south) [yshift= 0.5cm] {3};
\draw (c4_1.east) -- (c4_2.west);
\draw (sys000.east) -| +(-.3,0) |- (c4_2.east);
\draw (sys000.east) -| +(-.3,0) |- (c4_3.east);
\draw (sys000.west) -| +( .3,0) |- (c4_3.west);
\draw (sys000.west) -| +( .3,0) |- (c4_1.west);
\pic at (c4_1) {cross out};
\pic at (c4_2) {cross out};
\pic at (c4_3) {cross out};

\node[system] (sys101) at (4, -4) {};
\node[component] (c5_1) at (sys101.north) [xshift=-0.5cm, yshift=-0.5cm] {1};
\node[component] (c5_2) at (sys101.north) [xshift= 0.5cm, yshift=-0.5cm] {2};
\node[component] (c5_3) at (sys101.south) [yshift= 0.5cm] {3};
\draw (c5_1.east) -- (c5_2.west);
\draw (sys101.east) -| +(-.3,0) |- (c5_2.east);
\draw (sys101.east) -| +(-.3,0) |- (c5_3.east);
\draw (sys101.west) -| +( .3,0) |- (c5_3.west);
\draw (sys101.west) -| +( .3,0) |- (c5_1.west);
\pic at (c5_2) {cross out};

\node[failed_system] (sys010) at (8, -4) {};
\node[component] (c6_1) at (sys010.north) [xshift=-0.5cm, yshift=-0.5cm] {1};
\node[component] (c6_2) at (sys010.north) [xshift= 0.5cm, yshift=-0.5cm] {2};
\node[component] (c6_3) at (sys010.south) [yshift= 0.5cm] {3};
\draw (c6_1.east) -- (c6_2.west);
\draw (sys010.east) -| +(-.3,0) |- (c6_2.east);
\draw (sys010.east) -| +(-.3,0) |- (c6_3.east);
\draw (sys010.west) -| +( .3,0) |- (c6_3.west);
\draw (sys010.west) -| +( .3,0) |- (c6_1.west);
\pic at (c6_1) {cross out};
\pic at (c6_3) {cross out};

\node[system] (sys001) at (8, -8) {};
\node[component] (c7_1) at (sys001.north) [xshift=-0.5cm, yshift=-0.5cm] {1};
\node[component] (c7_2) at (sys001.north) [xshift= 0.5cm, yshift=-0.5cm] {2};
\node[component] (c7_3) at (sys001.south) [yshift= 0.5cm] {3};
\draw (c7_1.east) -- (c7_2.west);
\draw (sys001.east) -| +(-.3,0) |- (c7_2.east);
\draw (sys001.east) -| +(-.3,0) |- (c7_3.east);
\draw (sys001.west) -| +( .3,0) |- (c7_3.west);
\draw (sys001.west) -| +( .3,0) |- (c7_1.west);
\pic at (c7_1) {cross out};
\pic at (c7_2) {cross out};

\node[system] (sys110) at (4, -8) {};
\node[component] (c8_1) at (sys110.north) [xshift=-0.5cm, yshift=-0.5cm] {1};
\node[component] (c8_2) at (sys110.north) [xshift= 0.5cm, yshift=-0.5cm] {2};
\node[component] (c8_3) at (sys110.south) [yshift= 0.5cm] {3};
\draw (c8_1.east) -- (c8_2.west);
\draw (sys110.east) -| +(-.3,0) |- (c8_2.east);
\draw (sys110.east) -| +(-.3,0) |- (c8_3.east);
\draw (sys110.west) -| +( .3,0) |- (c8_3.west);
\draw (sys110.west) -| +( .3,0) |- (c8_1.west);
\pic at (c8_3) {cross out};


\draw[->, cyan, thick, dashed] (sys111) .. controls +(2,2) and +(-2,-2) .. (sys011);
\draw[->, cyan, thick, dashed] (sys111) .. controls +(2,-.3) and +(-2,.5) .. (sys101);
\draw[->, cyan, thick, dashed] (sys111) .. controls +(2,-2) and +(-2,2) .. (sys110);
\draw[->, cyan, thick, dotted] (sys111) .. controls +(2,1.7) and +(-2,-2) .. (sys100);
\draw[->, cyan, thick, dotted] (sys111) .. controls +(2,-.6) and +(-2,.5) .. (sys010);
\draw[->, cyan, thick, dotted] (sys111) .. controls +(2,-1.7) and +(-2,2) .. (sys001);
\draw[->, cyan, thick, densely dashdotted] (sys111) .. controls +(2,-.9) and +(-2,.5) .. (sys000);

\draw[->, thick, magenta, dashed] (sys011) .. controls +(2,-1.0) and +(-2,2) .. (sys010);
\draw[->, thick, magenta, dashed] (sys011) .. controls +(2,-1.5) and +(-2,2) .. (sys001);
\draw[->, thick, magenta, dotted] (sys011) .. controls +(2,-0.5) and +(-2,2) .. (sys000);

\draw[->, thick, magenta, dashed] (sys110) .. controls +(2,1.5) and +(-2,-2) .. (sys100);
\draw[->, thick, magenta, dashed] (sys110) .. controls +(2,1.0) and +(-2,-2) .. (sys010);
\draw[->, thick, magenta, dotted] (sys110) .. controls +(2,0.5) and +(-2,-2) .. (sys000);

\draw[->, thick, magenta, dashed] (sys101) .. controls +(2,1.5) and +(-2,-2) .. (sys100);
\draw[->, thick, magenta, dashed] (sys101) .. controls +(2,-1.5) and +(-2,2) .. (sys001);
\draw[->, thick, magenta, dotted] (sys101) .. controls +(2,-0.8) and +(-2,0.5) .. (sys000);

\draw[->, thick, teal, dashed] (sys100) .. controls +(2,-2) and +(-2,2) .. (sys000);
\draw[->, thick, teal, dashed] (sys010) .. controls +(2,-.5) and +(-2,.5) .. (sys000);
\draw[->, thick, teal, dashed] (sys001) .. controls +(2,2) and +(-2,-2) .. (sys000);


\draw [thick, rounded corners=.5cm] (   -1.4, 1.5) -- (   -1.4, -9.5) -- (   1.4, -9.5) -- (   1.4, 1.5) -- cycle;
\draw [thick, rounded corners=.5cm] ( -1.4+4, 1.5) -- ( -1.4+4, -9.5) -- ( 1.4+4, -9.5) -- ( 1.4+4, 1.5) -- cycle;
\draw [thick, rounded corners=.5cm] ( -1.4+8, 1.5) -- ( -1.4+8, -9.5) -- ( 1.4+8, -9.5) -- ( 1.4+8, 1.5) -- cycle;
\draw [thick, rounded corners=.5cm] (-1.4+12, 1.5) -- (-1.4+12, -9.5) -- (1.4+12, -9.5) -- (1.4+12, 1.5) -- cycle;

\draw[line width=8pt, -{Triangle[width=15pt,length=10pt]}] (0,-9.5) -- (0,-9.5-.9);
\draw[line width=8pt, -{Triangle[width=15pt,length=10pt]}] (4,-9.5) -- (4,-9.5-.9);
\draw[line width=8pt, -{Triangle[width=15pt,length=10pt]}] (8,-9.5) -- (8,-9.5-.9);
\draw[line width=8pt, -{Triangle[width=15pt,length=10pt]}] (12,-9.5) -- (12,-9.5-.9);


\draw (0,-12-0.6) circle (1.3cm) node {\begin{tabular}{c} 0 failed \\ components \end{tabular}} ;
\draw (4,-12-0.6) circle (1.3cm) node {\begin{tabular}{c} 1 failed \\ components \end{tabular}} ;
\draw (8,-12-0.6) circle (1.3cm) node {\begin{tabular}{c} 2 failed \\ components \end{tabular}} ;
\draw (12,-12-0.6) circle (1.3cm) node {\begin{tabular}{c} 3 failed \\ components \end{tabular}} ;

\draw[->, cyan, thick, densely dashdotted]   (0+1,-12+1-0.7) .. controls +(2.5,2.5) and +(-.5,.5) .. (12-1,-12+1-0.7);
\draw[->, cyan, ultra thick, densely dotted]   (0+1,-12+1-0.7) .. controls +(1.5,1.5) and +(-.5,.5) .. (8-1,-12+1-0.7);
\draw[->, cyan, ultra thick, dashed]   (0+1,-12+1-0.7) .. controls +(.5,.5) and +(-.5,.5) .. (4-1,-12+1-0.7);

\draw[->, magenta, ultra thick, densely dotted] (4+1,-12+1-0.7) .. controls +(1.5,1.5) and +(-.5,.5) .. (12-1,-12+1-0.7);
\draw[->, magenta, ultra thick, dashed] (4+1,-12+1-0.7) .. controls +(.5,.5) and +(-.4,.4) .. (8-1,-12+1-0.7);

\draw[->, teal, ultra thick, dashed] (8+1,-12+1-0.7) .. controls +(.5,.5) and +(-.4,.4) .. (12-1,-12+1-0.7);


\end{tikzpicture}
}%
\caption{
    The Markov chain $\mathbf \Nc = (\Nc(t) \, : \, t \geq 0)$ of number of failed components (bottom) is obtained by aggregating the original states and transitions of the system (top); this is formalized in Lemma~\ref{lemma1}. For example, Lemma~3 states that, indeed, the rate of the blue dotted transition from 0 to 2 failed components in the bottom, is obtained by summing the $\binom{3-0}{2-0} = 3$ blue dotted transition rates in the top. Similarly, the time $T_{k:n}$ until $k$ components have failed, is just the time required to go from $0$ to $k$ in the chain at the bottom; hence, the event $N(T_{r:n}) = k$ of the system having $k$ failed components at the time of the $r$-th failure, is the event of the chain jumping from $\{ 0, \ldots, r-1\}$ to $k$ failed components, see Lemma~3 part 5.
}
\label{fig:number}

\end{figure}
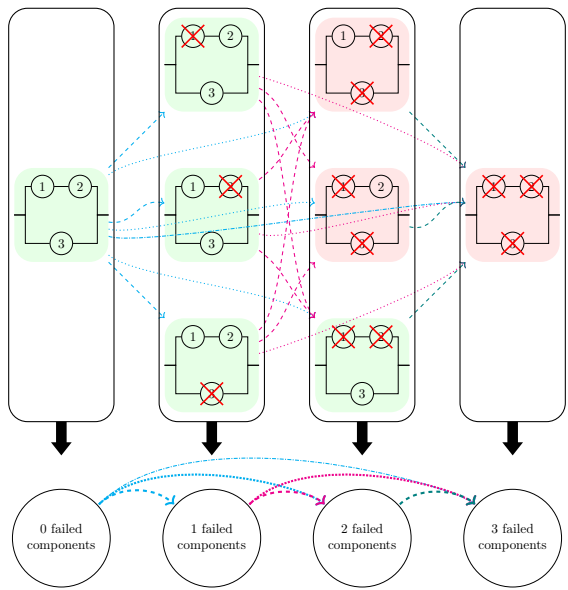

Consider a system with $n$ components, whose lifetimes follow the LFMO distribution in~\eqref{def:LFMO eq}.
Denote by $\mathbf \Nc = (\Nc(t) \, : \, t \geq 0)$ the continuous-time stochastic process that counts the number of failed components in the system at each time $t \geq 0$, i.e., $\Nc(t) := \sum_{i=1}^n \I{L_t \geq \varepsilon_i}$, where $\mathbf L$ and $\varepsilon_i$ are, respectively, the L\'evy subordinator process and the triggers in Definition~\ref{def:LFMO}. From~\eqref{def:LFMO eq} and $e^{-t \Psi(x)} = \E [ e^{-x L_t} ]$, it holds that\footnote{$\P\left( \Nc(t)=k \right) = \E \left[ \P\left( \Nc(t)=k | \mathbf L\right) \right] = \E \left[ \binom{n}{k} \P(L_t \geq \varepsilon_1 | \mathbf L)^k \P(L_t < \varepsilon_1 | \mathbf L)^{n-k} \right] = \binom{n}{k} \E \left[(1-e^{-L_t})^k (e^{-L_t})^{n-k} \right]$}
\begin{align*}
\P\left( \Nc(t)=k \right) = \sum_{i=n-k}^n \binom{n}{i} \binom{i}{n-k} (-1)^{i-n+k} e^{-t \Psi(i)}.
\end{align*}
Alternatively, $\P\left( T_{k:n}>t \right) = \P\left( \Nc(t)<k \right)$ and $\P\left( \Nc(t)=k \right) = \P( T_{k+1:n}>t ) - \P( T_{k:n}>t )$ with $T_{n+1:n} = +\infty$.
In fact, $\mathbf \Nc$ is a continuous-time Markov chain, due to the memoryless and Markov properties, respectively, of the triggers $\varepsilon_i$ and the L\'evy subordinator $\mathbf L$.
It also holds that $\lim_{t \to +\infty} \Nc(t) = n$.

An important fact, that we use profusely in our results, is that there is a ``dual'' relation between the ordered failure times $T_{k:n}$, $k=1,\ldots,n$, and the number $N(T_{k:n})$ of failed components at these times. Indeed, it holds that $\{ N(T_{r:n}) < k \} = \{ T_{r:n} < T_{k:n} \}$, $\{ N(T_{r:n}) \geq k \} = \{ T_{r:n} \geq T_{k:n} \}$ and $\{ N(T_{r:n}) = k \} = \{ T_{r:n} = T_{k:n} < T_{k+1:n} \}$.
The results~\eqref{PTrnTknl}, \eqref{PTrnTkne} and~\eqref{PTkn eq j} below can be viewed as probabilities exploiting this connection.

We also consider the associated discrete-time Markov chain $(\Nd_i \, : \, i=0,1,\ldots)$ that counts the number of failed components after the $i$-th time of failure of components, i.e., $\Nd_i = \Nc(\tau_i)$ where $\tau_0 :=0$ and $\tau_{i+1} := \inf \{ t > \tau_{i} : \Delta \Nc(t) := \Nc(t) - \Nc(t^-) > 0 \}$ for all $i=0,1,\ldots, n-1$, with the convention that $\inf \emptyset = +\infty$.
Note that $0 < \tau_1 < \tau_2 < \ldots$ and $\tau_1$, $\tau_2$, \ldots correspond to the times at which \emph{one or more} components fail, because several components can fail simultaneously. For instance, in the example shown in Figure~\ref{fig:LFMO ex} we see that $\tau_1 = T_1 = T_3 < \tau_2 = T_2$, so $\Nd_0 = 0$, $\Nd_1 = 2$ and $\Nd_2 = 3$.
It holds that $\Nd$ is a discrete-time Markov chain over the state space $\{0, 1, \ldots, n\}$, and the chain is absorbed at state $n$.

The following lemma establishes several key results and quantities that we use in our results.


\begin{lemma}\label{lemma1}
Consider a system with $n$ components where the components' lifetimes follow a LFMO distribution with Laplace exponent function $\Psi$. 
Recall the definition~\eqref{def:LV} for the rates $\lambda^{(l)}_{k}$, for any $1 \leq k \leq l \leq n$.

\begin{enumerate}
\item For the continuous-time Markov chain $\mathbf \Nc = (\Nc(t) \, : \, t \geq 0)$ of number of failed components at each time, the rate of the transition from having $i$ to $j$ failed components is
$$\binom{n-i}{j-i} \lambda_{j-i}^{(n-i)} = \sum_{k=i}^{j} (-1)^{j-k+1} \frac{\binom{n}{j} \binom{j}{k} \binom{k}{i} }{ \binom{n}{i}} \Psi(n-k)$$ for $0 \leq i < j \leq n$ and zero otherwise.
That is, for $i<j$, given that the system with $n$ components has $i$ of them failed, the time until the next arrival of any shock that simultaneously hits $j-i$ of the remaining $n-i$ working components, is distributed exponential with rate $\binom{n-i}{j-i} \lambda_{j-i}^{(n-i)}$.

\item For the discrete-time Markov chain $\mathbf \Nd = (\Nd_i \, : \, i=0,1,\ldots)$ of number of failed components after the $i$-th time of failure of components, the probability $P_{i,j}$ of making a (one-step) transition from having $i$ to $j$ failed components (i.e., given that the system with $n$ components has $i$ of them failed, next arrives a shock that hits $j-i$ of the working components) is
\begin{align}\label{TP}
P_{i, j} = & \binom{n-i}{j-i} \frac{ \lambda^{(n-i)}_{j-i} }{ \Psi(n-i) } = \sum_{k=i}^{j} (-1)^{j-k+1} \frac{\binom{n}{j} \binom{j}{k} \binom{k}{i} }{ \binom{n}{i}} \frac{\Psi(n-k)}{\Psi(n-i)}
\end{align}
for $0 \leq i < j \leq n$ and zero otherwise.

\item For $0 \leq i < j \leq k \leq n$, denote by $\QQ{i}{j}$ (respectively, $\QQQ{i}{j}{k}$) the probability that the discrete-time chain $\mathbf \Nd$ of number of failed components, goes from having zero to $i$ failed components in one or more steps, and then jumps \emph{in one step} from the set $\{0, \ldots, i\}$ into the state $j$ (respectively, into the set $\{j, \ldots, k\}$).
It holds that $\QQ{i}{j}$ it satisfies the following recursive formula: 
\begin{align}\label{def:Q}
\QQ{i}{j} =
\begin{cases}
P_{0, j} & \text{for $i = 0$ and $j>i$ }  \\ 
\QQ{i-1}{j} + \QQ{i-1}{i} \, P_{i,j} & \text{for $i \geq 1$ and $j>i$} \\ 
0 & \text{otherwise,}
\end{cases}
\end{align}
and $\QQQ{i}{j}{k} = \sum_{l=j}^k \QQ{i}{l}$.

\item Denote by $\Q{i}{j}$ the probability that the chain $\mathbf \Nd$ goes from having zero to $i$ failed components in one or more steps, and then jumps, \emph{in one step}, from $i$ to $j$ failed components. It holds that $\Q{i}{j} = \sum_{l = 0}^i (P^l)_{0, i} P_{i, j}$ for $0 \leq i < j \leq n$ and 0 otherwise, where $(P^l)_{0, i}$ is the component $(0,i)$ of the matrix $P^l$, and the matrix $P = (P_{i,j} : i,j=0,\ldots,n)$ has the probabilities $P_{i,j}$ defined in~\eqref{TP}. Also, $\QQ{i}{j} = \sum_{l_1=0}^i \Q{l_1}{j}$ and $\QQQ{i}{j}{k} = \sum_{l_1=0}^i \sum_{l_2=j}^k \Q{l_1}{l_2}$.

\item For $1 \leq r, k \leq n$,
\begin{align}
\label{PTrnTknl}
\P( T_{r:n} < T_{k:n} ) & = 
\begin{cases}
\QQQ{r-1}{r}{k-1} & \text{for } r<k \\ 
0 & \text{for } r \geq k
\end{cases}
 \\
\label{PTrnTkne}
\P( T_{r:n} = T_{k:n} ) & = 
\begin{cases}
\QQQ{r-1}{k}{n} & \text{for } r<k \\ 
1 & \text{for } r = k
\end{cases} \\
\label{PTkn eq j}
\P\left( N(T_{r:n}) = k \right) & =
\begin{cases}
\QQ{r-1}{k} & \text{for } r \leq k \\ 
0 & \text{for } r > k.
\end{cases}
\end{align}
\end{enumerate}
\end{lemma}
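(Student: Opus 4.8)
The plan is to handle Parts~1--2 by a \emph{memoryless reduction} of the LFMO model to a smaller LFMO model on the surviving components, Parts~3--4 by elementary level-crossing bookkeeping for the non-decreasing chain $\mathbf\Nd$, and Part~5 by combining the dual relation between $T_{k:n}$ and $N(T_{r:n})$ (recalled just before the lemma) with~\eqref{PTkn eq j}, which is where the real content of Part~5 sits. I expect the main obstacle to be Part~1: one must justify carefully that, conditionally on $\Nc(t)=i$, the surviving $n-i$ components again follow an LFMO law with the same $\Psi$ --- the conditioning event is not a stopping time, so this is done via the independence of the triggers $\varepsilon_\ell$ and $\mathbf L$ together with memorylessness, or via the strong Markov property of $\mathbf\Nc$ at its jump times --- and then translate the shock rates $\lambda^{(n-i)}_{j-i}$ into the stated $\Psi$-expression through a binomial identity; the remaining parts are routine.

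For Part~1, fix $0\le i<j\le n$ and condition on $\Nc(t)=i$; by exchangeability I may further condition on the identity of the set $W$ of surviving components, with $|W|=n-i$. Since $\varepsilon_1,\dots,\varepsilon_n$ are iid $\mathrm{Exp}(1)$ independent of $\mathbf L$, memorylessness gives that the residual triggers $(\varepsilon_\ell-L_t)_{\ell\in W}$ are iid $\mathrm{Exp}(1)$, and the post-$t$ increments $(L_{t+s}-L_t:s\ge 0)$ are a fresh L\'evy subordinator with Laplace exponent $\Psi$ independent of them; hence the failure process of the $n-i$ survivors is, in law, an LFMO model on $n-i$ components with the same $\Psi$. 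By the MO$\,\equiv\,$LFMO equivalence recalled after~\eqref{def:LV 2}, this reduced model is an MO model with shock rates $\lambda^{(n-i)}_{|V|}$, so the superposition of the shocks hitting exactly $j-i$ survivors gives, from the all-working state, transition rate $\sum_{V\subseteq W,\,|V|=j-i}\lambda^{(n-i)}_{j-i}=\binom{n-i}{j-i}\lambda^{(n-i)}_{j-i}$; this is exactly the $i\to j$ rate of $\mathbf\Nc$. (Equivalently, one can differentiate at $h=0^+$ the identity $\P(\Nc^{red}(h)=j-i)=\binom{n-i}{j-i}\sum_{a=0}^{j-i}\binom{j-i}{a}(-1)^a e^{-h\Psi(n-j+a)}$ for the count process $\Nc^{red}$ of the reduced model, obtained by conditioning on $\mathbf L$ and using $\E[e^{-xL_h}]=e^{-h\Psi(x)}$.) The second displayed expression then follows by substituting~\eqref{def:LV 2} with $|V|=j-i$ and $n$ replaced by $n-i$, reindexing by $k:=n-j'$, and using $\binom{n-i}{j-i}\binom{j-i}{k-i}=\binom{n}{j}\binom{j}{k}\binom{k}{i}/\binom{n}{i}$. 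For Part~2, summing the Part~1 rates over $j>i$ gives the total exit rate of state $i$ of $\mathbf\Nc$, which is the rate of its exponential sojourn in state $i$; this sojourn is the time to the first failure in the survivor subsystem, i.e.\ the lifetime $T_{1:(n-i)}$ of a series system of $n-i$ LFMO components, which is $\mathrm{Exp}(\Psi(n-i))$ as recalled in Section~\ref{sec:lifetimes}. Hence the embedded jump chain $\mathbf\Nd$ has $P_{i,j}=\binom{n-i}{j-i}\lambda^{(n-i)}_{j-i}/\Psi(n-i)$, and dividing the Part~1 identity by $\Psi(n-i)$ gives the second form; $P_{i,j}=0$ for $j\le i$ since $\mathbf\Nd$ strictly increases until absorption at $n$.

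For Part~3, set $\sigma_i:=\min\{m\ge 0:\Nd_m>i\}$ (a.s.\ finite, and $\ge 1$ because $\Nd_0=0$), so that $\QQ{i}{j}=\P(\Nd_{\sigma_i}=j)$, which is $0$ for $j\le i$. Conditioning on $\Nd_{\sigma_{i-1}}$ and using that $\mathbf\Nd$ is non-decreasing: either $\Nd_{\sigma_{i-1}}=j'>i$, whence $\sigma_i=\sigma_{i-1}$ and $\Nd_{\sigma_i}=j'$; or $\Nd_{\sigma_{i-1}}=i$, whence $\sigma_i=\sigma_{i-1}+1$ and, by the Markov property, $\Nd_{\sigma_i}=j$ with probability $P_{i,j}$. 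Taking $j'=j$ in the first case yields $\QQ{i}{j}=\QQ{i-1}{j}+\QQ{i-1}{i}\,P_{i,j}$ for $i\ge 1$, $j>i$; the base case $\QQ{0}{j}=P_{0,j}$ follows from $\sigma_0=1$; and $\QQQ{i}{j}{k}=\sum_{l=j}^k\QQ{i}{l}$ is additivity over the disjoint events $\{\Nd_{\sigma_i}=l\}$. For Part~4, by the Markov property $\Q{i}{j}=\P(\mathbf\Nd\text{ visits }i)\,P_{i,j}$; since $\mathbf\Nd$ starts at $0$ and strictly increases, it can be at $i$ only at one of the steps $l=0,\dots,i$, so $\P(\mathbf\Nd\text{ visits }i)=\sum_{l=0}^i(P^l)_{0,i}$ and $\Q{i}{j}=\sum_{l=0}^i(P^l)_{0,i}P_{i,j}$. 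Finally, the last state in $\{0,\dots,i\}$ visited before $\sigma_i$ is some $l_1\le i$, from which the chain jumps directly to $\Nd_{\sigma_i}$; conversely, if $\mathbf\Nd$ visits $l_1\le i$ and then jumps to some $j>i$, monotonicity forces $j=\Nd_{\sigma_i}$. Partitioning over $l_1$ gives $\QQ{i}{j}=\sum_{l_1=0}^i\Q{l_1}{j}$ and $\QQQ{i}{j}{k}=\sum_{l_1=0}^i\sum_{l_2=j}^k\Q{l_1}{l_2}$.

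For Part~5, observe that $T_{r:n}$ is realized at the batch time $\tau_m$ with $m=\min\{l:\Nd_l\ge r\}=\sigma_{r-1}$, so $N(T_{r:n})=\Nd_{\sigma_{r-1}}$; hence $\P(N(T_{r:n})=k)=\QQ{r-1}{k}$ for $k\ge r$ and $=0$ for $k<r$ (as $N(T_{r:n})\ge r$), which is~\eqref{PTkn eq j}. Then, using the dual relations $\{N(T_{r:n})<k\}=\{T_{r:n}<T_{k:n}\}$ and $\{N(T_{r:n})\ge k\}=\{T_{r:n}\ge T_{k:n}\}$ recalled before the lemma, for $r<k$ one gets $\P(T_{r:n}<T_{k:n})=\sum_{l=r}^{k-1}\QQ{r-1}{l}=\QQQ{r-1}{r}{k-1}$ and $\P(T_{r:n}=T_{k:n})=\P(T_{r:n}\ge T_{k:n})=\sum_{l=k}^{n}\QQ{r-1}{l}=\QQQ{r-1}{k}{n}$ (here $T_{r:n}\ge T_{k:n}$ amounts to equality since $r<k$ forces $T_{r:n}\le T_{k:n}$), while for $r\ge k$ we have $T_{r:n}\ge T_{k:n}$, so $\P(T_{r:n}<T_{k:n})=0$, and for $r=k$ trivially $\P(T_{r:n}=T_{k:n})=1$. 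This accounts for all five parts.
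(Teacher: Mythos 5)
Your proof is correct and follows essentially the same route as the paper: reduce the conditional dynamics to an LFMO/MO model on the $n-i$ survivors via memorylessness of the triggers and the Markov property of the subordinator, superpose the $\binom{n-i}{j-i}$ shocks, pass to the embedded jump chain, and read off Part~5 from the first-exit distribution of $\mathbf\Nd$ out of $\{0,\dots,r-1\}$. The one genuinely different step is the normalization in Part~2: the paper proves $\sum_{k>i}\binom{n-i}{k-i}\lambda^{(n-i)}_{k-i}=\Psi(n-i)$ by an explicit binomial/telescoping computation on the rates from~\eqref{def:LV}, whereas you identify the sojourn time of $\mathbf\Nc$ in state $i$ with the series-system lifetime $T_{1:(n-i)}$ of the reduced model, which is $\mathrm{Exp}(\Psi(n-i))$ by the fact recalled in Section~\ref{sec:lifetimes} (itself independent of the lemma, so there is no circularity). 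Your route avoids the combinatorial manipulation entirely and is arguably cleaner; the paper's computation has the minor virtue of verifying directly that the rates in~\eqref{def:LV} are internally consistent, i.e., that the row sums of the generator reproduce $\Psi(n-i)$, without appealing to the series-system formula. Elsewhere you are somewhat more explicit than the paper (the stopping-time formulation $\sigma_i=\min\{m:\Nd_m>i\}$ in Parts~3--5, and the careful handling of the non-stopping-time conditioning in Part~1), but the substance is the same.
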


In the rest of the paper, our results will be mostly expressed using the probabilities $\P( T_{r:n} < T_{k:n} )$, $\P( T_{r:n} = T_{k:n} )$ and $\P\left( N(T_{r:n}) = k \right)$. Lemma~\ref{lemma1} shows that these reduce to the terms $\QQ{i}{j}$ and $\QQQ{i}{j}{k}$ (see Part 5.), and also gives two ways of computing the latter: a recursive formula in Part 3., and a power matrix formula in Part 4.
Also note that, from the computational perspective, the only information of the LFMO distribution needed to compute the probabilities in Lemma~\ref{lemma1} 
are the values of $\Psi(1)$, \ldots, $\Psi(n)$.

We also note that  Parts 1.~and 2.~of Lemma~\ref{lemma1} were already observed in~\cite[Section 3.3.3]{matthias2017simulating} in giving an efficient simulation algorithm for the failure times of the components. The latter is, in essence, that if there are $i$ failed components, sample the next time of a failure with an exponential distribution of rate $\Psi(n-i)$; then sample the new number of failed components $j$ in $i+1$, \ldots, $n$ according to the probabilities $P_{i,j}$ of~\eqref{TP}; and then choose uniformly at random, between the $n-i$ alive components, the newly $j-i$ failed components.

\subsection{Spoiler of simple repair policies}\label{sec:spoiler}

To give a glimpse of the main result of this paper, Theorem~\ref{theo} in Section~\ref{sec:main}, we now give a result that shows the long-term cost of a simple repair policy for a $k$-out-of-$n$:F system; i.e., systems that fail when $k$ or more components have failed.
Its proof is direct from Theorem~\ref{theo} in Section~\ref{sec:main}, by using the signature vector $\mathbf s = \mathbf e_k$, the $k$-th canonical vector, and equations~\eqref{PTrnTknl}, \eqref{PTrnTkne} and~\eqref{PTkn eq j}.

\begin{proposition}\label{corollary}
Consider a $k$-out-of-$n$:F system. 
Assume that its components' lifetimes follow an LFMO distribution. Further assume that we can repair (instantaneously) all the failed components, at cost $\ccmp(j)$ for repairing $j$ failed components, plus an additional cost $\csys$ if the system has also failed. Lastly, consider a repair policy in which all failed components are instantly repaired when the system fails, or when $r$ or more of its components have failed, for a given $1 \leq r \leq n$. 

\begin{enumerate}
\item The probability $p\r$ that the first repair is due to a system failure is $p\r = \QQQ{r-1}{k}{n}$ if $k>r$ and $1$ if $k \leq r$.

\item Letting $C[0, t]$ be the cumulative (random) cost of operating the system up to time $t$, the long-term mean cost is 
\begin{align*}
\lim_{t \to \infty} \frac{C[0, t]}{t}
=
\frac{
p\r \csys + \sum_{j=\mini{k}{r}}^{n} \QQ{(\mini{k}{r})-1}{j} \, \ccmp(j)
}{
\E \left[ T_{\mini{k}{r}:n} \right]
}
\quad \text{a.s.,}
\end{align*}
where $\QQ{(\mini{k}{r})-1}{j}$ and $\E \left[ T_{\mini{k}{r}:n} \right]$ are computed as in~\eqref{def:Q} and~\eqref{eq:ETkn}, respectively, since $\mini{k}{r} = \min\{k,r\}$ is a determinstic value.

\item The mean times until the first repair and first system failure are, respectively, $\E \left[ T_{\mini{k}{r}:n} \right]$ and $\E \left[ T_{\mini{k}{r}:n} \right] / p\r$.

\item The rate at which repairs, system failures, and components' failures occur, are $1 / \E \left[ T_{\mini{k}{r}:n} \right]$, $p\r / \E \left[ T_{\mini{k}{r}:n} \right]$ and $\sum_{j=1}^{n} \QQ{(\mini{k}{r})-1}{j} \, j / \E \left[ T_{\mini{k}{r}:n} \right]$, respectively.
\end{enumerate}
\end{proposition}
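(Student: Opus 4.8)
The plan is to obtain all four parts from a single renewal--reward argument; the proposition also follows from the general Theorem~\ref{theo} by taking the structural signature $\mathbf s = \mathbf e_k$, but the direct route is transparent here. The first step, and the main obstacle, is to establish the regeneration claim: under the stated policy the first repair occurs at the random time $T_{\mini{k}{r}:n}$, and immediately after repairing all failed components the system is stochastically identical to a brand-new one. For this I would argue that the first repair is triggered at $\min\{T_{r:n},T_{k:n}\}$ --- the earlier of the first time $r$ components have failed and the first time $k$ components have failed (the system failure) --- and, order statistics being nondecreasing in their index, this equals $T_{m:n}$ with $m:=\mini{k}{r}$. At that instant the components still working have residual triggers $\varepsilon_i-L_{T_{m:n}}$, which by the lack of memory of the exponential law are again i.i.d.\ standard exponential; the $N(T_{m:n})$ repaired components are given fresh i.i.d.\ standard exponential triggers; and since $T_{m:n}$ is a stopping time and $\mathbf L$ has stationary independent increments, the post-repair degradation $(L_{T_{m:n}+s}-L_{T_{m:n}}:s\ge 0)$ is a fresh copy of $\mathbf L$ independent of the past. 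Hence the successive inter-repair times $(L_i)_{i\ge 1}$, each distributed as $T_{m:n}$, together with the costs charged at the repairs, form an i.i.d.\ sequence with $\E[T_{m:n}]<\infty$ (finite by~\eqref{eq:ETkn}), so every long-run quantity can be evaluated via the renewal--reward theorem. Making this regeneration step airtight --- in particular the fact that repairing \emph{only} the failed components nevertheless yields an as-good-as-new system, which rests on memorylessness of the surviving triggers and the strong Markov property of the subordinator at $T_{m:n}$ --- is where the real work lies; the rest is bookkeeping with the quantities from Lemma~\ref{lemma1}.

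With this in hand, Part 1 is a short case split: the first repair is ``due to a system failure'' exactly on $\{N(T_{m:n})\ge k\}$. If $k\le r$ then $m=k$ and $N(T_{k:n})\ge k$ surely, so $p\r=1$; if $k>r$ then $m=r$, and by the duality $\{N(T_{r:n})\ge k\}=\{T_{r:n}\ge T_{k:n}\}=\{T_{r:n}=T_{k:n}\}$ (the last equality because $r<k$ forces $T_{r:n}\le T_{k:n}$), one gets $p\r=\P(T_{r:n}=T_{k:n})=\QQQ{r-1}{k}{n}$ from~\eqref{PTrnTkne}. For Parts 2 and 4 I would apply renewal--reward with cycle length $T_{m:n}$ and the relevant per-cycle reward. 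The cost accrued in a cycle is $\ccmp(N(T_{m:n}))+\csys\I{N(T_{m:n})\ge k}$; taking expectations and using $\P(N(T_{m:n})=j)=\QQ{m-1}{j}$ from~\eqref{PTkn eq j} (which vanishes for $j<m$) gives cycle cost $p\r\csys+\sum_{j=m}^{n}\QQ{m-1}{j}\ccmp(j)$, and dividing by $\E[T_{m:n}]$ yields the stated long-term mean cost. Taking instead the per-cycle reward to be $1$, $\I{N(T_{m:n})\ge k}$, and $N(T_{m:n})$ produces the repair rate $1/\E[T_{m:n}]$, the system-failure rate $p\r/\E[T_{m:n}]$, and the component-failure rate $\sum_{j=1}^{n} j\,\QQ{m-1}{j}/\E[T_{m:n}]$, respectively.

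Part 3 needs one extra observation: whenever the system fails inside a cycle, it fails precisely at the end of that cycle, since a failure at $T_{k:n}$ inside a cycle ending at $T_{m:n}\le T_{k:n}$ forces $T_{k:n}=T_{m:n}$. The time to the first repair is therefore $T_{m:n}$, with mean $\E[T_{m:n}]$. For the time $\sigma$ to the first system failure, conditioning on the first cycle gives $\sigma = T_{m:n}$ on $\{N(T_{m:n})\ge k\}$ and $\sigma = T_{m:n}+\sigma'$ otherwise, with $\sigma'$ an independent copy of $\sigma$ by the regeneration property; hence $\E[\sigma]=\E[T_{m:n}]+(1-p\r)\E[\sigma]$, so $\E[\sigma]=\E[T_{m:n}]/p\r$ (with the convention that this is $+\infty$ when $p\r=0$, i.e.\ when the system never fails under the policy). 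This completes the plan.
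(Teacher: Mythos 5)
Your proposal is correct, but it takes a different route from the paper for this particular statement. The paper proves Proposition~\ref{corollary} in one line, by specializing Theorem~\ref{theo} to the structural signature $\mathbf s = \mathbf e_k$ (the $k$-th canonical vector) and then reading off the answers via~\eqref{PTrnTknl}, \eqref{PTrnTkne} and~\eqref{PTkn eq j}; all the probabilistic work is delegated to the proof of the general theorem. You instead give a self-contained renewal--reward argument for the $k$-out-of-$n$:F case: you identify the first repair time directly as $T_{\mini{k}{r}:n}$, establish regeneration there via memorylessness of the surviving triggers and the strong Markov property of the subordinator, and then compute each quantity as a per-cycle reward divided by $\E[T_{\mini{k}{r}:n}]$. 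This is essentially the proof of Theorem~\ref{theo} with the Samaniego decomposition stripped out (it is trivial when $\mathbf s=\mathbf e_k$, since the cycle length is a single order statistic rather than a mixture), and your regeneration step is the same construction the paper formalizes with the sequences $\varepsilon_i[m]$ in Section~\ref{sec:proofs}. What your route buys is transparency and independence from the general theorem --- in particular the observation that a system failure inside a cycle can only occur at the cycle's endpoint (because $T_{\mini{k}{r}:n}\leq T_{k:n}$) makes Part~1 and Part~3 immediate; what the paper's route buys is brevity and the reassurance that the proposition is literally an instance of the general result. Two small points worth tidying: your notation $(L_i)_{i\ge1}$ for the inter-repair times collides with the subordinator $\mathbf L$; and in Part~3 the identity $\E[\sigma]=\E[T_{\mini{k}{r}:n}]+(1-p\r)\E[\sigma]$ should be solved only after noting $\E[\sigma]<\infty$ when $p\r>0$ (e.g.\ via Wald's identity applied to the geometric number of cycles, which is how the paper argues), to avoid an $\infty-\infty$ manipulation.
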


\subsection{System structure}\label{sec:structure}

We consider a \emph{binary system} with $n$ components, where each state of the system is represented with a vector $\mathbf x$ in $\{0,1\}^n$, where $x_i = 1$ if component $i$ is working and $0$ otherwise.
We assume that there is a deterministic so-called \emph{structure function} $\Phi : \{0,1\}^n \rightarrow \{0,1\}$, where for a state $\mathbf x$ in $\{0,1\}^n$ we have $\Phi(\mathbf x)=1$ iff the system is working in state $\mathbf x$.


Throughout this work, we consider that the system structure $\Phi$ is \emph{semi-coherent}, defined as follows; for further details see~\cite{navarro2021introduction}.

\begin{definition}[Monotone, semi-coherent and coherent systems]
Consider a system with $n>1$ components and structure function $\Phi : \{0,1\}^n \to \{0,1\}$.
\begin{enumerate}
    \item The system is \emph{monotone} iff for all states $\mathbf x$ and $\mathbf y$ in $\{0, 1\}^n$ such that $\mathbf x \leq \mathbf y$ we have $\Phi( \mathbf x) \leq \Phi( \mathbf y)$.
    \item The system is \emph{semi-coherent} iff it is monotone with $\Phi( \mathbf 0) = 0$ and $\Phi( \mathbf 1) = 1$.
    \item The component $i$ is an \emph{irrelevant component} iff for all states $\mathbf x \in \{0,1\}^n$, we have $\Phi( \mathbf x \rvert_{x_i=0}) = \Phi(\mathbf x \rvert_{x_i=1})$, where $ \mathbf x \rvert_{x_i=y}$ denotes the state $\mathbf x$ but with $x_i$ replaced by $y$.
    \item The system \emph{has no irrelevant components} iff none of its components is irrelevant.	
    \item The system is \emph{coherent} iff it is monotone and has no irrelevant components.
\end{enumerate}
\end{definition}

Note that a semi-coherent system is just a non-trivial monotone system, in the sense that it excludes the two trivial cases of $\Phi( \mathbf x)=0$ for all $\mathbf x$ (an always-failed system), or $\Phi( \mathbf x)=1$ for all $\mathbf x$ (an always-working system). 

We now consider the \emph{signature} of a system, a key idea in the study of system reliability, see~\cite{samaniego2007system}.
The concept of signature was initially introduced by Samaniego in~\cite{samaniego1985closure} for coherent systems with continuously distributed i.i.d.~lifetimes of components, however here we give the more general definition of \emph{structural signature} given in~\cite[Definition~2.1]{navarro2021introduction}.

\begin{definition}[Structural signature]\label{def:signature}
For a binary system with $n$ components and structure function $\Phi$, we define its \emph{structural signature} $\mathbf s$ in $\R^n$ as
\begin{align}\label{def:s}
s_k := \overline S_{k-1} - \overline S_{k}, \qquad k = 1, \ldots, n,
\end{align}
where, for $k=0, \, \ldots, \, n$, 
\begin{align}\label{def:S}
\overline S_k := & \frac{1}{ \binom{n}{n-k} } \sum_{\mathbf x \in \{0,1\}^n, \lvert \mathbf x \rvert=n-k} \Phi(\mathbf x) 
\end{align}
is the proportion of states, among the total number of states with exactly $k$ failed components, i.e.~$\# \{\mathbf x \in \{0,1\}^n : \lvert \mathbf x \rvert=n-k \} = \binom{n}{n-k}$, that continue working despite having these $k$ failed components, i.e.~$\# \{\mathbf x \in \{0,1\}^n : \lvert \mathbf x \rvert=n-k, \ \Phi(\mathbf x)=1 \} = \sum_{\mathbf x \in \{0,1\}^n, \lvert \mathbf  x \rvert=n-k} \Phi(\mathbf x)$.
\end{definition}

The following result remarks that a signature vector $\mathbf s$ can also be seen as a proportion of sequences of failures of the $n$ components, with the caveat that components fail one-by-one, i.e., not simultaneously.
The result was already known, see e.g.~\cite{navarro2010joint}, but for completeness, we provide the result and its proof here.

\begin{proposition}\label{prop:signature}
Consider the setting of Definition~\ref{def:signature} and assume that the system is semi-coherent.
Then $s_k$ corresponds to the proportion of sequences of the $n$ components ---when they fail separately one by one--- where the system fails at exactly the $k$-th failure.
Similarly, $\overline S_{k}$ is the proportion of sequences of the $n$ components where the system continues working after the $k$-th failure, when the components fail separately one by one.
\end{proposition}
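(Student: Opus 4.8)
The plan is to set up a bijection-counting argument linking the two descriptions of $s_k$: the averaged-proportion definition~\eqref{def:s}--\eqref{def:S} on one side, and the proportion of one-by-one failure sequences on the other. First I would introduce the sample space of all $n!$ orderings $\pi = (\pi(1), \ldots, \pi(n))$ of the components, each ordering representing a scenario in which component $\pi(1)$ fails first, then $\pi(2)$, and so on. For a fixed ordering $\pi$, define $k(\pi)$ to be the index of the failure at which the system first goes down, i.e., the smallest $k$ such that the state with components $\pi(1), \ldots, \pi(k)$ failed (and the rest working) has $\Phi = 0$. By semi-coherence and monotonicity, $\Phi = 1$ when no component has failed and $\Phi = 0$ when all have failed, so $k(\pi)$ is well-defined and lies in $\{1, \ldots, n\}$; moreover $\Phi$ evaluated along the decreasing chain determined by $\pi$ is nonincreasing, so once it hits $0$ it stays $0$. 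The quantity to identify is then $\#\{\pi : k(\pi) = k\} / n!$, and I would show it equals $s_k$.

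The key computation is to count, for each $k$, the number of orderings $\pi$ such that the system is still working after $k-1$ failures but failed after $k$ failures along $\pi$. I would first compute $\#\{\pi : k(\pi) > k\}$, the number of orderings for which the system is still up after the first $k$ failures. Because $\Phi$ along the chain is nonincreasing, $k(\pi) > k$ holds iff $\Phi(\mathbf x) = 1$ for the state $\mathbf x$ with exactly the components $\pi(1), \ldots, \pi(k)$ failed. Grouping orderings by the \emph{set} $F = \{\pi(1), \ldots, \pi(k)\}$ of the first $k$ failed components: each subset $F$ of size $k$ arises from exactly $k! \, (n-k)!$ orderings (the $k!$ ways to order the elements of $F$ as the first failures, times the $(n-k)!$ ways to order the rest). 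Hence
\begin{align*}
\#\{\pi : k(\pi) > k\}
= k! \,(n-k)! \sum_{\substack{\mathbf x \in \{0,1\}^n \\ |\mathbf x| = n-k}} \Phi(\mathbf x)
= k! \,(n-k)! \binom{n}{n-k} \overline S_k
= n! \, \overline S_k,
\end{align*}
using Definition~\ref{def:signature}. Dividing by $n!$, the proportion of orderings along which the system is still working after the $k$-th failure is exactly $\overline S_k$, which proves the second assertion of the proposition. The first assertion then follows by differencing: the proportion of orderings with $k(\pi) = k$ is $\#\{\pi : k(\pi) > k-1\}/n! - \#\{\pi : k(\pi) > k\}/n! = \overline S_{k-1} - \overline S_k = s_k$ by~\eqref{def:s}, with the convention $\overline S_0 = 1$ (which holds since $\Phi(\mathbf 1) = 1$).

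The main obstacle—really the only subtle point—is justifying that ``the system fails at exactly the $k$-th failure along $\pi$'' is equivalent to the single-state condition ``$\Phi = 1$ on the state with $\pi(1), \ldots, \pi(k-1)$ failed and $\Phi = 0$ on the state with $\pi(1), \ldots, \pi(k)$ failed,'' without having to track the whole history. This is exactly where monotonicity is used: the states visited along $\pi$ form a decreasing chain $\mathbf 1 = \mathbf x^{(0)} \geq \mathbf x^{(1)} \geq \cdots \geq \mathbf x^{(n)} = \mathbf 0$ in $\{0,1\}^n$, so $\Phi(\mathbf x^{(0)}) \geq \Phi(\mathbf x^{(1)}) \geq \cdots \geq \Phi(\mathbf x^{(n)})$, i.e., the indicator of ``system working'' is a nonincreasing $\{1,0\}$-valued sequence that starts at $1$ and ends at $0$; such a sequence has a unique ``first $0$'' position, which is $k(\pi)$. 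I would state this observation explicitly as the first step, since it is what makes the grouping-by-subset count legitimate. Everything else is the routine binomial bookkeeping shown above.
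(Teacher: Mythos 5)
Your proof is correct and follows essentially the same route as the paper's: group the $n!$ orderings by the set of the first $k$ failed components to get $\#\{\pi : k(\pi)>k\} = k!\,(n-k)!\sum_{|\mathbf x|=n-k}\Phi(\mathbf x) = n!\,\overline S_k$, then difference consecutive terms using monotonicity. Your explicit remark that monotonicity makes the indicator of ``system working'' nonincreasing along the failure chain is the same justification the paper gives via the nestedness of the sets of working states.
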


\begin{figure}[hbtp]
\centering

\begin{tikzpicture}[
    box/.style={rectangle, draw, minimum width=1.2cm, minimum height=2.5cm},
    highlight/.style={fill=red!60, opacity=0.7},
    component/.style={draw, circle, inner sep=1pt, minimum size=0.6cm},
    system/.style={minimum width=2.5cm, minimum height=2.5cm, align=center}
]

\node[system] (sys111) at (6.5, 1.5) {}; 
\node[component] (c1_1) at (sys111.north) [xshift=-0.5cm, yshift=-0.5cm] {1};
\node[component] (c1_2) at (sys111.north) [xshift= 0.5cm, yshift=-0.5cm] {2};
\node[component] (c1_3) at (sys111.south) [yshift= 0.5cm] {3};
\draw (c1_1.east) -- (c1_2.west);
\draw (sys111.east) -| +(-.3,0) |- (c1_2.east);
\draw (sys111.east) -| +(-.3,0) |- (c1_3.east);
\draw (sys111.west) -| +(.3,0) |- (c1_3.west);
\draw (sys111.west) -| +(.3,0) |- (c1_1.west);

\node[rotate=90, anchor=center] at (-2.2, 1.6) {
    \footnotesize{
    \begin{tabular}{c}
    All $3!=6$ possible sequences \\
    of failure times without ties
    \end{tabular}
    }
    };

\node[box] (box1) at (-1,1.5) {};
\node[box] (box2) at (1,1.5) {};
\node[box] (box3) at (3,1.5) {};

\node at (-1,2.5) {$T_1$};
\node at (-1,2.1) {$T_1$};
\node at (-1,1.7) {$T_2$};
\node at (-1,1.3) {$T_2$};
\node at (-1,0.9) {$T_3$};
\node at (-1,0.5) {$T_3$};

\node at (1,2.5) {$T_2$};
\node[highlight] at (1,2.1) {$T_3$};
\node at (1,1.7) {$T_1$};
\node[highlight, inner sep=2pt] at (1,1.3) {$T_3$};
\node[highlight, inner sep=2pt] at (1,0.9) {$T_1$};
\node[highlight, inner sep=2pt] at (1,0.5) {$T_2$};

\node[highlight, inner sep=2pt] at (3,2.5) {$T_3$};
\node at (3,2.1) {$T_2$};
\node[highlight, inner sep=2pt] at (3,1.7) {$T_3$};
\node at (3,1.3) {$T_1$};
\node at (3,0.9) {$T_2$};
\node at (3,0.5) {$T_1$};

\foreach \y in {0.5, 0.9, 1.3, 1.7, 2.1, 2.5} {
    \node at (0,\y) {$<$};
    \node at (2,\y) {$<$};
}

\draw[thick, ->] (-1, .25) -- (-1, -.3);
\draw[thick, ->] ( 1, .25) -- ( 1, -.3);
\draw[thick, ->] ( 3, .25) -- ( 3, -.3);

\node at (-2,-1) {$s = ($};
\node at (-1,-1) {$\frac{0}{6}$};
\node at (0,-1.1) {$,$};
\node at (1,-1) {$\frac{4}{6}$};
\node at (2,-1.1) {$,$};
\node at (3,-1) {$\frac{2}{6}$};
\node at (3.7,-1) {$)$};

\end{tikzpicture}

\caption{We show how to compute the signature vector $s=(s_1, \ s_2, \ s_3)$ for the system in the right that has $n=3$ components. According to Proposition~\ref{prop:signature}, $s_k$ is the proportion, between the total number of $n!=3!=6$ sequences of failures without ties, of sequences where the system first fails at the $k$-th failure. In the figure, the red background highlights, for each sequence, the first time at which the system fails. Hence, $s_2 = 4/6$ because the system first fails at the second failure in 4 of the 6 sequences.}
\label{fig:signature}
\end{figure}
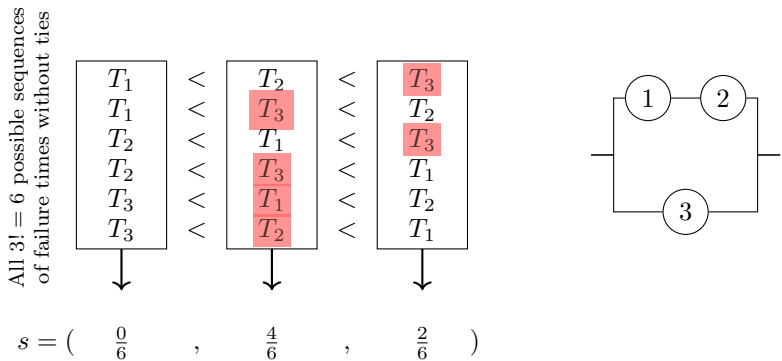

We remark that Definition~\ref{def:signature} is made in terms of proportion \emph{of states}, while Proposition~\ref{prop:signature} of proportion \emph{of sequences}.
Computationally, it is more efficient to compute the signature vector $\mathbf s$ using the former than the latter, since Definition~\ref{def:signature} requires evaluating $\mathcal{O}(2^n)$ times the structure function $\Phi$, whereas Proposition~\ref{prop:signature}, $\mathcal{O}(n \cdot n!) \sim \mathcal{O}((n/e)^{n+3/2})$ times.
Nonetheless, Proposition~\ref{prop:signature} gives a better intuition of the values in the signature; see Figure~\ref{fig:signature} for a simple example. 

\subsection{Semi-coherent systems with LFMO components}\label{sec:coherent}

We focus now on semi-coherent systems whose components' lifetimes follow a LFMO distribution.
Denote from now on $\Tsys$ as the time when the system fails.


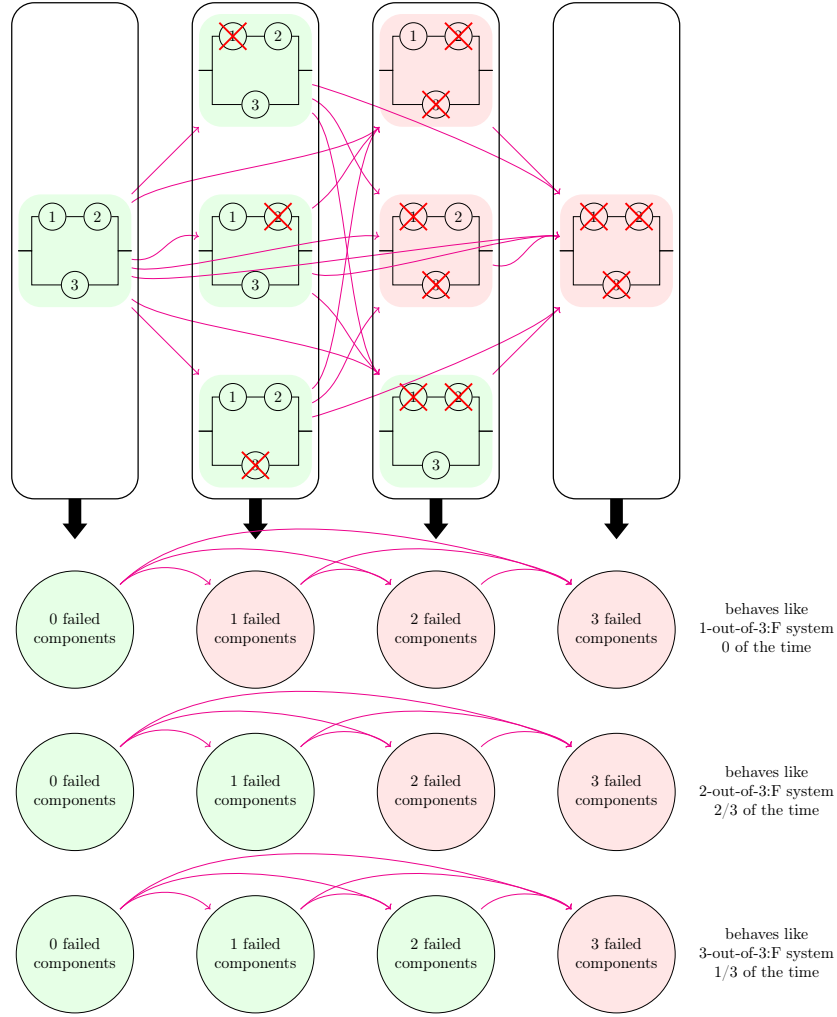
\begin{figure}
\centering
\scalebox{0.6}{
\begin{tikzpicture}[
    component/.style={draw, circle, inner sep=1pt, minimum size=0.6cm},
    system/.style={fill=green!10, rounded corners=5mm, minimum width=2.5cm, minimum height=2.5cm, align=center},
    failed_system/.style={fill=red!10, rounded corners=5mm, minimum width=2.5cm, minimum height=2.5cm, align=center},
    cross out/.pic={
        \draw[red, very thick] (-0.3, -0.3) -- (0.3, 0.3);
        \draw[red, very thick] (-0.3, 0.3) -- (0.3, -0.3);
    }
]

\node[system] (sys111) at (0, -4) {};
\node[component] (c1_1) at (sys111.north) [xshift=-0.5cm, yshift=-0.5cm] {1};
\node[component] (c1_2) at (sys111.north) [xshift= 0.5cm, yshift=-0.5cm] {2};
\node[component] (c1_3) at (sys111.south) [yshift= 0.5cm] {3};
\draw (c1_1.east) -- (c1_2.west);
\draw (sys111.east) -| +(-.3,0) |- (c1_2.east);
\draw (sys111.east) -| +(-.3,0) |- (c1_3.east);
\draw (sys111.west) -| +(.3,0) |- (c1_3.west);
\draw (sys111.west) -| +(.3,0) |- (c1_1.west);

\node[system] (sys011) at (4, 0) {};
\node[component] (c2_1) at (sys011.north) [xshift=-0.5cm, yshift=-0.5cm] {1};
\node[component] (c2_2) at (sys011.north) [xshift= 0.5cm, yshift=-0.5cm] {2};
\node[component] (c2_3) at (sys011.south) [yshift= 0.5cm] {3};
\draw (c2_1.east) -- (c2_2.west);
\draw (sys011.east) -| +(-.3,0) |- (c2_2.east);
\draw (sys011.east) -| +(-.3,0) |- (c2_3.east);
\draw (sys011.west) -| +( .3,0) |- (c2_3.west);
\draw (sys011.west) -| +( .3,0) |- (c2_1.west);
\pic at (c2_1) {cross out};

\node[failed_system] (sys100) at (8, -0) {};
\node[component] (c9_1) at (sys100.north) [xshift=-0.5cm, yshift=-0.5cm] {1};
\node[component] (c9_2) at (sys100.north) [xshift= 0.5cm, yshift=-0.5cm] {2};
\node[component] (c9_3) at (sys100.south) [yshift= 0.5cm] {3};
\draw (c9_1.east) -- (c9_2.west);
\draw (sys100.east) -| +(-.3,0) |- (c9_2.east);
\draw (sys100.east) -| +(-.3,0) |- (c9_3.east);
\draw (sys100.west) -| +( .3,0) |- (c9_3.west);
\draw (sys100.west) -| +( .3,0) |- (c9_1.west);
\pic at (c9_2) {cross out};
\pic at (c9_3) {cross out};

\node[failed_system] (sys000) at (12, -4) {};
\node[component] (c4_1) at (sys000.north) [xshift=-0.5cm, yshift=-0.5cm] {1};
\node[component] (c4_2) at (sys000.north) [xshift= 0.5cm, yshift=-0.5cm] {2};
\node[component] (c4_3) at (sys000.south) [yshift= 0.5cm] {3};
\draw (c4_1.east) -- (c4_2.west);
\draw (sys000.east) -| +(-.3,0) |- (c4_2.east);
\draw (sys000.east) -| +(-.3,0) |- (c4_3.east);
\draw (sys000.west) -| +( .3,0) |- (c4_3.west);
\draw (sys000.west) -| +( .3,0) |- (c4_1.west);
\pic at (c4_1) {cross out};
\pic at (c4_2) {cross out};
\pic at (c4_3) {cross out};

\node[system] (sys101) at (4, -4) {};
\node[component] (c5_1) at (sys101.north) [xshift=-0.5cm, yshift=-0.5cm] {1};
\node[component] (c5_2) at (sys101.north) [xshift= 0.5cm, yshift=-0.5cm] {2};
\node[component] (c5_3) at (sys101.south) [yshift= 0.5cm] {3};
\draw (c5_1.east) -- (c5_2.west);
\draw (sys101.east) -| +(-.3,0) |- (c5_2.east);
\draw (sys101.east) -| +(-.3,0) |- (c5_3.east);
\draw (sys101.west) -| +( .3,0) |- (c5_3.west);
\draw (sys101.west) -| +( .3,0) |- (c5_1.west);
\pic at (c5_2) {cross out};

\node[failed_system] (sys010) at (8, -4) {};
\node[component] (c6_1) at (sys010.north) [xshift=-0.5cm, yshift=-0.5cm] {1};
\node[component] (c6_2) at (sys010.north) [xshift= 0.5cm, yshift=-0.5cm] {2};
\node[component] (c6_3) at (sys010.south) [yshift= 0.5cm] {3};
\draw (c6_1.east) -- (c6_2.west);
\draw (sys010.east) -| +(-.3,0) |- (c6_2.east);
\draw (sys010.east) -| +(-.3,0) |- (c6_3.east);
\draw (sys010.west) -| +( .3,0) |- (c6_3.west);
\draw (sys010.west) -| +( .3,0) |- (c6_1.west);
\pic at (c6_1) {cross out};
\pic at (c6_3) {cross out};

\node[system] (sys001) at (8, -8) {};
\node[component] (c7_1) at (sys001.north) [xshift=-0.5cm, yshift=-0.5cm] {1};
\node[component] (c7_2) at (sys001.north) [xshift= 0.5cm, yshift=-0.5cm] {2};
\node[component] (c7_3) at (sys001.south) [yshift= 0.5cm] {3};
\draw (c7_1.east) -- (c7_2.west);
\draw (sys001.east) -| +(-.3,0) |- (c7_2.east);
\draw (sys001.east) -| +(-.3,0) |- (c7_3.east);
\draw (sys001.west) -| +( .3,0) |- (c7_3.west);
\draw (sys001.west) -| +( .3,0) |- (c7_1.west);
\pic at (c7_1) {cross out};
\pic at (c7_2) {cross out};

\node[system] (sys110) at (4, -8) {};
\node[component] (c8_1) at (sys110.north) [xshift=-0.5cm, yshift=-0.5cm] {1};
\node[component] (c8_2) at (sys110.north) [xshift= 0.5cm, yshift=-0.5cm] {2};
\node[component] (c8_3) at (sys110.south) [yshift= 0.5cm] {3};
\draw (c8_1.east) -- (c8_2.west);
\draw (sys110.east) -| +(-.3,0) |- (c8_2.east);
\draw (sys110.east) -| +(-.3,0) |- (c8_3.east);
\draw (sys110.west) -| +( .3,0) |- (c8_3.west);
\draw (sys110.west) -| +( .3,0) |- (c8_1.west);
\pic at (c8_3) {cross out};


\draw[->, magenta] (sys111) .. controls +(2,2) and +(-2,-2) .. (sys011);
\draw[->, magenta] (sys111) .. controls +(2,-.3) and +(-2,.5) .. (sys101);
\draw[->, magenta] (sys111) .. controls +(2,-2) and +(-2,2) .. (sys110);
\draw[->, magenta] (sys111) .. controls +(2,1.7) and +(-2,-2) .. (sys100);
\draw[->, magenta] (sys111) .. controls +(2,-.6) and +(-2,.5) .. (sys010);
\draw[->, magenta] (sys111) .. controls +(2,-1.7) and +(-2,2) .. (sys001);
\draw[->, magenta] (sys111) .. controls +(2,-.9) and +(-2,.5) .. (sys000);

\draw[->, magenta] (sys011) .. controls +(2,-0.5) and +(-2,2) .. (sys000);
\draw[->, magenta] (sys011) .. controls +(2,-1.0) and +(-2,2) .. (sys010);
\draw[->, magenta] (sys011) .. controls +(2,-1.5) and +(-2,2) .. (sys001);

\draw[->, magenta] (sys110) .. controls +(2,1.5) and +(-2,-2) .. (sys100);
\draw[->, magenta] (sys110) .. controls +(2,1.0) and +(-2,-2) .. (sys010);
\draw[->, magenta] (sys110) .. controls +(2,0.5) and +(-2,-2) .. (sys000);

\draw[->, magenta] (sys101) .. controls +(2,1.5) and +(-2,-2) .. (sys100);
\draw[->, magenta] (sys101) .. controls +(2,-1.5) and +(-2,2) .. (sys001);
\draw[->, magenta] (sys101) .. controls +(2,-0.8) and +(-2,0.5) .. (sys000);

\draw[->, magenta] (sys100) .. controls +(2,-2) and +(-2,2) .. (sys000);
\draw[->, magenta] (sys010) .. controls +(2,-.5) and +(-2,.5) .. (sys000);
\draw[->, magenta] (sys001) .. controls +(2,2) and +(-2,-2) .. (sys000);


\draw [thick, rounded corners=.5cm] (   -1.4, 1.5) -- (   -1.4, -9.5) -- (   1.4, -9.5) -- (   1.4, 1.5) -- cycle;
\draw [thick, rounded corners=.5cm] ( -1.4+4, 1.5) -- ( -1.4+4, -9.5) -- ( 1.4+4, -9.5) -- ( 1.4+4, 1.5) -- cycle;
\draw [thick, rounded corners=.5cm] ( -1.4+8, 1.5) -- ( -1.4+8, -9.5) -- ( 1.4+8, -9.5) -- ( 1.4+8, 1.5) -- cycle;
\draw [thick, rounded corners=.5cm] (-1.4+12, 1.5) -- (-1.4+12, -9.5) -- (1.4+12, -9.5) -- (1.4+12, 1.5) -- cycle;

\draw[line width=8pt, -{Triangle[width=15pt,length=10pt]}] (0,-9.5) -- (0,-9.5-.9);
\draw[line width=8pt, -{Triangle[width=15pt,length=10pt]}] (4,-9.5) -- (4,-9.5-.9);
\draw[line width=8pt, -{Triangle[width=15pt,length=10pt]}] (8,-9.5) -- (8,-9.5-.9);
\draw[line width=8pt, -{Triangle[width=15pt,length=10pt]}] (12,-9.5) -- (12,-9.5-.9);


\draw[fill=green!10] (0,-12-0.4) circle (1.3cm) node {\begin{tabular}{c} 0 failed \\ components \end{tabular}} ;
\draw[fill=red!10] (4,-12-0.4) circle (1.3cm) node {\begin{tabular}{c} 1 failed \\ components \end{tabular}} ;
\draw[fill=red!10] (8,-12-0.4) circle (1.3cm) node {\begin{tabular}{c} 2 failed \\ components \end{tabular}} ;
\draw[fill=red!10]   (12,-12-0.4) circle (1.3cm) node {\begin{tabular}{c} 3 failed \\ components \end{tabular}} ;
\draw[->, magenta]   (0+1,-12+1-0.4) .. controls +(.5,.5) and +(-.5,.5) .. (4-1,-12+1-0.4);
\draw[->, magenta]   (0+1,-12+1-0.4) .. controls +(1.5,1.5) and +(-.5,.5) .. (8-1,-12+1-0.4);
\draw[->, magenta]   (0+1,-12+1-0.4) .. controls +(2.5,2.5) and +(-.5,.5) .. (12-1,-12+1-0.4);

\draw[->, magenta] (4+1,-12+1-0.4) .. controls +(.5,.5) and +(-.4,.4) .. (8-1,-12+1-0.4);
\draw[->, magenta] (4+1,-12+1-0.4) .. controls +(1.5,1.5) and +(-.5,.5) .. (12-1,-12+1-0.4);

\draw[->, magenta] (8+1,-12+1-0.4) .. controls +(.5,.5) and +(-.4,.4) .. (12-1,-12+1-0.4);

\node[anchor=west] at (12+1.5,-12-0.4) {\begin{tabular}{c} behaves like \\ 1-out-of-3:F system \\ 0 of the time \end{tabular}} ;


\draw[fill=green!10] (0,-12-4) circle (1.3cm) node {\begin{tabular}{c} 0 failed \\ components \end{tabular}} ;
\draw[fill=green!10] (4,-12-4) circle (1.3cm) node {\begin{tabular}{c} 1 failed \\ components \end{tabular}} ;
\draw[fill=red!10] (8,-12-4) circle (1.3cm) node {\begin{tabular}{c} 2 failed \\ components \end{tabular}} ;
\draw[fill=red!10]   (12,-12-4) circle (1.3cm) node {\begin{tabular}{c} 3 failed \\ components \end{tabular}} ;
\draw[->, magenta]   (0+1,-12+1-4) .. controls +(.5,.5) and +(-.5,.5) .. (4-1,-12+1-4);
\draw[->, magenta]   (0+1,-12+1-4) .. controls +(1.5,1.5) and +(-.5,.5) .. (8-1,-12+1-4);
\draw[->, magenta]   (0+1,-12+1-4) .. controls +(2.5,2.5) and +(-.5,.5) .. (12-1,-12+1-4);

\draw[->, magenta] (4+1,-12+1-4) .. controls +(.5,.5) and +(-.4,.4) .. (8-1,-12+1-4);
\draw[->, magenta] (4+1,-12+1-4) .. controls +(1.5,1.5) and +(-.5,.5) .. (12-1,-12+1-4);

\draw[->, magenta] (8+1,-12+1-4) .. controls +(.5,.5) and +(-.4,.4) .. (12-1,-12+1-4);

\node[anchor=west] at (12+1.5,-12-4) {\begin{tabular}{c} behaves like \\ 2-out-of-3:F system \\ 2/3 of the time \end{tabular}} ;


\draw[fill=green!10] (0,-12-7.6) circle (1.3cm) node {\begin{tabular}{c} 0 failed \\ components \end{tabular}} ;
\draw[fill=green!10] (4,-12-7.6) circle (1.3cm) node {\begin{tabular}{c} 1 failed \\ components \end{tabular}} ;
\draw[fill=green!10] (8,-12-7.6) circle (1.3cm) node {\begin{tabular}{c} 2 failed \\ components \end{tabular}} ;
\draw[fill=red!10]   (12,-12-7.6) circle (1.3cm) node {\begin{tabular}{c} 3 failed \\ components \end{tabular}} ;
\draw[->, magenta]   (0+1,-12+1-7.6) .. controls +(.5,.5) and +(-.5,.5) .. (4-1,-12+1-7.6);
\draw[->, magenta]   (0+1,-12+1-7.6) .. controls +(1.5,1.5) and +(-.5,.5) .. (8-1,-12+1-7.6);
\draw[->, magenta]   (0+1,-12+1-7.6) .. controls +(2.5,2.5) and +(-.5,.5) .. (12-1,-12+1-7.6);

\draw[->, magenta] (4+1,-12+1-7.6) .. controls +(.5,.5) and +(-.4,.4) .. (8-1,-12+1-7.6);
\draw[->, magenta] (4+1,-12+1-7.6) .. controls +(1.5,1.5) and +(-.5,.5) .. (12-1,-12+1-7.6);

\draw[->, magenta] (8+1,-12+1-7.6) .. controls +(.5,.5) and +(-.4,.4) .. (12-1,-12+1-7.6);

\node[anchor=west] at (12+1.5,-12-7.6) {\begin{tabular}{c} behaves like \\ 3-out-of-3:F system \\ 1/3 of the time \end{tabular}} ;


\end{tikzpicture}
}%
\caption{
    The classical Samaniego decomposition result $\P(\Tsys > t) = \sum_{k=1}^n s_k \, \P( T_{k:n} > t)$ in~\eqref{eq:samaniego} establishes that, probabilistically, the system failure time $\Tsys$ behaves, a proportion $s_k$ of the times, as the $k$-th failure time $T_{k:n}$, which is actually the system failure time of a $k$-out-of-$n$:F system. In the upper-half of the figure, we show a system with $n=3$ components whose signature vector $s=(0, 2/3, 1/3)$ was computed in Figure~\ref{fig:signature}. The Samaniego decomposition result allows us to group the states of the system according to the number of failed components (in the lower-half of the figure), and analyze the $n=3$ $k$-out-of-$n$:F systems (each with only $n+1=4$ states), instead of analyzing the original system with $2^n=8$ states, $3^n-2^n=19$ transitions, and $2^{2^n-1}=128$ repair policies (see the caption of Figure~\ref{fig:basic}). This is a significative reduction in the complexity of the analysis.
}
\label{fig:samaniego}

\end{figure}

We are inspired by the following classical signature decomposition result by Samaniego
\begin{align}\label{eq:samaniego}
\P(\Tsys > t) = \sum_{k=1}^n s_k \, \P( T_{k:n} > t)
\end{align}
for all $t \geq 0$; see Figure~\ref{fig:samaniego} for an intuitive interpretation of this result.
It was first shown in~\cite{samaniego1985closure} for coherent systems with continuously distributed i.i.d.~lifetimes of components, however, it holds for general coherent systems with exchangeable distribution of the components' failure times $(T_1,\dots,T_n)$; see \cite{navarro2008application} or \cite[p.~37]{navarro2021introduction}.
In our particular case, of semi-coherent systems with LFMO lifetimes of components, the decomposition~\eqref{eq:samaniego} is a corollary of Lemma~\ref{lemma:samaniego} below.
Furthermore, in this case, we have that
\begin{align}\label{MTTFa}
\P(\Tsys > t) = \sum_{i=1}^n a_i e^{-\Psi(i) t}
\qquad \text{and} \qquad
\E [ \Tsys ] = \sum_{i=1}^n \frac {a_i}{\Psi(i)},
\end{align}
where the values $a_i := \binom{n}{i} \sum_{k=n-i+1}^n \binom{i-1}{n-k}(-1)^{(i-1)-(n-k)} s_k$ for $i=1,\ldots,n$ correspond to the so-called \emph{minimal} signature of the system, consisting of integer values (not necessarily positive) that sum one; see~\cite{navarro2007properties}.
This comes from the fact that $T_{1:i}$ has an exponential distribution with mean $1/\Psi(i)$.

We claim that, in a way, the main results of this paper, Proposition~\ref{proposition1} and Theorem~\ref{theo}, can be seen as extensions of decomposition~\eqref{eq:samaniego} for the repair policies we propose in Section~\ref{sec:main}.
Indeed, in Section~\ref{sec:signature} we show that a similar decomposition to~\eqref{eq:samaniego} can be made for: the probability $\P( \Trep\r = \Tsys\r )$ in~\eqref{eq:samaniego 1} that the first repair is due to a system failure;
the complementary probability $\P ( \Trep\r < \Tsys\r )$ in~\eqref{eq:samaniego 2};
the probability $\P ( N(\Trep\r) = j )$ in~\eqref{eq:samaniego 3} of the number of failed components at the time of the first repair;
and for the expected cumulative cost until the first repair time $\E \, C[0,\Trep\r]$ in~\eqref{eq:samaniego 4}.

The following result shows the distribution of the number of failed components when the system fails.
\begin{proposition}\label{proposition1}
Consider a semi-coherent system with $n>1$ components and structural signature vector $\mathbf s$ in $\R^n$, and assume that the components' lifetimes follow a LFMO distribution with Laplace exponent function $\Psi$. 
Consider the number $\Nc(t)$ of failed components at time $t$ and assume that $N(0)=0$.
It holds that for any $j=1, \ldots, n$, 
\begin{align}\label{def:w}
\P(N(\Tsys) = j) = \sum_{k=1}^j s_k \, \P \left( N\left( T_{k:n} \right) = j \right) = \sum_{k=1}^j s_k \, \QQ{k-1}{j}.
\end{align}
\end{proposition}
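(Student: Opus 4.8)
The plan is to realise the system failure time $\Tsys$ as an order statistic $T_{K:n}$ for a random index $K$ that is independent of the whole failure‑counting process and whose distribution is the structural signature $\mathbf s$; this is a pathwise refinement of Samaniego's identity~\eqref{eq:samaniego}, and once it is in hand the claimed formula drops out by conditioning on $K$. I would work on the probability space carrying the L\'evy subordinator $\mathbf L$ and the iid standard exponential triggers $\varepsilon_1,\dots,\varepsilon_n$ of Definition~\ref{def:LFMO}. Almost surely the $\varepsilon_i$ have no ties, so there is a unique random permutation $\sigma$ of $\{1,\dots,n\}$ with $\varepsilon_{\sigma(1)}<\dots<\varepsilon_{\sigma(n)}$, namely the order in which the components fail. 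By the classical fact about iid continuous variables, $\sigma$ is uniform on the symmetric group and independent of the order statistics $(\varepsilon_{\sigma(1)},\dots,\varepsilon_{\sigma(n)})$; since $\mathbf L$ is independent of all the $\varepsilon_i$, this makes $\sigma$ independent of the pair $\big(\mathbf L,(\varepsilon_{\sigma(j)})_{j}\big)$, hence of every functional of it, in particular of the process $N=(N(t):t\ge 0)$ and of the order statistics $T_{1:n}\le\dots\le T_{n:n}$.

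Next I would record the pathwise identity that, for each $t$, the failed set $\{i:L_t\ge\varepsilon_i\}$ equals $\{\sigma(1),\dots,\sigma(N(t))\}$: the failed components are always the first $N(t)$ entries of the failure order $\sigma$. Define
$$K:=\min\{\ell\ge1:\ \Phi=0\text{ in the state where exactly the components }\sigma(1),\dots,\sigma(\ell)\text{ have failed}\},$$
which is well defined because $\Phi(\mathbf 0)=0$. Monotonicity of $\Phi$ makes that state non‑working for every $\ell\ge K$ and working for every $\ell<K$; combined with the pathwise identity and with $N(\cdot)$ being nondecreasing, this yields $\Tsys=\inf\{t:N(t)\ge K\}=T_{K:n}$ almost surely. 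Since $K=K(\sigma)$ depends only on the failure order, Proposition~\ref{prop:signature} (reading ``sequence of failures'' as the permutation $\sigma$) gives $\P(K=k)=s_k$ for $k=1,\dots,n$, and $K$ inherits from $\sigma$ its independence of $N$ and of all the $T_{k:n}$.

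The last step is to condition on $K$: using $\Tsys=T_{K:n}$ together with the independence of $K$ from $N(T_{k:n})$,
\begin{align*}
\P\big(N(\Tsys)=j\big)=\P\big(N(T_{K:n})=j\big)=\sum_{k=1}^{n}\P(K=k)\,\P\big(N(T_{k:n})=j\big)=\sum_{k=1}^{n}s_k\,\P\big(N(T_{k:n})=j\big).
\end{align*}
Because $T_{k:n}$ is the instant of the $k$‑th component failure, $N(T_{k:n})\ge k$, so the terms with $k>j$ vanish, and for $k\le j$ equation~\eqref{PTkn eq j} of Lemma~\ref{lemma1} (with $r\mapsto k$ and $k\mapsto j$) gives $\P(N(T_{k:n})=j)=\QQ{k-1}{j}$, which establishes both equalities in~\eqref{def:w}. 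I expect the only genuinely delicate point to be the independence bookkeeping in the first paragraph --- that the order in which components fail is uniformly random and independent of the number‑of‑failures process --- since this is exactly what licenses splitting the probability over $k$; the monotonicity argument identifying $\Tsys$ with $T_{K:n}$ is then routine and is precisely the pathwise content of~\eqref{eq:samaniego}.
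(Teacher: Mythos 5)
Your proof is correct and follows essentially the same route as the paper: the paper proves Proposition~\ref{proposition1} by writing $\{N(\Tsys)=j\}=\{\Tsys=T_{j:n}<T_{j+1:n}\}$ and invoking Lemma~\ref{lemma:samaniego}, whose own proof is exactly your construction --- the uniform failure-order permutation $\sigma$ of the triggers, the index $K$ with $\Tsys=T_{K:n}$ and $\P(K=k)=s_k$, and the independence of $K$ from $\bigl(\mathbf L,(\varepsilon_{\sigma(j)})_j\bigr)$. In effect you have inlined the proof of Lemma~\ref{lemma:samaniego} (and made the independence bookkeeping somewhat more explicit than the paper does), while the final reduction via $N(T_{k:n})\ge k$ and~\eqref{PTkn eq j} is identical.
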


The proof is direct from Lemma~\ref{lemma:samaniego} below, as 
\begin{align*}
\P(N(\Tsys) = j) = \P( \Tsys = T_{j:n} < T_{j+1:n}) = \sum_{k=1}^n s_k \, \P( T_{k:n} = T_{j:n} < T_{j+1:n}) \\
= \sum_{k=1}^n s_k \, \P(N(T_{k:n}) = j) = \sum_{k=1}^j s_k \, \P(N(T_{k:n}) = j) = \sum_{k=1}^j s_k \, \QQ{k-1}{j}.
\end{align*}

The previous result motivates the following definition.
\begin{definition}
For a system with $n>1$ components, we definte its \emph{process signature} as the vector $\mathbf q = (q_1, \ldots, q_n)$ in $\R^n$ of values $q_j := \P(N(\Tsys) = j)$.
\end{definition}

We are motivated by the fact that the process signature coincides with the structural signature in~\eqref{def:s}, i.e., $s_j=q_j$, if the random vector $(T_1,\dots,T_n)$ has an EXC \emph{but} absolutely continuous distribution. Indeed, in the latter case there are no simultaneous failures, so $\P(N(\Tsys) = j) = \P(\Tsys = T_{j:n}) = s_j$, where the last equality comes from~\cite{navarro2007reliability}.
However, Proposition~\ref{proposition1} above shows that, for a semi-coherent system with LFMO components' lifetimes,
the value $s_j$ is not enough and has to be corrected into $q_j = \sum_{k=1}^j s_k \, \QQ{k-1}{j}$.


Lastly, we remark that to prove Proposition~\ref{proposition1} and most of our results we need the following result.

\begin{lemma}\label{lemma:samaniego}
Consider a semi-coherent system with $n>1$ components and structural signature vector $\mathbf s$ in $\R^n$, and assume that the components' lifetimes follow a LFMO distribution. Then for all $t, t_1, \ldots, t_n$,
\begin{align*}
& \P(\Tsys > t, \ T_{1:n} > t_1, \ \ldots, \ T_{n:n}>t_n) \\
& \qquad\qquad = \sum_{k=1}^n s_k \, \P(T_{k:n} > t, \ T_{1:n} > t_1, \ \ldots, \ T_{n:n}>t_n).
\end{align*}
\end{lemma}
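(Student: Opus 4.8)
The plan is to reduce the joint-survival statement to a statement about sequences of one-by-one failures, and then invoke Proposition~\ref{prop:signature}. The key observation is that, conditionally on the path of the L\'evy subordinator $\mathbf L$, the LFMO vector is i.i.d., and — more to the point — the \emph{order} in which the triggers $\varepsilon_1,\ldots,\varepsilon_n$ are crossed is a uniformly random permutation of $\{1,\ldots,n\}$, independent of the crossing \emph{times} and of $\mathbf L$. So first I would set up the following decomposition: let $\pi$ be the (a.s.\ well-defined, after breaking ties by an independent uniform rule) random permutation with $\varepsilon_{\pi(1)}\le\cdots\le\varepsilon_{\pi(n)}$, and condition on $\pi$. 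Given $\pi$, the structure function's first-failure index is deterministic: writing $\Phi$ evaluated along the ``prefix'' states determined by $\pi$, the system fails at the $k$-th crossing where $k=k(\pi)$ is the smallest index for which the state with $\{\pi(1),\ldots,\pi(k)\}$ failed has $\Phi=0$. Crucially $\Tsys=T_{k(\pi):n}$ on this event, because between the $(k-1)$-th and $k$-th trigger crossings the working/failed pattern is fixed and $\Phi$ is constant there, and simultaneous crossings (jumps of $\mathbf L$ hitting several triggers) only make $\Tsys$ coincide with \emph{some} $T_{k:n}$ — monotonicity of $\Phi$ guarantees the system, once failed, stays failed, so the first $j$ with $T_{j:n}=\Tsys$ is exactly $k(\pi)$.

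Next I would exploit that, given $\pi$, the event $\{\Tsys>t,\ T_{1:n}>t_1,\ldots,T_{n:n}>t_n\}$ equals $\{T_{k(\pi):n}>t,\ T_{1:n}>t_1,\ldots,T_{n:n}>t_n\}$ pointwise. Hence
\begin{align*}
& \P(\Tsys>t,\ T_{1:n}>t_1,\ldots,T_{n:n}>t_n) \\
& \quad = \sum_{\sigma} \P(\pi=\sigma)\,\P\!\left(T_{k(\sigma):n}>t,\ T_{1:n}>t_1,\ldots,T_{n:n}>t_n \ \middle|\ \pi=\sigma\right).
\end{align*}
The remaining point is that, given $\pi=\sigma$, the conditional law of the ordered crossing times $(T_{1:n},\ldots,T_{n:n})$ does not depend on $\sigma$ — this is precisely the exchangeability/conditional-i.i.d.\ structure of the LFMO model: the vector of ordered trigger values and the permutation $\pi$ are independent, and the ordered failure times are a deterministic functional of $\mathbf L$ and the ordered triggers only. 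Therefore each conditional probability above equals the unconditional $\P(T_{k(\sigma):n}>t,\ T_{1:n}>t_1,\ldots,T_{n:n}>t_n)$, and grouping the $\sigma$'s by the value of $k(\sigma)$ gives
\begin{align*}
& \P(\Tsys>t,\ T_{1:n}>t_1,\ldots,T_{n:n}>t_n) \\
& \quad = \sum_{k=1}^n \P(k(\pi)=k)\,\P(T_{k:n}>t,\ T_{1:n}>t_1,\ldots,T_{n:n}>t_n).
\end{align*}
Since $\pi$ is uniform on the $n!$ permutations, $\P(k(\pi)=k)$ is exactly the proportion of one-by-one failure sequences in which the system first fails at the $k$-th failure, which by Proposition~\ref{prop:signature} is $s_k$. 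This yields the claimed identity.

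The main obstacle I anticipate is the careful handling of \emph{ties}, i.e.\ jumps of the subordinator that cross several triggers at once (the whole point of the model). One must argue that (i) breaking ties by an auxiliary independent uniform randomization leaves all the quantities $\Tsys, T_{k:n}$ unchanged a.s.\ — because equal triggers are a probability-zero event, so ties come only from jumps of $\mathbf L$, and the tie-broken permutation still records a valid crossing order; and (ii) even when $\Tsys$ coincides with several consecutive $T_{j:n}$, the identity $\Tsys=T_{k(\pi):n}$ with $k(\pi)$ the \emph{first} such index holds by monotonicity of $\Phi$, and this $k(\pi)$ depends only on $\pi$ (the failure \emph{order}), not on which failures were simultaneous. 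A secondary technical point is to make the independence ``$(\pi)\perp(\text{ordered triggers},\mathbf L)$'' precise — this follows because $(\varepsilon_1,\ldots,\varepsilon_n)$ i.i.d.\ continuous implies its order statistics are independent of the argsort permutation, and $\mathbf L$ is independent of all the $\varepsilon_i$; conditioning on $\mathbf L$ throughout makes this rigorous.
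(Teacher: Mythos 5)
Your proposal is correct and follows essentially the same route as the paper's proof: the paper also introduces the argsort permutation $\sigma(\varepsilon)$ of the triggers (a.s.\ well defined since the $\varepsilon_i$ are i.i.d.\ continuous), partitions permutations into the sets $\Sigma^{\phi,k}$ --- your $\{\sigma : k(\sigma)=k\}$ --- writes $\Tsys=\sum_k T_{k:n}\I{\sigma(\varepsilon)\in\Sigma^{\phi,k}}$, and concludes via uniformity of the permutation and its independence from the ordered failure times. Your write-up merely makes explicit two points the paper leaves implicit (the independence of the argsort permutation from the order statistics, and the handling of simultaneous failures via monotonicity of $\Phi$), which is a welcome but not substantively different elaboration.
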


In Section~\ref{sec:proofs} we prove this extension for the LFMO distribution, however we conjecture that it should hold for any exchangeable distribution.

\section{Cost of simple repair policies}\label{sec:main} 


In this section we give our main result for the long-term mean cost of simple repair policies. For that, we consider the following cost and repair structure.

\paragraph{Assumptions} 
\begin{enumerate}
\item The repair of $j$ failed components costs $\ccmp(j)$.
\item The repair of a failed system costs $\csys$ plus the cost of repairing all failed components.
\item Any failure of a component or the system is detected instantaneously.
\item Both types of repairs, i.e., of the system and of components, are performed instantly.
\item At time $t=0$ the system starts with all its components working.
\end{enumerate}

In other words, for example, the \emph{preventive} repair of a system that is still working but has two failed components, costs $\ccmp(2)$; however, the \emph{corrective} repair of a system that failed and has three failed components, costs $\csys + \ccmp(3)$.
In this way, the system repair cost $\csys$ models the additional loss in productivity of the system.

We will consider the following simple repair policies that do not discriminate on which components ---either critical or not, in whichever sense--- have failed, and only use the number of failed components.
\begin{definition}\label{def:repair}
For a system with $n>1$ components, and for any $r$ in $\{1, \ldots, n\}$, we define the \emph{$r$-out-of-$n$:R repair policy} as the one where all failed components, and the system itself if failed, is repaired in either of the following cases:
\begin{itemize}
    \item when $r$ or more components fail;
    \item when the system fails.
\end{itemize}
In a system operating under an $r$-out-of-$n$:R repair policy, denote by $\Tsys\r$ and $\Trep\r$ the first time of system failure and repair, respectively.
\end{definition}

Note that two events can happen at the first repair time $\Trep\r$: either the system failed, in which case it holds that $\Trep\r = \Tsys\r$; or the system could have continued working with the failed components, in which case $\Trep\r < \Tsys\r$.
Hence ---recalling that $N(t)$ is the number of failed components at time $t$---, at the repair time $\Trep\r$ the cost incurred is $\ccmp(N(\Trep\r))$, 
plus $\csys$ if the system also failed, i.e., $\ccmp(N(\Trep\r)) + \csys \I{\Trep\r = \Tsys\r}$.

The following is the main result of this paper. Its proof is deferred to Section~\ref{sec:proofs}.

\begin{theorem}\label{theo}
Consider a semi-coherent system with $n>1$ components and signature $\mathbf s$ in $\R^n$, and assume that its components' lifetimes follow a LFMO distribution with Laplace exponent function $\Psi$. 
Assume that the system operates under an $r$-out-of-$n$:R repair policy, for a given $r$ in $\{1,\ldots, n\}$.
Let $\Tsys\r$ and $\Trep\r$ be as in Definition~\ref{def:repair}.
Denote by $N(t)$ the number of failed components at time $t>0$, and  $C[0,t]$ the total cost incurred by operating the system during the time interval $[0, t]$.
Then the following results hold.

\begin{enumerate}

\item Denoting by $p\r := \P\left( \Trep\r = \Tsys\r \right)$ the probability that the first repair is due to a system failure, it holds that
\begin{align}
p\r = \sum_{k = 1}^n s_k \P \left( T_{\mini{k}{r}:n} = T_{k:n} \right) & = \sum_{k = 1}^r s_k + \sum_{k = r+1}^n s_k \P \left( T_{r:n} = T_{k:n} \right), \label{def:pF} \\ 
1-p\r = \sum_{k = 1}^n s_k \P\left( T_{\mini{k}{r}:n} < T_{k:n} \right) & = \sum_{k = r+1}^n s_k \P\left( T_{r:n} < T_{k:n} \right). \label{eq:1-pF}
\end{align} 
where $\P \left( T_{r:n} = T_{k:n} \right)$ and $\P \left( T_{r:n} < T_{k:n} \right)$ are computed as shown in~\eqref{PTrnTkne} and~\eqref{PTrnTknl}, respectively.

\item The probability that at the time of repair there are exactly $j$ failed components, for $1 \leq j \leq n$, is
\begin{align}
\P\left( N(\Trep\r) = j \right) & = \sum_{k = 1}^{n} s_k \, \P \left( N\left( T_{\mini{r}{k}:n} \right) = j \right) \nonumber \\
& = \sum_{k = 1}^{j \I{j<r} + n \I{j \geq r}} s_k \, \P \left( N\left( T_{\mini{r}{k}:n} \right) = j \right),
\label{def:wr}
\end{align}
where the probability $\P \left( N\left( T_{\mini{r}{k}:n} \right) = j \right)$ is computed as shown in~\eqref{PTkn eq j}. 

\item When~$p\r>0$, for $j =1, \ \ldots, \ n$ it holds that 
\begin{align}
\P\left( N(\Trep\r) = j \left|\, \Trep\r = \Tsys\r \right.\right) & = \P\left( N(\Tsys\r) = j \right) \label{eq:PNTrep eq 1} \\
& = \frac{1}{p\r} \sum_{k = 1}^j s_k \, \P \left( N\left( T_{\mini{r}{k}:n} \right) = j \right); \label{eq:PNTrep eq 2} 
\end{align}
and for $j=r, \, \ldots,  \, n$,
\begin{align}
\P\left( N(\Trep\r) = j \left|\, \Trep\r < \Tsys\r \right.\right) & = \frac{1}{1-p\r}  \sum_{k = j+1}^n s_k \, \P \left( N\left( T_{r:n} \right) = j \right) \label{eq:PNTrep ineq}
\end{align}
where $\sum_{k=n+1}^n$ is zero, per usual convention; while $\P( N(\Trep\r) = j |\, \Trep\r < \Tsys\r )=0$ for $j<r$.


\item When~$p\r>0$ the system will eventually fail, i.e., $\Tsys\r < +\infty$ almost surely, and we have that the long-term mean cost of the system operating under this repair policy is
\begin{align}\label{eq:LTMC}
\lim_{t \to \infty} \frac{C[0, t]}{t} = \frac{\E \, C[0,\Tsys\r]}{\E \, \Tsys\r} = \frac{\E \, C[0,\Trep\r]}{\E \, \Trep\r} \qquad \text{a.s.,}
\end{align}
where the mean costs $\E \, C[0,\Tsys\r]$ and $\E \, C[0,\Trep\r]$, and  mean times $\E \, \Tsys\r$ and $\E \, \Trep\r$, satisfy
\begin{align}
\E \, C[0,\Tsys\r] & = \frac{1}{p\r} \E \, C[0,\Trep\r] = \csys + \frac{1}{p\r} \sum_{j=1}^{n} \ccmp(j) \, \P\left( N(\Trep\r) = j \right) \label{eq:ECost} \\
\E \, \Tsys\r & = \frac{1}{p\r} \E \, \Trep\r = \frac{1}{p\r} \sum_{k=1}^n s_k \, \E \left[ T_{\mini{k}{r}:n} \right] \label{eq:ETfail}
\end{align}
where~$p\r$, $\P( N(\Trep\r) = j )$ and $\E [ T_{\mini{k}{r}:n} ]$ are computed in~\eqref{def:pF}, \eqref{def:wr} and~\eqref{eq:ETkn}, respectively.

\item When~$p\r=0$ the system never fails, i.e., $\Tsys\r = +\infty$ almost surely, 
and it holds that $\lim_{t \to \infty} C[0, t] / t = \E \, C[0,\Trep\r] / \E \, \Trep\r$, where $\E \, C[0,\Trep\r] = \sum_{j=1}^{n} \ccmp(j) \, \P\left( N(\Trep\r) = j \right)$ and also~$\E \, \Trep\r = \sum_{k=1}^n s_k \, \E \left[ T_{\mini{k}{r}:n} \right] $.

\end{enumerate}
\end{theorem}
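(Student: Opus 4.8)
The plan is to reduce all five parts to computations on the first repair cycle, in which the system evolves with no intervention, and then lift the long-run statements by a renewal argument. Semi-coherence makes $\mathbf s$ a probability vector ($s_k\ge 0$, $\sum_k s_k=1$), and before the first repair the component configuration is exactly the no-repair LFMO failure process started from all-working. Consequently the first repair occurs at $\Trep\r=T_{r:n}\wedge\Tsys$, and $\{\Trep\r=\Tsys\r\}=\{\Tsys\le T_{r:n}\}$ while $\{\Trep\r<\Tsys\r\}=\{T_{r:n}<\Tsys\}$ (if $T_{r:n}<\Tsys$ the first repair is triggered by reaching $r$ failures, so the global first system failure occurs only in a later cycle). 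Since $N(t)=\#\{j:T_{j:n}\le t\}$, each of $N(\Trep\r)$, $\I{\Trep\r=\Tsys\r}$ and $C[0,\Trep\r]=\ccmp(N(\Trep\r))+\csys\I{\Trep\r=\Tsys\r}$ is a measurable function of the vector $(\Tsys,T_{1:n},\dots,T_{n:n})$.

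The engine is the mixture coming from Lemma~\ref{lemma:samaniego}: it equates the upper-orthant (joint survival) probabilities of $\mathrm{Law}(\Tsys,T_{1:n},\dots,T_{n:n})$ with those of $\sum_{k=1}^n s_k\,\mathrm{Law}(T_{k:n},T_{1:n},\dots,T_{n:n})$; since such probabilities determine a law on $\R_+^{n+1}$, the two laws coincide, hence $\E[g(\Tsys,T_{1:n},\dots,T_{n:n})]=\sum_k s_k\,\E[g(T_{k:n},T_{1:n},\dots,T_{n:n})]$ for every nonnegative measurable $g$ — that is, one may replace $\Tsys$ by $T_{k:n}$, weight by $s_k$, and on the $k$-th term $\Trep\r=T_{k:n}\wedge T_{r:n}=T_{\mini{k}{r}:n}$. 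From here: Part~1 takes $g=\I{\Tsys\le T_{r:n}}$, so $p\r=\sum_k s_k\,\P(T_{k:n}\le T_{r:n})$, which splits into $k\le r$ (probability $1$) and $k>r$ (where $T_{k:n}\le T_{r:n}$ forces $T_{k:n}=T_{r:n}$), giving \eqref{def:pF}--\eqref{eq:1-pF} via \eqref{PTrnTknl}--\eqref{PTrnTkne}. Part~2 takes $g=\I{N(T_{\mini{k}{r}:n})=j}$ and uses \eqref{PTkn eq j}, noting $\P(N(T_{\mini{k}{r}:n})=j)=0$ unless $\mini{k}{r}\le j$, which trims the sum to $k\le j$ exactly when $j<r$. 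For Part~3, on $\{\Trep\r=\Tsys\r\}$ one has $N(\Trep\r)=N(\Tsys)$, so $\P(N(\Trep\r)=j,\,\Trep\r=\Tsys\r)=\P(N(\Tsys)=j,\,\Tsys\le T_{r:n})$; expanding by the mixture and simplifying the events $\{N(T_{k:n})=j\}\cap\{T_{k:n}\le T_{r:n}\}$ (resp.\ $\cap\{T_{r:n}<T_{k:n}\}$) via the dual identity $\{N(T_{k:n})=j\}=\{T_{j:n}=T_{k:n}<T_{j+1:n}\}$ yields \eqref{eq:PNTrep eq 2} (resp.\ \eqref{eq:PNTrep ineq}) after dividing by $p\r$ (resp.\ $1-p\r$).

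For Parts~4--5 (and the remaining identity \eqref{eq:PNTrep eq 1}): at the stopping time $\Trep\r$ the full component state is reset to all-working, so by the strong Markov property of the LFMO model the successive repair cycles are i.i.d.\ copies of the first, each carrying a ``system-failure'' flag of probability $p\r$, cycle length distributed as $\Trep\r$ (with $\E\Trep\r\le\E T_{r:n}<\infty$ by \eqref{eq:ETkn}) and per-cycle reward distributed as $C[0,\Trep\r]$. The renewal--reward theorem gives $\lim_{t\to\infty}C[0,t]/t=\E\,C[0,\Trep\r]/\E\,\Trep\r$ a.s. When $p\r>0$, the number $G$ of cycles up to and including the first flagged one is geometric$(p\r)$, hence finite a.s.\ (so $\Tsys\r<\infty$ a.s.); writing $\Tsys\r=\sum_{g\le G}L_g$ and $C[0,\Tsys\r]=\csys+\sum_{g\le G}D_g$ with $(L_g,D_g)$ the (length, component-repair cost) of cycle $g$, and using that $\{G\ge g\}$ depends only on cycles $1,\dots,g-1$ and is thus independent of $(L_g,D_g)$, one gets $\E\,\Tsys\r=\sum_{g\ge 1}\E[L_g](1-p\r)^{g-1}=\E\Trep\r/p\r$ and similarly $\E\,C[0,\Tsys\r]=\csys+\E[\ccmp(N(\Trep\r))]/p\r=\E\,C[0,\Trep\r]/p\r$; combined with Parts~1--2 and \eqref{eq:ETkn} this gives \eqref{eq:LTMC}--\eqref{eq:ETfail}. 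The same conditioning on $\{G=g\}$ shows $N(\Tsys\r)\stackrel{d}{=}(N(\Tsys)\mid\Tsys\le T_{r:n})$, which is \eqref{eq:PNTrep eq 1}. When $p\r=0$ every cycle ends by reaching $r$ failures, so $\Tsys\r=+\infty$ a.s.\ and $\csys$ is never incurred, giving the stated simplification with the same renewal--reward identity.

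The steps I expect to be delicate are: (a) upgrading Lemma~\ref{lemma:samaniego} from an identity of orthant probabilities to an identity of expectations of arbitrary functionals of $(\Tsys,T_{1:n},\dots,T_{n:n})$ — routine but needing a clean statement; (b) the tie bookkeeping in the order statistics, i.e.\ pinning down exactly when an event such as $\{N(T_{k:n})=j\}\cap\{T_{k:n}\le T_{r:n}\}$ is $\P$-null versus equal (up to a null set) to $\{N(T_{r:n})=j\}$, since this is what forces the precise summation ranges in \eqref{def:pF}, \eqref{def:wr}, \eqref{eq:PNTrep eq 2}, \eqref{eq:PNTrep ineq}; and (c) in the renewal step, the fact that a cycle's length and its system-failure flag are \emph{dependent}, so $\E\Tsys\r=\E\Trep\r/p\r$ cannot be read off a naïve Wald identity but needs the conditioning on $\{G\ge g\}$ indicated above. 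Everything else is assembling Lemma~\ref{lemma1}, equations \eqref{PTrnTknl}--\eqref{PTkn eq j}, and \eqref{eq:ETkn}.
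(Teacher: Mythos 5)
Your proposal is correct and follows essentially the same route as the paper: reduce every quantity to the first repair cycle where $\Trep\r=T_{r:n}\wedge\Tsys$, apply the mixture identity coming from Lemma~\ref{lemma:samaniego} (replacing $\Tsys$ by $T_{k:n}$ with weight $s_k$, so that $\Trep\r$ becomes $T_{\mini{k}{r}:n}$), and lift to the long run via the strong Markov/renewal structure with a geometric number of cycles until the first system failure. The three points you flag as delicate are exactly the ones the paper addresses --- (a) via the almost-sure representation $(\Tsys,T_{1:n},\dots,T_{n:n})=\sum_k(T_{k:n},T_{1:n},\dots,T_{n:n})\I{\sigma(\varepsilon)\in\Sigma^{\phi,k}}$ established in the proof of Lemma~\ref{lemma:samaniego}, (b) via the case split $k\le r$ versus $k>r$ (where $T_{k:n}\le T_{r:n}$ forces equality), and (c) via the same conditional decomposition over the geometric cycle count --- so no gap remains.
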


We remark that, as a direct corollary, the rate of occurrence of system failures, repairs, and components' failures, are (respectively) $1 / \E \, \Tsys\r$, $1 / \E \, \Trep\r$ and $\sum_{j=1}^{n} j \, \P\left( N(\Trep\r) = j \right) / \E \, \Trep\r$.
Also, the previous result holds for any \emph{mixed} coherent system with $n$ components; that is, a system whose structure function $\Phi$ is actually a randomized choice between a finite set of deterministic coherent structure functions with given known probabilities, see~\cite[Chapter 3]{samaniego2007system}. In this case the result holds for the signature $\mathbf s$ of the mixed coherent system.
Indeed, the proof of Theorem~\ref{theo} in Section~\ref{sec:proofs} essentially relies on the decomposition~\eqref{eq:samaniego}, extended in Lemma~\ref{lemma:samaniego}, which also holds for the latter type of systems.

We also note that it is not unusual that $p\r=0$: in the basic example of Figure~\ref{fig:basic} (Right), in the $r$-out-of-$n$:R policy with $r=1$ and iid exponential failure times of components (i.e., when $\Psi(x)=\mu x$ for some $\mu>0$), the system \emph{never} fails.

We remark that from a numerical perspective, the only values needed to compute the quantities in Theorem~\ref{theo}, are $\Psi(1)$, \ldots, $\Psi(n)$, which parameterize the LFMO distribution of failure times, and the structural signature vector $\mathbf s = (s_1, \ldots, s_n)$, which parameterizes the structure of the system; see Lemma~\ref{lemma1} and Proposition~\ref{prop:signature} and the commentaries following them.

Finally, we note that~\eqref{eq:PNTrep eq 1} gives an expression for the process signature when operating under an $r$-out-of-$n$:R policy, say $\mathbf q\r$: $$q\r_j = \P(N(\Tsys\r) = j) = \frac{1}{p\r}\sum_{k=1}^j s_k \, \P \left( N\left( T_{\mini{r}{k}:n} \right) = j \right);$$ whereas, when there were no repairs, by~\eqref{def:w} the process signature was $$q_j = \P(N(\Tsys) = j) = \sum_{k=1}^j s_k \, \P \left( N\left( T_{k:n} \right) = j \right).$$

\subsection{Corollary of  i.i.d.~components.} A direct corollary of Theorem~\ref{theo} is the following case of i.i.d.~exponentially distributed failure times of components.

\begin{corollary}\label{corol:iid}
Consider a semi-coherent system with $n>1$ components and signature $\mathbf s$ in $\R^n$, 
and assume that its components' lifetimes are i.i.d.~exponentially distributed with rate $\mu>0$.
Assume that the system operates under an $r$-out-of-$n$:R repair policy, for a given $r$ in $\{1,\ldots, n\}$.
Then the following results hold.

\begin{enumerate}
\item The probability that the first repair is due to a system failure is $p\r = \sum_{k=1}^r s_k$.

\item The expected time and cost, respectively, until the first repair, when starting with all components working, is
\begin{align*}
\E \, C[0,\Trep\r] & = \quad\ \ \sum_{k=1}^{r} s_k(\csys+\ccmp(k)) + \ \left( \sum_{k=r+1}^{n} s_k \right) \ccmp(r) \\
\E \, \Trep\r & = \frac{1}{\mu} \left[ \sum_{k=1}^r s_k \left( \sum_{i=n-k+1}^n \frac{1}{i} \right) \quad + \ \left( \sum_{k=r+1}^n s_k \right) \left( \sum_{i=n-r+1}^n \frac{1}{i} \right) \right].
\end{align*}
Also, $\E \, C[0,\Tsys\r]  = \E \, C[0,\Trep\r] /p\r$ and $\E \, \Tsys\r = \E \, \Trep\r /p\r$ when $p\r>0$.

\item The long-term mean cost of the system operating with an $r$-out-of-$n$:R repair policy is
\begin{align*}
\lim_{t \to \infty} \frac{ C[0, t] }{ t } = \mu \, \frac{ \sum_{k=1}^{r} s_k \left( \csys+\ccmp(k) \right) \ + \ \left( \sum_{k=r+1}^{n} s_k \right) \ccmp(r) }{ \sum_{k=1}^r s_k \left( \sum_{i=n-k+1}^n \frac{1}{i} \right) \ + \ \left( \sum_{k=r+1}^n s_k \right) \left( \sum_{i=n-r+1}^n \frac{1}{i} \right) }
\end{align*}
\end{enumerate}
\end{corollary}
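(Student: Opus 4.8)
The plan is to obtain Corollary~\ref{corol:iid} by specializing Theorem~\ref{theo} to the pure-drift subordinator $L_t=\mu t$, whose Laplace exponent is $\Psi(x)=\mu x$; recall from Section~\ref{sec:lifetimes} that this choice produces exactly i.i.d.~$\mathrm{Exp}(\mu)$ component lifetimes, so Theorem~\ref{theo} applies verbatim. The first step is to record the structural simplification: since $\Psi(j+1)-\Psi(j)=\mu$ is constant, formula~\eqref{def:LV} gives $\lambda^{(n)}_V=\mu\sum_{i=0}^{|V|-1}\binom{|V|-1}{i}(-1)^i=\mu\,(1-1)^{|V|-1}$, which equals $\mu$ for $|V|=1$ and $0$ for $|V|\ge 2$. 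Hence no two components ever fail simultaneously: the chain $\mathbf N$ makes only unit jumps, $N(T_{m:n})=m$ almost surely for each $m$, equivalently $T_{r:n}<T_{k:n}$ a.s.\ whenever $r<k$ and $T_{r:n}=T_{k:n}$ only when $r=k$.

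Plugging these facts into the ingredients of Theorem~\ref{theo}: from~\eqref{def:pF}, $\P(T_{r:n}=T_{k:n})=0$ for $k>r$, so $p\r=\sum_{k=1}^r s_k$, which is Part~1. From~\eqref{PTkn eq j}, $\P\bigl(N(T_{\min(r,k):n})=j\bigr)=\I{j=\min(r,k)}$, so the sum in~\eqref{def:wr} collapses to $\P(N(\Trep\r)=j)=s_j$ for $j<r$, to $\P(N(\Trep\r)=r)=\sum_{k=r}^n s_k$, and to $0$ for $j>r$. For the mean times I would use $\E[T_{m:n}]=\tfrac1\mu\sum_{i=n-m+1}^n\tfrac1i$, either by specializing the combinatorial identity~\eqref{eq:ETkn} to $\Psi(i)=\mu i$ or, more transparently, from the classical spacing representation in which $T_{m:n}-T_{m-1:n}$ are independent and $\mathrm{Exp}\bigl(\mu(n-m+1)\bigr)$-distributed.

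Finally I would assemble the pieces: substituting into~\eqref{eq:ETfail} and splitting the sum at $k=r$ yields the stated expression for $\E\,\Trep\r$; substituting the $\P(N(\Trep\r)=j)$ values into~\eqref{eq:ECost}, the term $p\r\csys=\bigl(\sum_{k=1}^r s_k\bigr)\csys$ together with $\sum_{j<r}\ccmp(j)\,s_j+\ccmp(r)\sum_{k=r}^n s_k$ regroups into $\sum_{k=1}^r s_k(\csys+\ccmp(k))+\bigl(\sum_{k=r+1}^n s_k\bigr)\ccmp(r)$; and Part~3 is the ratio $\E\,C[0,\Trep\r]/\E\,\Trep\r$, which by Parts~4 and~5 of Theorem~\ref{theo} equals $\lim_{t\to\infty}C[0,t]/t$ regardless of whether $p\r>0$. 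The only delicate points are checking that the pure-drift subordinator is admissible as an LFMO model (immediate) and the bookkeeping that merges the $j=r$ contribution of $\P(N(\Trep\r)=\cdot)$ into the $\ccmp(r)$ term of the target formula; both are routine.
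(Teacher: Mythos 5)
Your proposal is correct and follows essentially the same route as the paper: specialize to $\Psi(x)=\mu x$, observe that $\lambda^{(n)}_V$ vanishes for $|V|\ge 2$ so the failure chain makes only unit jumps, collapse the probabilities in~\eqref{def:pF} and~\eqref{def:wr}, and substitute into~\eqref{eq:ECost}, \eqref{eq:ETfail} and~\eqref{eq:LTMC}. The only cosmetic difference is that you invoke the exponential spacing representation for $\E\,T_{m:n}$ where the paper verifies the combinatorial identity in~\eqref{eq:ETkn} by induction; both give the same harmonic sums.
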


\begin{proof}[Proof of Corollary~\ref{corol:iid}]
The proof is direct by noting that the i.i.d.~case corresponds to the pure drift process $L_t = \mu t$, in which case $\Psi(x) = \mu x$.
Hence, in~\eqref{def:LV}, we get the shock that hits the subset $V$ of components arrives after an exponentially distributed time with rate $\lambda^{(n)}_{V} = \mu$ for $|V|=1$ and 0 for $|V|>1$.
In~\eqref{TP} and~\eqref{def:Q} this implies that $P_{i,j} = \QQ{i}{j} = \I{j=i+1}$.
Hence, $p\r = \sum_{k=1}^{r} s_k$ in~\eqref{def:pF}; and in~\eqref{def:wr}, $\P(N(\Trep\r) = j) = s_j$ if $j < r$, $= \sum_{k=r}^n s_k$ if $j=r$, and $=0$ otherwise.
Also, in~\eqref{eq:ETkn} we have $\E T_{k:n} = \sum_{i=n-k+1}^n 1/(\mu i)$ since, by induction, $\sum_{i = n-k+1}^n \binom{n}{i} \binom{i-1}{n-k} (-1)^{i-n+k-1} / i = \sum_{i=n-k+1}^n 1/i$ for all $1 \leq k \leq n$.
We conclude by applying the latter to~\eqref{eq:LTMC}, \eqref{eq:ECost} and~\eqref{eq:ETfail}.
\end{proof}


\subsection{Signature decomposition interpretation}\label{sec:signature}

We now argue that Theorem~\ref{theo} state several extensions of the Samaniego signature decomposition in~\eqref{eq:samaniego}.
Indeed, the latter can be written as $$\P^\mathbf{s} \left( \Tsys >t \right) = \sum_{k = 1}^n s_k \, \P^{k:n} \left( \Tsys >t \right),$$ where $\P^\mathbf{s}$ and $\P^{k:n}$ are the probabilities when considering the events, respectively, for a semi-coherent system with signature vector $\mathbf s$ and a $k$-out-of-$n$:F system.
This holds because for a $k$-out-of-$n$:F system its system failure time is $\Tsys = T_{k:n}$; see Figure~\ref{fig:samaniego} for a simple example.



Similarly, noting that for a $k$-out-of-$n$:F system with an $r$-out-of-$n$:R policy its first repair time is $\Trep\r = \min\{ T_{k:n}, \, T_{r:n}\} = T_{\mini{k}{r}:n}$, it follows that Theorem~\ref{theo} extends as follows the Samaniego decomposition~\eqref{eq:samaniego} in terms of its signature vector $\mathbf s$ and of $k$-ouf-of-$n$ systems:
\begin{align}
\P^\mathbf{s} \left( \Trep\r = \Tsys\r \right) & \displaystyle = \sum_{k = 1}^n s_k \, \P^{k:n} \left( \Trep\r = \Tsys\r \right) = \sum_{k = 1}^n s_k \, \P \left( T_{\mini{r}{k}:n} = T_{k:n} \right) \label{eq:samaniego 1} \\
\P^\mathbf{s} \left( \Trep\r < \Tsys\r \right) & \displaystyle = \sum_{k = 1}^n s_k \, \P^{k:n} \left( \Trep\r < \Tsys\r \right) = \sum_{k = 1}^n s_k \, \P \left( T_{\mini{r}{k}:n} < T_{k:n} \right) \label{eq:samaniego 2} \\
\P^\mathbf{s} \left( N(\Trep\r) = j \right) & = \sum_{k = 1}^n s_k \, \P^{k:n} \left( N(\Trep\r) = j \right) = \sum_{k = 1}^n s_k \, \P \left( N(T_{\mini{r}{k}:n}) = j \right) \label{eq:samaniego 3} \\
\E^\mathbf{s} \, \Trep\r & = \sum_{k=1}^n s_k \, \E^{k:n} \, \Trep\r  = \sum_{k=1}^n s_k \, \E \, T_{\mini{r}{k}:n} \nonumber \\
\E^\mathbf{s} \, C[0,\Trep\r] & = \sum_{k = 1}^n s_k \, \E^{k:n} \, C[0,\Trep\r] . \label{eq:samaniego 4}
\end{align}

However, when $\P^\mathbf{s} ( \Trep\r = \Tsys\r ) > 0$, Theorem~\ref{theo} also state the following decompositions that require correcting the weights $s_k$ into $s_k \, \P^{k:n} ( \Trep\r = \Tsys\r ) / \P^\mathbf{s} ( \Trep\r = \Tsys\r )$ as follows 
\begin{align*}
\P^\mathbf{s} \left( N(\Tsys\r) = j \right) & = \sum_{k = 1}^n \frac{s_k \, \P^{k:n} \left( \Trep\r = \Tsys\r \right)}{\P^\mathbf{s} \left( \Trep\r = \Tsys\r \right)} \, \P^{k:n} \left( N(\Tsys\r) = j \right) \\
\E^\mathbf{s} \, \Tsys\r & = \sum_{k=1}^n \frac{s_k \, \P^{k:n} \left( \Trep\r = \Tsys\r \right)}{\P^\mathbf{s} \left( \Trep\r = \Tsys\r \right)} \, \E^{k:n} \, \Tsys\r \\
\E^\mathbf{s} \, C[0,\Tsys\r] & = \sum_{k = 1}^n \frac{s_k \, \P^{k:n} \left( \Trep\r = \Tsys\r \right)}{\P^\mathbf{s} \left( \Trep\r = \Tsys\r \right)} \, \E^{k:n} \, C[0,\Tsys\r].
\end{align*}



\section{Computational experiments}\label{sec:exps}
 
In this section we show computational experiments to illustrate our results.
For that, we first simulate the performance of our proposed $r$-out-of-$n$:R policies and compare it with the theoretical values we derive. We do this for the small system with $n=3$ components in Figure~\ref{fig:basic}.
Then, we present the values that can be computed for a medium-sized system with $n=26$ components.

The parameters of the L\'evy-frailty Marshall-Olkin distribution are chosen in the following way.
We take as subordinator process $\mathbf L = (L(t): t \geq 0)$ a compound Poisson process (CPP) with rate $\lambda=1/5$, constant drift $\mu = 0.9$, and with jumps having an exponential distribution with rate $\gamma=1$, of the type in Figure~\ref{fig:LFMO ex} (Left).
That is, $L_t = \mu t + \sum_{i=0}^{N(t)} J_i$, where $\mathbf N$ is a CPP($\lambda$) and $J_i \sim expo(\gamma)$ iid.
We restrict to these processes because, as argued in~\cite[ch.~XVII S.~2]{feller1971introduction}, any L\'evy subordinator can be approximated as close as desired by compound Poisson processes.
As argued in Section~\ref{sec:lifetimes}, in this case the Laplace exponent function $\Psi$ of $\mathbf L$ takes the form $\Psi(x) = \mu x + \lambda x / (\gamma + x)$, so in particular the expected failure time of each component is $1/\Psi(1)=1$. In this way, these values of $\lambda$, $\mu$ and $\gamma$ are chosen to normalize the time units, i.e., one time unit represents the mean time to failure of a single component. Also, a heuristic interpretation of the model is that ``nominally'' each component fails on average after $1/\mu=1.111\ldots$ time units; however, shocks of degradation affecting all components happen on average every $1/\lambda=5$ time units, and each shock kills a working component independently with probability $1-\lambda/(\lambda+1)=0.833\ldots \approx 83\%$.

We also consider that the cost of repairing $i$ failed components is $\ccmp (i)=i$, however, repairing a failed system costs additionally $\csys = 10 \times n$. That is, the disruption cost of a system is an order of magnitude higher than the value of all of its components.

\subsection{Convergence for a small sized system}\label{sec:exps-small}

In Table~\ref{tab:r_simple} we show the values computed with the formulas derived in Theorem~\ref{theo} for the system with $n=3$ components in Figure~\ref{fig:LFMO ex}.
Recall that $p\r := \P( \Trep\r = \Tsys\r )$ is the probability that the first repair is due to a system failure. We denote by $N[0,t]$ and $C[0,t]$ the total number of failures and cost, respectively, in the time window $[0,t]$. Also, LTMN and LTMC correspond, respectively, to the long-term mean number of failures $\lim_{t \to \infty} N[0,t]/t$ and long-term mean cost $\lim_{t \to \infty} C[0,t]/t$.
The total number of failures $N[0,t]$ is computed using the expressions for $C[0,t]$ with $\ccmp(i)=i$ and $\csys=0$.

\begin{table}[h]
    \centering
    \begin{tabular}{|c|c|c||c|c||c|c|}
        \hline
        $r$ & $p\r$ & $\E \, \Tsys\r$ & $\E \, N[0, \, \Tsys\r]$ & $\E \, C[0,\Tsys\r]$ & LTMN & LTMC \\ \cline{3-5}
            &       & $\E \, \Trep\r$ & $\E \, N[0, \, \Trep\r]$ & $\E \, C[0,\Trep\r]$ &      &       \\
        \hline \hline
        1 & 0.0292 & 12.0   & 36.0   & 66.0    & 3.0     & 5.5     \\ 
          &        & 0.3509 & 1.0526 & 1.9298  &         &         \\ \hline
        2 & 0.6836 & 1.2434 & 3.0    & 33.0    & 2.4128  & 26.5409 \\
          &        & 0.85   & 2.0508 & 22.559  &         &         \\ \hline
        3 & 1.0    & 1.1664 & 2.3672 & 32.3672 & 2.0296  & 27.7505 \\
          &        & 1.1664 & 2.3672 & 32.3672 &         &         \\
        \hline
    \end{tabular}
    \caption{Performance indicators for the system with $n=3$ components in Figure~\ref{fig:LFMO ex}, computed with the results in Theorem~\ref{theo}.}
    \label{tab:r_simple}
\end{table}

We observe first that, as $r$ grows, we repair less frequently, since $\E \, \Trep\r$ increases.
In turn, this decreases the mean time to system failure $\E \, \Tsys\r$ and the system breaks down more frequently.
Note, however, that the repair policy for $r=3$ corresponds to only repairing when the system fails ---see Figure~\ref{fig:basic}--- so we obtain $p\r=1$ and $\Trep\r=\Tsys\r$ almost surely.
Notably, despite the LTMN (rate of components' failures) being the highest for $r=1$---because, as we repair more, there are more components to break with the simultaneous failures---, this policy attains the lowest cost rate, LTMC.

\begin{figure}[h]
{
\centering
     \begin{subfigure}[b]{.45\textwidth}
         \centering
         \includegraphics[width=\textwidth]{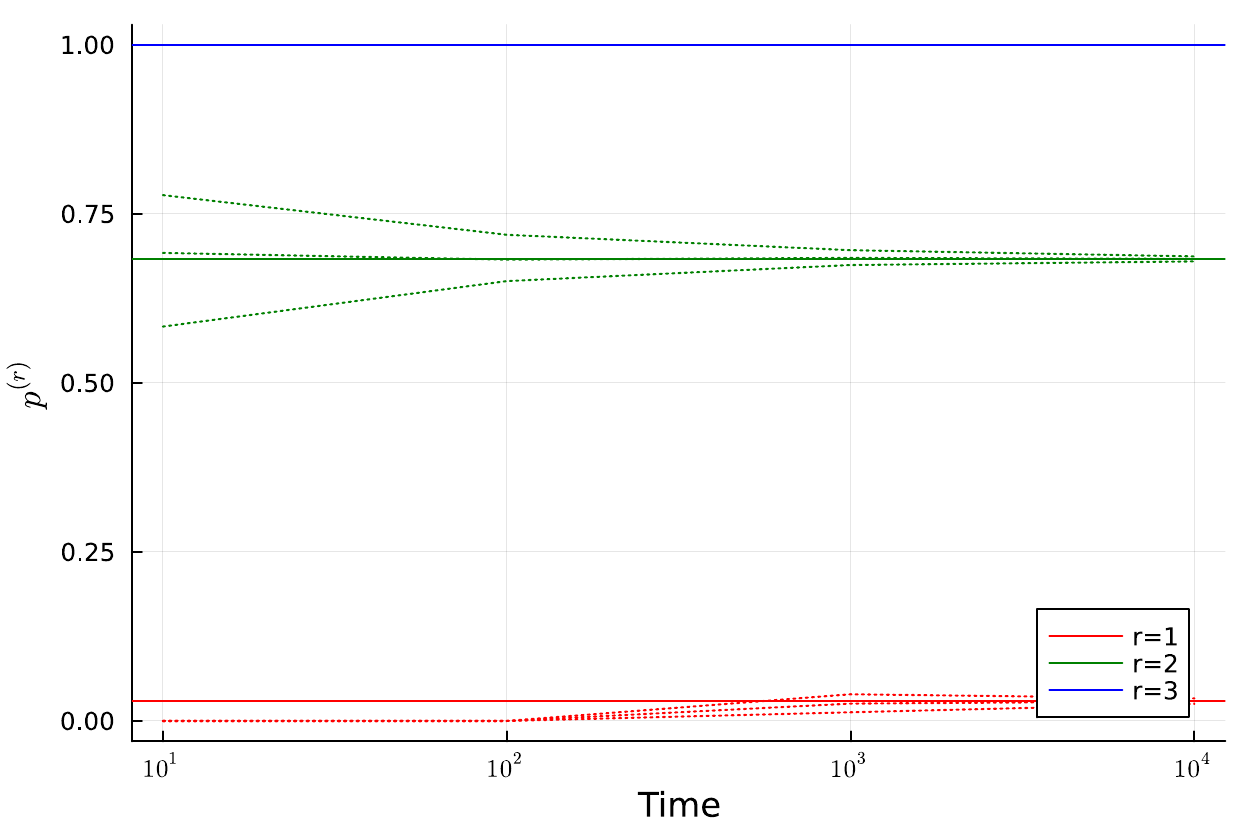}
     \end{subfigure}
~
     \begin{subfigure}[b]{.45\textwidth}
         \centering
         \includegraphics[width=\textwidth]{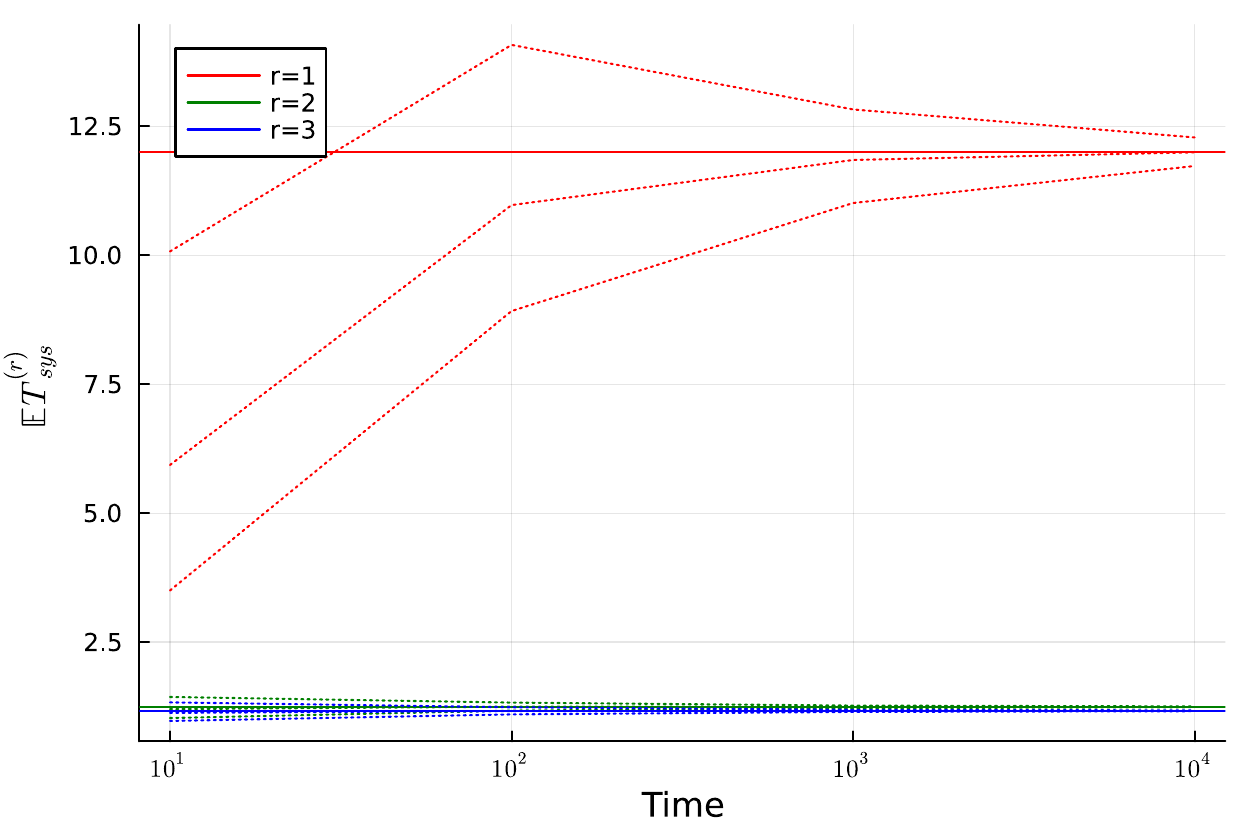}
     \end{subfigure}
     \begin{subfigure}[b]{.45\textwidth}
         \centering
         \includegraphics[width=\textwidth]{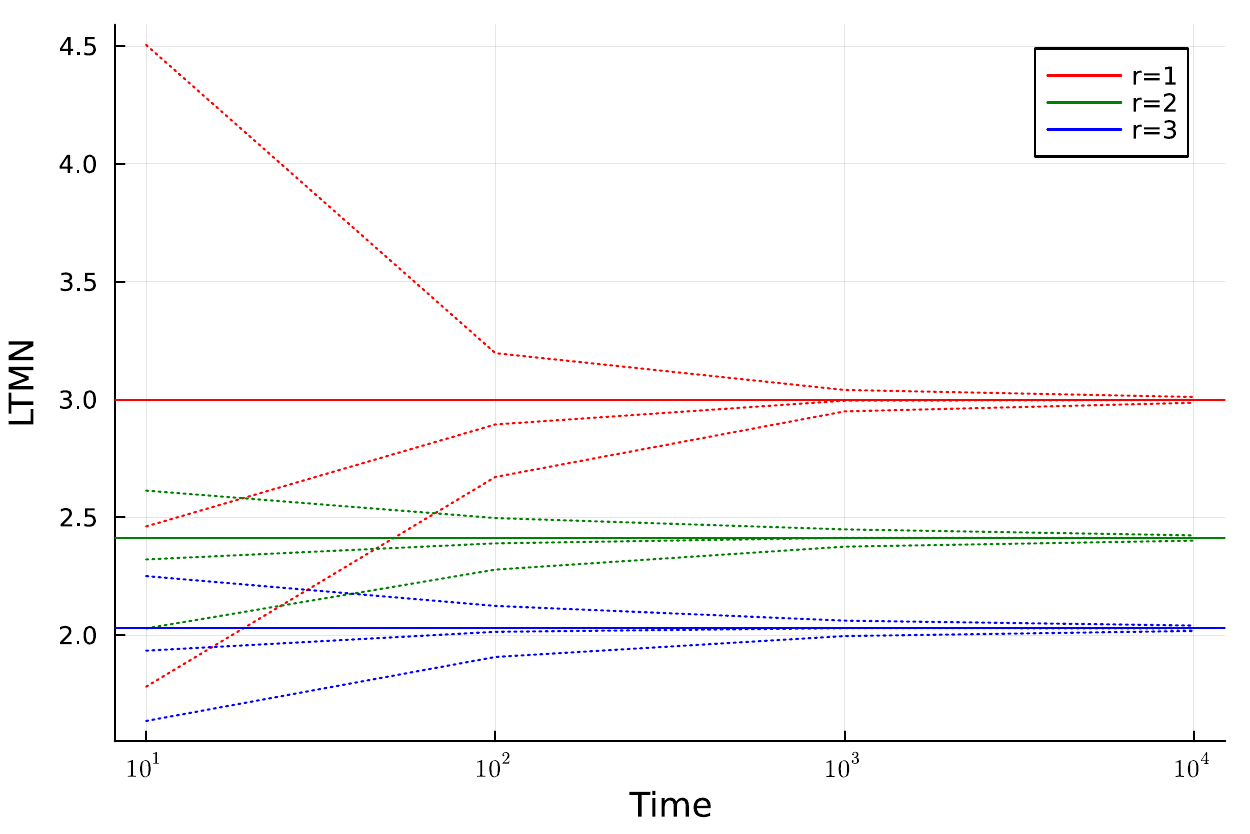}
     \end{subfigure}
~
     \begin{subfigure}[b]{.45\textwidth}
         \centering
         \includegraphics[width=\textwidth]{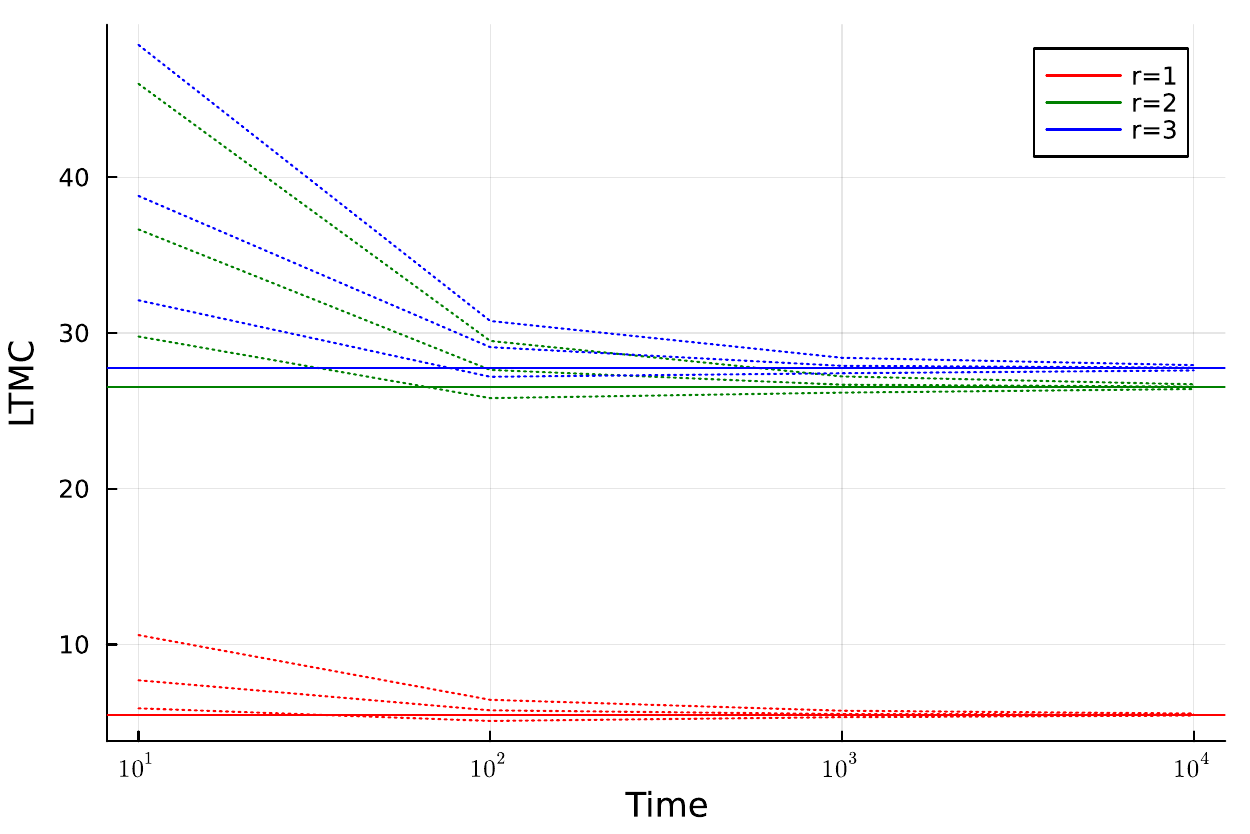}
     \end{subfigure}
\caption{
For each $r=1, \, \ldots,  \, n$, we simulate the system with $n=3$ components in Figure~\ref{fig:quants} operating under an $r$-out-of-$n$:R policy, over a simulation horizon of times $t = 10^1$, $10^2$, $10^3$ and $10^4$. We repeat this $10\,000$ times and obtain empirical distributions for estimators of $p\r$, $\E\Tsys\r$, $LTMN$ and $LTMC$. In dotted lines we show the quantiles 25\%, 50\% (median) and 75\% of each empirical distribution, and in solid line we show the corresponding theoretical value in Table~\ref{tab:r_simple}.
}
\label{fig:quants}
}
\end{figure}

In Figure~\ref{fig:quants} we compare the values in Table~\ref{tab:r_simple} obtained with the formulas, to the empirical values obtained using Monte Carlo simulation.
For that, we consider the same system structure and parameters of the LFMO distribution and perform the following simulation.
For each of $j=1, \ldots, 10,\!000$ repetitions, each time horizon $t$ in $10^1$, $10^2$, $10^3$ and $10^4$, and each $r=1, \, \ldots, \, n$, we do the following.
We simulate the system operating under an $r$-out-of-$n$:R policy over a simulation time horizon $[0,t]$, and register the following: the number of times when there was a repair, and the proportion---call it $\hat{p}^{j,t,(r)}$---of times of the latter where the repair was due to a system failure; the times between system failures, and the average---denote it by $\hat\E^{j,t} \, \Tsys\r$---of these times; the rate $N[0,t]/t$ of total number of components' failures per time unit---call it $\widehat{LTMN}^{j,t,(r)}$---; and the rate $C[0,t]/t$ of total cost per time unit---denote it $\widehat{LTMC}^{j,t,(r)}$.
In this way, for each time horizon $t$ in $10^1$, $10^2$, $10^3$ and $10^4$ we have the empirical distributions $\{\hat p^{1,\, t, \, (r)}, \ \ldots, \ \hat{p}^{1,000,\, t,\, (r)}\}$, $\{\hat\E^{1,\, t} \, \Tsys\r , \ \ldots, \ \hat\E^{1,000,\, t} \, \Tsys\r\}$, $\{\widehat{LTMN}^{1,\, t,\, (r)}, \ \ldots, \ \widehat{LTMN}^{1,000, \, t, \, (r)}\}$ and also $\{\widehat{LTMC}^{1,\, t,\,  (r)}, \ \ldots, \ \widehat{LTMC}^{1,000, \, t, \, (r)}\}$. For each of these sets, we obtain their quantiles $25\%$, $50\%$ and $75\%$, and plot them in Figure~\ref{fig:quants} with dotted lines, for each of the simulation time horizons $t$ in $10^1$, $10^2$, $10^3$ and $10^4$, and each $r=1, \, \ldots, \, 3$. We also plot in solid line the theoretical values we derive in our results, also shown in Table~\ref{tab:r_simple}. We do this to assess the convergence of the empirical distributions to our theoretical values.

We observe that as the time horizon grows, all the empirical distributions converge to the corresponding theoretical values. Also, in rigour, the convergences of the $LTMC$ and $LTMN$ are almost sure convergences due to Renewal Theory, whereas the convergences to $p^{(r)}$ and $\E \, \Tsys\r$ hold due to the strong Law of Large Numbers. Hence, the latter two depend on the time horizon $t$ only on the number of repetitions of the random variable that is observed during the simulated time horizon.

\subsection{Results for a medium sized system}\label{sec:exps-med}

\begin{figure}[h]
{
\centering
     \begin{subfigure}[b]{.49\textwidth}
         \centering
\centering
\resizebox{1\textwidth}{!}{%
\begin{circuitikz}
\tikzstyle{every node}=[font=\LARGE]
\draw  (12.5,9.5) circle (0.5cm);
\draw  (11.25,5.75) circle (0.5cm);
\draw  (7.5,2) circle (0.5cm);
\draw [, line width=0.6pt ] (5,6) circle (0.5cm);
\draw  (15.75,9.5) circle (0.5cm);
\draw  (10,2) circle (0.5cm);
\draw  (20.75,9.5) circle (0.5cm);
\draw  (7.5,9.5) circle (0.5cm);
\draw  (8.75,5.75) circle (0.5cm);
\draw  (18,9.5) circle (0.5cm);

\draw  (13.75,5.75) circle (0.5cm);
\draw  (16.25,5.75) circle (0.5cm);
\draw  (18.5,5.75) circle (0.5cm);
\draw  (19.75,7.75) circle (0.5cm);
\draw  (22.5,4) circle (0.5cm);
\draw  (12.5,2) circle (0.5cm);
\draw  (16.75,2) circle (0.5cm);
\draw  (14.75,2) circle (0.5cm);
\draw  (18.75,2) circle (0.5cm);

\draw  (23.75,5.75) circle (0.5cm);
\draw  (21,3) circle (0.5cm);
\draw [short] (8,9.5) -- (12,9.5);
\draw [short] (5,6.5) -- (7,9.5);
\draw [short] (5,5.5) -- (7,2);
\draw [short] (7.5,2.5) -- (7.5,9);
\draw [short] (7.75,9) -- (8.5,6.25);
\draw [short] (9,5.25) -- (9.75,2.5);
\draw [short] (10.25,2.5) -- (11,5.25);
\draw [short] (11.5,6.25) -- (12.25,9);
\draw [short] (12.75,9) -- (13.75,6.25);
\draw [short] (10.5,2) -- (12,2);
\draw [short] (8,2) -- (9.25,2);
\draw [short] (13,2) -- (14.25,2);
\draw [short] (15.25,2) -- (16.25,2);
\draw [short] (17.25,2) -- (18.25,2);
\draw [short] (14.25,5.75) -- (15.75,5.75);
\draw [short] (16.75,5.75) -- (18,5.75);
\draw [short] (18.75,6.25) -- (19.5,7.25);
\draw [short] (13,9.5) -- (15.25,9.5);
\draw [short] (16.25,9.5) -- (17.5,9.5);
\draw [short] (20.5,2.75) -- (19.25,2.25);
\draw [short] (23.5,5.25) -- (22.75,4.5);
\draw [short] (22,3.75) -- (21.5,3.25);
\draw [short] (20,8.25) -- (20.5,9);
\draw [short] (21.25,9.25) -- (23.5,6.25);
\draw [short] (18.5,9.5) -- (20.25,9.5);
\draw [short] (14.75,2.5) -- (16.25,5.25);
\node [font=\LARGE] at (3.5,6) {UCSB};
\node [font=\LARGE] at (7.5,10.75) {SRI};
\node [font=\Large] at (10,6.75) {STANFORD};
\node [font=\LARGE] at (11.75,4.75) {SDC};
\node [font=\LARGE] at (12.5,10.75) {UTAH};
\node [font=\LARGE] at (15.5,10.75) {NCAR};
\node [font=\LARGE] at (18,10.75) {AWS};
\node [font=\LARGE] at (21,10.75) {CASE};
\node [font=\LARGE] at (21.25,7) {ROME};
\node [font=\LARGE] at (20.5,5.25) {LINCOLN};
\node [font=\LARGE] at (16.25,6.75) {MIT};
\node [font=\LARGE] at (14.5,4.75) {ILLINOIS};
\node [font=\LARGE] at (7.5,0.75) {UCLA};
\node [font=\LARGE] at (10,0.75) {RAND};
\node [font=\LARGE] at (12.5,0.75) {ALB};
\node [font=\LARGE] at (14.5,0.75) {BBN};
\node [font=\LARGE] at (17.25,0.75) {HARVARD};
\node [font=\LARGE] at (21.25,1.5) {BURROUGHS};
\node [font=\LARGE] at (22.5,2.75) {ETAC};
\node [font=\LARGE] at (24.25,4) {MITRE};
\node [font=\LARGE] at (25.25,5.75) {CMU};
\end{circuitikz}
}%
\end{subfigure}
\caption{
We consider the classical ARPA network, see~\cite[S.~6]{satyanarayana1985linear}, where each of the $n=26$ components can be working or failed, and where the system is working if there is a path of working edges between UCSB and CMU.
}
\label{fig:arpa}
}
\end{figure}
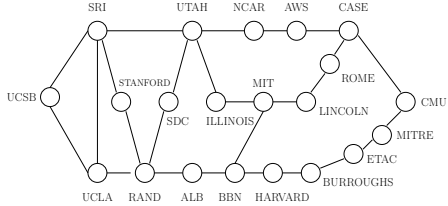

\begin{table}[p]
\tiny
    \centering
    \begin{tabular}{|c|c|c||c|c||c|c|}
        \hline
        $r$ & $p\r$ & $\E \, \Tsys\r$ & $\E \, N[0, \, \Tsys\r]$ & $\E \, C[0,\Tsys\r]$ & LTMN & LTMC \\ \cline{3-5}
            &       & $\E \, \Trep\r$ & $\E \, N[0, \, \Trep\r]$ & $\E \, C[0,\Trep\r]$ &      &       \\
        \hline \hline
        1 & 0.00647927 &   6.54182 &  170.086 &  430.086 & 25.9998 & 65.7441 \\ 
          &            & 0.0423862 &  1.10203 &  2.78664 &         &         \\ \hline
        2 &  0.0406624 &   2.11763 &  53.9829 &  313.983 & 25.4921 & 148.271 \\
          &            & 0.0861081 &  2.19508 &  12.7673 &         &         \\ \hline
        3 &   0.126304 &    1.0295 &  25.7252 &  285.725 & 24.9881 & 277.538 \\
          &            &  0.130029 &  3.24919 &  36.0881 &         &         \\ \hline
        4 &   0.264619 &  0.648965 &  15.9031 &  275.903 & 24.5053 & 425.143 \\
          &            &  0.171729 &  4.20827 &  73.0093 &         &         \\ \hline
        5 &   0.430154 &  0.484459 &  11.6583 &  271.658 & 24.0646 & 560.746 \\
          &            &  0.208392 &  5.01486 &  116.855 &         &         \\ \hline
        6 &   0.591064 &  0.402885 &  9.54107 &  269.541 & 23.6819 & 669.027 \\
          &            &  0.238131 &  5.63938 &  159.316 &         &         \\ \hline
        7 &   0.725577 &  0.359054 &  8.38944 &  268.389 & 23.3654 & 747.491 \\
          &            &  0.260521 &  6.08718 &  194.737 &         &         \\ \hline
        8 &   0.826094 &  0.334493 &  7.73207 &  267.732 & 23.1158 & 800.412 \\
          &            &  0.276323 &  6.38741 &  221.172 &         &         \\ \hline
        9 &   0.895101 &  0.320503 &  7.34832 &  267.348 & 22.9275 & 834.152 \\
          &            &  0.286882 &  6.57749 &  239.304 &         &         \\ \hline
        10&    0.93943 &  0.312551 &  7.12349 &  267.123 & 22.7915 & 854.656 \\
          &            &   0.29362 &  6.69202 &  250.944 &         &         \\ \hline
        11&   0.966405 &  0.308099 &  6.99301 &  266.993 & 22.6973 & 866.583 \\
          &            &  0.297748 &  6.75808 &  258.023 &         &         \\ \hline
        12&   0.982082 &  0.305664 &  6.91864 &  266.919 & 22.6348 & 873.241 \\
          &            &  0.300187 &  6.79467 &  262.136 &         &         \\ \hline
        13&   0.990827 &  0.304372 &  6.87724 &  266.877 & 22.5949 & 876.814 \\
          &            &   0.30158 &  6.81416 &  264.429 &         &         \\ \hline
        14&   0.995519 &  0.303707 &  6.85484 &  266.855 & 22.5706 & 878.658 \\
          &            &  0.302346 &  6.82412 &  265.659 &         &         \\ \hline
        15&   0.997935 &  0.303378 &  6.84312 &  266.843 & 22.5564 & 879.573 \\
          &            &  0.302751 &  6.82899 &  266.292 &         &         \\ \hline
        16&   0.999119 &  0.303222 &  6.83725 &  266.837 & 22.5487 & 880.007 \\
          &            &  0.302955 &  6.83122 &  266.602 &         &         \\ \hline
        17&   0.999664 &  0.303152 &  6.83447 &  266.834 & 22.5447 &   880.2 \\
          &            &   0.30305 &  6.83218 &  266.745 &         &         \\ \hline
        18&   0.999891 &  0.303123 &  6.83328 &  266.833 & 22.5429 &  880.28 \\
          &            &   0.30309 &  6.83254 &  266.804 &         &         \\ \hline
        19&   0.999973 &  0.303113 &  6.83284 &  266.833 & 22.5422 & 880.308 \\
          &            &  0.303105 &  6.83266 &  266.826 &         &         \\ \hline
        20&   0.999996 &   0.30311 &  6.83271 &  266.833 &  22.542 & 880.315 \\
          &            &  0.303109 &  6.83269 &  266.832 &         &         \\ \hline
        21&        1.0 &   0.30311 &  6.83269 &  266.833 &  22.542 & 880.317 \\
          &            &   0.30311 &  6.83269 &  266.833 &         &         \\ \hline
        22&        1.0 &   0.30311 &  6.83269 &  266.833 &  22.542 & 880.317 \\
          &            &   0.30311 &  6.83269 &  266.833 &         &         \\ \hline
        23&        1.0 &   0.30311 &  6.83269 &  266.833 &  22.542 & 880.317 \\
          &            &   0.30311 &  6.83269 &  266.833 &         &         \\ \hline
        24&        1.0 &   0.30311 &  6.83269 &  266.833 &  22.542 & 880.317 \\
          &            &   0.30311 &  6.83269 &  266.833 &         &         \\ \hline
        25&        1.0 &   0.30311 &  6.83269 &  266.833 &  22.542 & 880.317 \\
          &            &   0.30311 &  6.83269 &  266.833 &         &         \\ \hline
        26&        1.0 &   0.30311 &  6.83269 &  266.833 &  22.542 & 880.317 \\
          &            &   0.30311 &  6.83269 &  266.833 &         &         \\ \hline
    \end{tabular}
    \caption{Performance indicators, computed with the results in Theorem~\ref{theo}, for the ARPA network in Figure~\ref{fig:arpa} that has $n=26$ components (the edges) that are subject to failures. The costs considered in this case are $\ccmp (i)=i$ and $\csys = 10 \times \ccmp(n)$, i.e., repairing a failed system (a corrective maintenance) costs an order of magnitude higher than replacing all components (a preventive maintenance).}
    \label{tab:r_arpa}
\end{table}

We now consider the classical ARPA computer network shown in Figure~\ref{fig:arpa}, see~\cite[S.~6]{satyanarayana1985linear}, where each of its $n=26$ edges can be in a working or failed condition and the nodes are perfectly reliable. We consider that the system is working if there is a path of working edges between the nodes UCSB and CMU.
This system has the following signature vector $s$, that we compute using Definition~\ref{def:signature}.
\begin{align}
s = \biggl( &
0,\ 
\frac{9}{325},\ 
\frac{209}{2,600},\ 
\frac{8,113}{59,800},\ 
\frac{54,567}{328,900},\ 
\frac{75,329}{460,460},\ 
\frac{636,819}{4,604,600},\ 
\frac{1,304,221}{12,498,200},\ \nonumber \\
& \frac{22,582}{312,455},\ 
\frac{113,024}{2,414,425},\ 
\frac{110,911}{3,863,080},\ 
\frac{64,947}{3,863,080},\ 
\frac{67,233}{7,116,200},\ 
\frac{690,273}{135,207,800},\ \nonumber\\
&\frac{10,227}{3,863,080},\ 
\frac{55,557}{42,493,880},\ 
\frac{3,214}{5,311,735},\ 
\frac{398}{1,562,275},\ 
\frac{289}{3,124,550},\ 
\frac{3}{115,115},\ \nonumber\\
&\frac{1}{230,230},\ 
0,\ 
0,\ 
0,\ 
0,\ 
0
\biggr) \label{ARPA signature}
\end{align}
For the failure times of the edges, we consider the same LFMO distribution as before, described at the beginning of Section~\ref{sec:exps}.

Regarding costs, we consider two cases.
In the first case, we consider the setting where $\ccmp (i)=i$, i.e., repairing each component costs one unit, however, repairing a failed system costs additionally $\csys = 10 \times n$. This corresponds to a system disruption (a corrective maintenance), causing an additional cost that is an order of magnitude higher than replacing all components (a preventive maintenance). In Table~\ref{tab:r_arpa} we show the performance indicators obtained with Theorem~\ref{theo} in this case. We see that the long-term mean cost (LTMC) in~\eqref{eq:LTMC} attains the lowest value for the $r$-out-of-$n$:R policy with $r=1$, evidencing that corrective maintenances are much more expensive than preventive ones. We also see that the LTMC is monotonous and grows with $r$.

The second case of costs that we consider is the setting where $\ccmp (i)=i$ as before; however, now repairing a failed system costs only  $\csys = 1$. In Table~\ref{tab:r_arpa2} we show the performance indicators in this case, obtained with Theorem~\ref{theo}. We consider this case because, interestingly, we see that the LTMC is not monotonous in $r$: from $r=1$ to $r=3$ the LTMC decreases, then from $r=3$ to $r=6$ it increases, and then onwards it decreases again. Overall, the lowest LTMC is when we let the system fail before any repair, which is the same as the  $r$-out-of-$n$:R policies do for $r \geq 21$, according to Table~\ref{tab:r_arpa2}.

\begin{table}[p]
\tiny
    \centering
    \begin{tabular}{|c|c|c||c|c||c|c|}
        \hline
        $r$ & $p\r$ & $\E \, \Tsys\r$ & $\E \, N[0, \, \Tsys\r]$ & $\E \, C[0,\Tsys\r]$ & LTMN & LTMC \\ \cline{3-5}
            &       & $\E \, \Trep\r$ & $\E \, N[0, \, \Trep\r]$ & $\E \, C[0,\Trep\r]$ &      &       \\
        \hline \hline

  1 & 0.00647927 &   6.54182 &  170.086 &  171.086 & 25.9998 & 26.1526 \\ 
    &            & 0.0423862 &  1.10203 &  1.10851 &         &         \\ \hline
  2 &  0.0406624 &   2.11763 &  53.9829 &  54.9829 & 25.4921 & 25.9643 \\ 
    &            & 0.0861081 &  2.19508 &  2.23574 &         &         \\ \hline
  3 &   0.126304 &    1.0295 &  25.7252 &  26.7252 & 24.9881 & 25.9594 \\ 
    &            &  0.130029 &  3.24919 &  3.37549 &         &         \\ \hline
  4 &   0.264619 &  0.648965 &  15.9031 &  16.9031 & 24.5053 & 26.0463 \\ 
    &            &  0.171729 &  4.20827 &  4.47289 &         &         \\ \hline
  5 &   0.430154 &  0.484459 &  11.6583 &  12.6583 & 24.0646 & 26.1287 \\ 
    &            &  0.208392 &  5.01486 &  5.44501 &         &         \\ \hline
  6 &   0.591064 &  0.402885 &  9.54107 &  10.5411 & 23.6819 & 26.1639 \\ 
    &            &  0.238131 &  5.63938 &  6.23044 &         &         \\ \hline
  7 &   0.725577 &  0.359054 &  8.38944 &  9.38944 & 23.3654 & 26.1505 \\ 
    &            &  0.260521 &  6.08718 &  6.81276 &         &         \\ \hline
  8 &   0.826094 &  0.334493 &  7.73207 &  8.73207 & 23.1158 & 26.1054 \\ 
    &            &  0.276323 &  6.38741 &  7.21351 &         &         \\ \hline
  9 &   0.895101 &  0.320503 &  7.34832 &  8.34832 & 22.9275 & 26.0476 \\ 
    &            &  0.286882 &  6.57749 &  7.47259 &         &         \\ \hline
 10 &    0.93943 &  0.312551 &  7.12349 &  8.12349 & 22.7915 &  25.991 \\ 
    &            &   0.29362 &  6.69202 &  7.63145 &         &         \\ \hline
 11 &   0.966405 &  0.308099 &  6.99301 &  7.99301 & 22.6973 &  25.943 \\ 
    &            &  0.297748 &  6.75808 &  7.72448 &         &         \\ \hline
 12 &   0.982082 &  0.305664 &  6.91864 &  7.91864 & 22.6348 & 25.9063 \\ 
    &            &  0.300187 &  6.79467 &  7.77675 &         &         \\ \hline
 13 &   0.990827 &  0.304372 &  6.87724 &  7.87724 & 22.5949 & 25.8803 \\ 
    &            &   0.30158 &  6.81416 &  7.80499 &         &         \\ \hline
 14 &   0.995519 &  0.303707 &  6.85484 &  7.85484 & 22.5706 & 25.8632 \\ 
    &            &  0.302346 &  6.82412 &  7.81964 &         &         \\ \hline
 15 &   0.997935 &  0.303378 &  6.84312 &  7.84312 & 22.5564 & 25.8526 \\ 
    &            &  0.302751 &  6.82899 &  7.82692 &         &         \\ \hline
 16 &   0.999119 &  0.303222 &  6.83725 &  7.83725 & 22.5487 & 25.8466 \\ 
    &            &  0.302955 &  6.83122 &  7.83034 &         &         \\ \hline
 17 &   0.999664 &  0.303152 &  6.83447 &  7.83447 & 22.5447 & 25.8434 \\ 
    &            &   0.30305 &  6.83218 &  7.83184 &         &         \\ \hline
 18 &   0.999891 &  0.303123 &  6.83328 &  7.83328 & 22.5429 & 25.8419 \\ 
    &            &   0.30309 &  6.83254 &  7.83243 &         &         \\ \hline
 19 &   0.999973 &  0.303113 &  6.83284 &  7.83284 & 22.5422 & 25.8413 \\ 
    &            &  0.303105 &  6.83266 &  7.83263 &         &         \\ \hline
 20 &   0.999996 &   0.30311 &  6.83271 &  7.83271 &  22.542 & 25.8411 \\ 
    &            &  0.303109 &  6.83269 &  7.83268 &         &         \\ \hline
 21 &        1.0 &   0.30311 &  6.83269 &  7.83269 &  22.542 & 25.8411 \\ 
    &            &   0.30311 &  6.83269 &  7.83269 &         &         \\ \hline
 22 &        1.0 &   0.30311 &  6.83269 &  7.83269 &  22.542 & 25.8411 \\ 
    &            &   0.30311 &  6.83269 &  7.83269 &         &         \\ \hline
 23 &        1.0 &   0.30311 &  6.83269 &  7.83269 &  22.542 & 25.8411 \\ 
    &            &   0.30311 &  6.83269 &  7.83269 &         &         \\ \hline
 24 &        1.0 &   0.30311 &  6.83269 &  7.83269 &  22.542 & 25.8411 \\ 
    &            &   0.30311 &  6.83269 &  7.83269 &         &         \\ \hline
 25 &        1.0 &   0.30311 &  6.83269 &  7.83269 &  22.542 & 25.8411 \\ 
    &            &   0.30311 &  6.83269 &  7.83269 &         &         \\ \hline
 26 &        1.0 &   0.30311 &  6.83269 &  7.83269 &  22.542 & 25.8411 \\ 
    &            &   0.30311 &  6.83269 &  7.83269 &         &         \\ \hline
\end{tabular}
\caption{Performance indicators, computed with the results in Theorem~\ref{theo}, for the ARPA network in Figure~\ref{fig:arpa} where the edges are subject to failures. The costs considered in this case are $\ccmp (i)=i$ and $\csys = 1$, i.e., a corrective maintenance is only marginally more expensive than a preventive maintenance.}
\label{tab:r_arpa2}
\end{table}

\section{Proofs}\label{sec:proofs}

In this section we show the proofs of the results in Sections~\ref{sec:model} and~\ref{sec:main}. We group the proofs according to each section.

\subsection{Proofs regarding the LFMO distribution}
\begin{proof}[Proof of Lemma~\ref{lemma1} in Section~\ref{sec:lifetimes}]
To prove 1., we start by noting that the LFMO distribution can alternatively be defined as the MO distribution in~\eqref{def:MO eq} with the rates $\lambda^{(n)}_{|V|}$ in~\eqref{def:LV}; see e.g.~\cite[(3.3), p.~103]{matthias2017simulating}.
Hence, by the memoryless property of exponential random variables, when there are $n-i$ components alive, the time until the arrival of the next shock that takes down a given subset of $j-i$ of these working components is distributed exponential with rate $\lambda^{(n-i)}_{j-i}$.
Furthermore, there are $\binom{n-i}{j-i}$ of these possible subsets of $j-i$ components of the $n-i$ working ones, so the time until the arrival of any of these $\binom{n-i}{j-i}$ shocks is $\binom{n-i}{j-i} \lambda^{(n-i)}_{j-i}$. We conclude by noting that $\binom{n-i}{j-i} \lambda_{j-i}^{(n-i)} = \sum_{k=i}^{j} (-1)^{j-k+1} \binom{n}{j} \binom{j}{k} \binom{k}{i} \Psi(n-k) / \binom{n}{i}$ for $0 \leq i < j \leq n$, by~\eqref{def:LV 2}.

To prove 2., it is sufficient to note that, by basic properties of Markov chains, $P_{i,j}$ is equal to the rate $\binom{n-i}{j-i} \lambda^{(n-i)}_{j-i}$ from Part~1., divided by the sum of all the outgoing rates from state $i$.
Defining $\Delta \Psi(m) := \Psi(m+1)-\Psi(m)$, it holds that the latter sum of outgoing rates is
\begin{align*}
\lefteqn{ \sum_{k=i+1}^n \binom{n-i}{k-i} \lambda^{(n-i)}_{k-i}
= \sum_{k=1}^{n-i} \binom{n-i}{k} \lambda^{(n-i)}_{k}} \\
& = \sum_{k=1}^{n-i} \binom{n-i}{k} \sum_{l=0}^{k-1} \binom{k-1}{l} (-1)^l \Delta\Psi(n-i-k+l) \\
& = \sum_{k=1}^{n-i} \sum_{l=0}^{k} \binom{n-i}{k} \binom{k}{l} (-1)^{l+1} \Psi(n-i-k+l) \\
& = \sum_{p = 0}^{n-i} \Psi(n-i-p) \sum_{q=\max(p,1)}^{n-i} (-1)^{q-p-1} \binom{n-i}{q} \binom{q}{p} \\
& = \Psi(n-i) \sum_{q=1}^{n-i} (-1)^{q-1} \binom{n-i}{q} - \sum_{p = 1}^{n-i} \Psi(n-i-p) \binom{n-i}{p} (1-1)^{n-i-p} \\
& = \Psi(n-i) - 0,
\end{align*}
which proves~\eqref{TP}.

For 3., \eqref{def:Q} comes from Part~2.~and basic properties of Markov chains, conditioning on the jump that takes the chain into the state $j$ or the set $\{j, \ldots, k\}$.

To prove 4., for the formula for $\Q{i}{j}$, note that due to the basic properties of Markov chains and the process starting from zero, $\Q{i}{j} = \sum_{l = 0}^\infty (P^l)_{0, i} P_{i, j}$. 
It follows that the only possible values for $l$ are $l=0, \ldots, i$, since for the chain $\Nd$ all possible transitions are from $i$ to $j$ with $i<j$.

For 5., for $r<k$, the event $\{ T_{r:n} < T_{k:n} \}$ corresponds to the cases when the chain $\Nd$ jumps from the set $\{0, \ldots, r-1\}$ to the set $\{r, \ldots, k-1\}$ before entering the set $\{k, \ldots, n\}$.
Noting the definition of $\QQ{i}{j}$ in 2., the probability $\P( T_{r:n} < T_{k:n} )$ is $\sum_{j=r}^{k-1} \QQ{r-1}{j} = \QQQ{r-1}{r}{k-1}$.
Similarly, the probability $\P( T_{r:n} = T_{k:n} )$ consists of the cases where the chain $\Nd$ jumps from the set $\{0, \ldots, r-1\}$ directly to the set $\{k, \ldots, n\}$, and this corresponds to the probability $\sum_{j=k}^n \QQ{r-1}{j} = \QQQ{r-1}{k}{n}$. Also, trivially, $\P( T_{r:n} < T_{k:n} )=0$ for $r \geq k$ and $\P( T_{k:n} = T_{k:n} )=1$.

Similarly, for~\eqref{PTkn eq j}, the event $\{ N(T_{k:n}) = j \}$ for $j \geq k$ corresponds to the case when the discrete chain of failed components $\Nd$ jumps from the set $\{0, \ldots, k-1\}$ to $j$, and this corresponds to $\QQ{k-1}{j}$. Lastly, the case $j<k$ clearly has a nil probability.
\end{proof}



\subsection{Proofs regarding system structure}

\begin{proof}[Proof of Proposition~\ref{prop:signature} in Section~\ref{sec:structure}]
First, out of the $n!$ sequences of $n$ components, the number of sequences where the system continues to work right after the $k$-th failure is $n! \overline S_{k} = k! (n-k)! \ \# \{\mathbf x \in \{0,1\}^n : \lvert \mathbf x \rvert=n-k, \ \Phi(\mathbf x)=1 \}$.
Indeed, for any state $\mathbf x$ in $\{0,1\}^n$ with $k$ failed components (there are $\# \{\mathbf x \in \{0,1\}^n : \lvert \mathbf x \rvert=n-k, \ \Phi(\mathbf x)=1 \}$ of these states), there are $k!$ ways of sequencing first the failed components, and then $(n-k)!$ ways of sequencing the working ones. Due to the monotonicity of the system, if $\Phi(\mathbf x)=1$ then in each of these sequences the system is working right after the $k$-th failure.
This proves the second statement of Proposition~\ref{prop:signature}.
For the first statement, again due to the monotonicity of the system, the sequence of sets $\{ \{\mathbf x \in \{0,1\}^n : \lvert \mathbf x \rvert=n-k, \ \Phi(\mathbf x)=1 \}: \ k=1,\ldots,n \}$ is decreasing (nested) as $k$ grows. Hence, $n! (\overline S_{k-1} - \overline S_{k})$ is the number of sequences that fail at the $k$-th failure.
\end{proof}

\subsection{Proofs regarding semi-coherent systems with LFMO distribution}

\begin{proof}[Proof of Lemma~\ref{lemma:samaniego} in Section~\ref{sec:coherent}]
Let $\mathbf \varepsilon$ in $\R^n$ be the vector of triggers of the Definition~\ref{def:LFMO eq} of the LFMO distribution, and denote by $\sigma(\mathbf \varepsilon)$ the permutation of the set $\{1, \ldots, n\}$ that makes $\varepsilon_{\sigma(1)} < \ldots < \varepsilon_{\sigma(n)}$. This permutation is well defined almost surely, as the triggers are i.i.d.~exponentially distributed, so with probability one there are no ties between the triggers, and there is a unique permutation that does this. Denote also
$\Sigma^{\phi,k} := \{ \text{permutation } \sigma \text{ of } \{1,\ldots,n\} :  \phi(\mathbf 0_{\sigma,1:l})=1 \ \forall l=1,\ldots, k-1 \text{ and } \phi(\mathbf 0_{\sigma,1:l})=0 \ \forall l=k,\ldots, n \}$,
where the vector $\mathbf 0_{\sigma,1:l}$ in $\{0,1\}^n$ is
$$(\mathbf 0_{\sigma,1:l})_{\sigma(i)} = \begin{cases}
0 & i=1,\ldots,l \\
1 & i=l+1,\ldots,n
\end{cases}$$
That is, $\Sigma^{\phi,k}$ is the set of sequences (seen as permutations) where, when we turn off the components of the system sequentially one by one following the order of the permutation, the first system failure occurs when we turn off the $k$-th component.
In this way, the sets $\Sigma^{\phi,1}, \ldots, \Sigma^{\phi,n}$ forms a partition of the set of all permutations of $n$ elements, and $\sigma(\mathbf \varepsilon)$ is in one (and only one) of these sets.
Next, note that $\Tsys = \sum_{k=1}^n T_{k:n} \I{\sigma(\mathbf \varepsilon) \in \Sigma^{\phi,k}}$ holds almost surely, since when $\sigma(\mathbf \varepsilon)$ is in $\Sigma^{\phi,k}$ the $k$-th trigger is the one that makes the system fail. It follows that $\P(\sigma(\varepsilon) \in \Sigma^{\phi,k}) = \# \Sigma^{\phi,k} / n! = s_k$ since the permutation $\sigma(\varepsilon)$ is distributed uniformly over the set of permutations of $n$ elements (because the triggers $\varepsilon_i$ are iid) and $s_k$ is the proportion of permutations that, when there are no simultaneous failures, the system fails at the $k$-th failure. With this, the vector $(\Tsys, \ T_{1:n}, \ \ldots, \ T_{n:n})$ is almost surely equal to $\sum_{k=1}^n (T_{k:n}, \ T_{1:n}, \ \ldots, \ T_{n:n}) \I{\sigma(\mathbf \varepsilon) \in \Sigma^{\phi,k}}$ which is equal in distribution to $(T_{k:n}, \ T_{1:n}, \ \ldots, \ T_{n:n})$ with probability $s_k$, for $k=1,\ldots,n$. 
\end{proof}

\subsection{Proof of the main theorem}


\begin{proof}[Proof of Theorem~\ref{theo} in Section~\ref{sec:main}]
For the proof, we repeatedly use the strong Markov property and claim that once the system reaches its first system failure time or its first repair time, since all components are completely repaired, the system behaves stochastically as a brand new system.

In the following, we formalize the latter argument.
Let $(\mathbf \chi(t) : t \geq 0)$ be the \emph{component status} process at each time instant, i.e., $\mathbf \chi(t) \in \{0,1\}^n$ and $\chi_i(t)=1$ iff component $i$ is working at time $t$.
Similarly, let $(\chi\sys(t) := \Phi(\mathbf \chi(t)) : t \geq 0)$ be the \emph{system status} process at each time instant; i.e., $\chi\sys(t)=1$ iff the system is working ($\Phi(\mathbf \chi(t))=1$) at time $t$.
Also, when operating under an $r$-out-of-$n$:R repair policy, denote by $\Trep\r[1]$, $\Trep\r[2]$, \ldots and $\Tsys\r[1]$, $\Tsys\r[2]$, \ldots the sequences of, respectively, times of repairs and of system failure.
Also, define $\Trep\r[0] := 0$ and $\varepsilon_i[0] := E_{i,0}$ for all $i=1, \ldots, n$, where $E_{i,m}$, $i=1,\ldots,n$ and $m \geq 0$, are iid standard exponential random variables.
Then inductively define for $m=0, 1, \ldots$
\begin{align*}
\chi_{i} [m] (t) & := \I{L_t \geq \varepsilon_i[m]} \qquad \text{for all } i=1, \ldots, n \\
\chi\sys[m] (t) & := \Phi(\mathbf\chi [m] (t)) \\
\Trep\r[m+1] & := \inf \left\{t > \Trep\r[m] \ : \ \sum_{i=1}^n (1-\chi_{i} [m] (t)) \geq r \ \text{or} \ \chi\sys[m] (t) =0 \right\} \\
\varepsilon_i[m+1] & := \begin{cases} L_{\Trep\r[m+1]} + E_{i,m+1} & \text{if } \chi_{i} [m] \left( \Trep\r[m+1] \right) = 0 \\ \varepsilon_i[m] & \text{otherwise, for all } i=1, \ldots, n \end{cases}
\end{align*}
where $\mathbf\chi [m](t)  = (\chi_1 [m](t), \ \ldots, \ \chi_n [m](t))$.
Similarly, $\Tsys\r[0] := 0$ and for $m \geq 0$,
\begin{align*}
\Tsys\r[m+1] := \min \left\{\Trep\r[l] \ : \ l \geq 1 , \ \Trep\r[l] > \Tsys\r[m] \text{ and } \qquad \qquad \qquad \right. \\
\left.  \chi\sys[l-1] \left( \Trep\r[l] \right) = 0 \right\}.
\end{align*}
In this way, $\chi_i(t) := \sum_{m \geq 0} \chi_i [m](t) \I{\Trep\r[m] \leq t < \Trep\r[m+1]}$ is the status of component $i$ at time $t$, and $$\chi\sys(t) := \sum_{m \geq 0} \chi\sys [m](t) \I{\Trep\r[m] \leq t < \Trep\r[m+1]}$$ is the corresponding system status.

Note that, by construction, the times $\Trep\r[m]$, $m\geq 1$, and $\Tsys\r[m]$, $m\geq 1$, are sequences of stopping times with respect to the filtration induced by the process $\left( (\mathbf \chi(t), \, \chi\sys(t)) : t \geq 0 \right)$ --- in fact, by the sub-filtration induced by the process $\left( (\sum_{i=1}^n (1-\chi_i(t)), \, \chi\sys(t)) : t \geq 0 \right)$ of number of failed components and system status.
It follows that the component status process $(\mathbf \chi(t) : t \geq 0)$ is renewed at each repair time $\Trep\r[m]$, $m\geq 1$, or also at each system failure time $\Tsys\r[m]$.
This holds due to the memoryless property of the exponential triggers and the strong Markov property of the L\'evy subordinator $\mathbf L$ in the definition~\eqref{def:LFMO} of the LFMO distribution, and because under the $r$-out-of-$n$:R policy \emph{all} failed components are \emph{completely} repaired at the repair times.

We now prove parts 1.~to 5.~of Theorem~\ref{theo}.

To prove 1., first recall that $\Tsys\r$ is the first system failure time when operating with the $r$-out-of-$n$:R policy, assuming that we start with all components working; and in the same context, $\Trep\r$ is the first repair time.
To analyze the event $\{\Trep\r = \Tsys\r\}$ it is sufficient to focus on the failure times without repair policy.
Denote by $\Tsys$ the system failure time when there is no repair policy, and in this case the first repair would occur at time $\min\{\Tsys, T_{r:n}\}$, where, recall, $T_{1:n} \leq \ldots \leq T_{n:n}$ are the ordered failure times of the components, $T_1, \ldots, T_n$.
Hence, we have $\P\left( \Trep\r = \Tsys\r \right) = \P\left( \min\{T_{r:n}, \Tsys\} = \Tsys \right)$. Recall now that the Samaniego signature decomposition~\eqref{eq:samaniego} states that $\Tsys$ is distributed as $T_{K:n}$, where $K$ is an independent discrete random variable that takes the value $k$ with probability $s_k$.
Therefore, we have that
$$\P\left( \min\{T_{r:n}, \Tsys\} = \Tsys \right) = \sum_{k=1}^n s_k \, \P\left( \min\{T_{r:n}, T_{k:n}\} = T_{k:n} \right).$$
We obtain~\eqref{def:pF} by noting the trivial facts that $\min\{T_{r:n}, T_{k:n}\} = T_{\mini{r}{k}:n}$ and $\P\left( T_{\mini{r}{k}:n} = T_{k:n} \right) = 1$ for $k\leq r$. Lastly,~\eqref{eq:1-pF} is a direct consequence of~\eqref{def:pF}.

To prove 2., let $j$ in $1, \ \ldots, \ n$ and again note that for the event $\{ N(\Trep\r) = j \}$ it is sufficient to focus on the event $\{ N(\min\{\Tsys, T_{r:n}\}) = j \}$ when there is no repair policy, since each time there is a repair or a system failure, the system behaves equal in probability to a new system with no failed components, due to the Markov property of the L\'evy subordinator $L$ and the memoryless property of the triggers $\varepsilon_1, \ \ldots, \ \varepsilon_n$. It follows that
\begin{align*}
\P\left( N(\Trep\r) = j \right) & = \P\left( N(\min\{\Tsys, T_{r:n}\}) = j \right) \\
& = \sum_{k=1}^n s_k \, \P\left( N(\min\{T_{k:n}, T_{r:n}\}) = j \right) \\
& = \sum_{k=1}^n s_k \, \P\left( N(T_{\mini{r}{k}:n}) = j \right).
\end{align*}
Lastly, note that at time $T_{\mini{r}{k}:n}$ there must be at least $\mini{r}{k}$ dead components. So, in particular if $j<r$ and $k>j$ (so $\mini{r}{k}>j$), we have $\P\left( N(T_{\mini{r}{k}:n}) = j \right) = 0$. This proves~\eqref{def:wr}.

Now we prove 3. We start by proving~\eqref{eq:PNTrep eq 1} and~\eqref{eq:PNTrep eq 2}. Recall that we assume $p\r = \P\left( \Trep\r = \Tsys\r \right) >0$. Consider any $j = 1, \ldots, n$, and note that
\begin{align*}
& \P\left( N(\Tsys\r) = j \right) \\
& = \sum_{l \geq 1} \P\left( N(\Trep\r(l)) = j \left| \Trep\r(l) = \Tsys\r \right.\right) \P \left(\Trep\r(l) = \Tsys\r \right) \\ 
& = \sum_{l \geq 1} \P\left( N(\Trep\r(1)) = j \left| \Trep\r(1) = \Tsys\r \right.\right) \P \left(\Trep\r(l) = \Tsys\r \right) \\
& = \P\left( N(\Trep\r) = j \left| \Trep\r = \Tsys\r \right.\right) 
\end{align*}
where $\Trep\r(l)$ is the $l$-th time of repair, and the second equality is due to the strong Markov property of the repair process. Indeed, after any time of repair, since all components are completely repaired, the process as a completely new process starting at zero. This proves~\eqref{eq:PNTrep eq 1}.
Next, note that 
\begin{align*}
& \P\left( N(\Trep\r) = j, \ \Trep\r = \Tsys\r \right) = \P\left( N(\min\{\Tsys, T_{r:n}\}) = j, \ \Tsys \leq T_{r:n} \right) \\
& = \sum_{k=1}^n s_k \, \P\left( N(\min\{T_{k:n}, T_{r:n}\}) = j, \ T_{k:n} \leq T_{r:n} \right) \\
& = \sum_{k=1}^j s_k \, \P\left( N(\min\{T_{k:n}, T_{r:n}\}) = j, \ T_{k:n} \leq T_{r:n} \right)
\end{align*}
where the last equation comes from the fact that $N(T_{k:n}) \geq k$, so for $k>j$ we have
$$\P\left( N(\min\{T_{k:n}, T_{r:n}\}) = j, \ T_{k:n} \leq T_{r:n} \right) \leq \P\left( N(T_{k:n}) = j \right) = 0.$$
It follows that for $k \leq j$ we have $\{ N(\min\{T_{k:n}, T_{r:n}\}) = j, \ T_{k:n} \leq T_{r:n} \} = \{ N(T_{\mini{r}{k}:n}) = j \}$. Indeed, the case $k \leq r$ is trivial as $T_{k:n} \leq T_{r:n}$ holds; and if $k>r$ then $\{ N(T_{\mini{r}{k}:n}) = j \} = \{ N(T_{r:n}) = j \}$ with $j>k$ implies that $T_{k:n} = T_{r:n}$. This proves~\eqref{eq:PNTrep eq 2}.

Lastly, for~\eqref{eq:PNTrep ineq}, we use $\P( N(\Trep\r) = j, \ \Trep\r < \Tsys\r ) = \P( N(\Trep\r) = j ) - \P( N(\Trep\r) = j, \ \Trep\r = \Tsys\r ) $ and conclude using~\eqref{def:wr} and~\eqref{eq:PNTrep eq 2}.

We now prove 4. We start by observing that, by Renewal Theory, see e.g.~\cite[Proposition~3.4.1]{resnick2013adventures}, it holds that 
\begin{align*}
\lim_{t \to \infty} \frac{C[0, t]}{t} = \frac{\E \, C[0,\Tsys\r]}{\E \, \Tsys\r} = \frac{\E \, C[0,\Trep\r]}{\E \, \Trep\r} \qquad \text{a.s.,}
\end{align*}
since the times $\Tsys\r$ and $\Trep\r$ are renewal times, due to the aforementioned Markov property. It follows that
\begin{align}
\E \, C[0,\Trep\r] = \E \left[ \csys \I{\Tsys\r=\Trep\r} + \sum_{j=1}^{n} \ccmp(j) \, \I{N(\Trep\r) = j} \right]
\label{eq:CTrep}
\end{align}
and
\begin{align}
\E \, \Trep\r = \E \left[ \min\{\Tsys, T_{r:n}, \} \right] = \sum_{k=1}^n s_k \, \E \left[ \min\{T_{k:n}, T_{r:n} \} \right] = \sum_{k=1}^n s_k \, \E \left[ T_{\mini{k}{r}:n} \right]. \label{eq:Trep}
\end{align}

Next, for $\E \, C[0,\Tsys\r]$ and $\E \, \Tsys\r$, we see that the process up to the first system failure time $\Tsys\r$ consists of a random number ---geometrically distributed on $0, 1, \ldots$ with parameter $1 - p\r = \P\left( \Trep\r < \Tsys\r \right)$--- of processes starting with all components working up to the first repair time $\Trep\r$ conditional on $\Trep\r < \Tsys\r$, concatenated with a last process starting with all components working, up to the first repair time $\Trep\r$ conditional on $\Trep\r = \Tsys\r$. In particular this implies that the system will fail almost surely.

\begin{align*}
& \E \, C[0,\Tsys\r] \\
& = \csys + \left( \frac{1}{p\r} - 1 \right) \sum_{j=1}^{n} \ccmp(j) \, \P\left( N(\Trep\r) = j \left|\, \Trep\r < \Tsys\r \right.\right) \\
& \qquad\qquad\qquad\qquad\qquad\qquad + \sum_{j=1}^{n} \ccmp(j) \, \P\left( N(\Trep\r) = j \left|\, \Trep\r = \Tsys\r \right.\right) \\
& = \csys + \sum_{j=1}^{n} \ccmp(j) \, \left( \left( \frac{1}{p\r} - 1 \right) \frac{1}{1-p\r} \sum_{k=j+1}^n s_k \, \P \left( N\left( T_{\mini{r}{k}:n} \right) = j \right) \I{r \leq j} \right. \\
& \qquad\qquad\qquad\qquad\qquad\qquad\qquad\qquad\qquad\qquad + \left. \frac{1}{p\r} \sum_{k = 1}^j s_k \, \P \left( N\left( T_{r:n} \right) = j \right) \right) \\
& = \csys + \frac{1}{p\r} \sum_{j=1}^{n} \ccmp(j) \, \left(  \sum_{k=j+1}^n s_k \, \P \left( N\left( T_{\mini{r}{k}:n} \right) = j \right) \I{r \leq j} \right. \\
& \qquad\qquad\qquad\qquad\qquad\qquad\qquad\qquad \left. + \sum_{k = 1}^j s_k \, \P \left( N\left( T_{\mini{r}{k}:n} \right) = j \right) \right) \\
& = \csys + \frac{1}{p\r} \sum_{j=1}^{n} \ccmp(j) \, \sum_{k = 1}^{j \I{r > j} + n \I{r \leq j}} s_k \, \P \left( N\left( T_{\mini{r}{k}:n} \right) = j \right) \\
& = \csys + \frac{1}{p\r} \sum_{j=1}^{n} \ccmp(j) \, \P \left( N(\Trep\r) = j \right)
\end{align*}
and
\begin{align*}
& \E \, \Tsys\r = \left( \frac{1}{p\r} - 1 \right) \E \, \left[ \Trep\r \left|\, \Trep\r < \Tsys\r \right.\right] + \E \, \left[ \Trep\r \left|\, \Trep\r = \Tsys\r \right.\right] \\
& = \frac{1}{p\r} \left( \E \, \left[ \Trep\r; \ \Trep\r < \Tsys\r\right] + \E \, \left[ \Trep\r; \ \Trep\r = \Tsys\r \right] \right) = \frac{1}{p\r} \E \, \Trep\r
\end{align*}

Lastly, to prove 5., again by Renewal Theory we have that $\lim_{t \to \infty} C[0, t] / t = \E \, C[0,\Trep\r] / \E \, \Trep\r$ almost surely, since the repair times are renewal times. We conclude using~\eqref{eq:CTrep} and~\eqref{eq:Trep}, since they also hold when $p\r=0$.
\end{proof}

\bibliographystyle{elsarticle-num} 
\bibliography{bibliography}

\end{document}